\newtheorem{theorem}{Theorem}[section]
\newtheorem{lemma}[theorem]{Lemma}
\newtheorem{proposition}[theorem]{Proposition}
\newtheorem{example}{Example}[section]
\newtheorem{definition}[theorem]{Definition}
\newtheorem{remark}{Remark}[section]
\newcommand{\cE}{\ensuremath{\mathcal E}}
\newcommand{\cH}{\ensuremath{\mathcal H}}
\newcommand{\cO}{\ensuremath{\mathcal O}}
\newcommand{\cR}{\ensuremath{\mathcal R}}
\newcommand{\cW}{\ensuremath{\mathcal W}}
\newcommand{\bbN}{{\ensuremath{\mathbb N}} }
\newcommand{\bbR}{{\ensuremath{\mathbb R}} }
\newcommand{\N}{\mathbb{N}}
\newcommand{\be}{\begin{equation}}
\newcommand{\ee}{\end{equation}}
\newcommand{\beq}{\begin{eqnarray}}
\newcommand{\eeq}{\end{eqnarray}}
\newcommand{\1}{{1} \hspace{-0.25 em}{\rm I}}
\newcommand{\Ne}{\N^{\ast}}
\newcommand{\HH}{{\cal H}}
\newcommand{\OO}{{\cal O}}
\newcommand{\R}{\mathbb{R}}
\newcommand{\ced}{\end{proof}}
\newcommand{\vphi}{\varphi}
\begin{document}
\begin{frontmatter}
\title{Maximum Principle for Quasilinear Stochastic PDEs with Obstacle  }
\date{}
\runtitle{}
\author{\fnms{Laurent}
 \snm{DENIS}\corref{}\ead[label=e1]{ldenis@univ-evry.fr}}
\thankstext{T1}{The work of the first and third author is supported by the chair \textit{risque de cr\'edit}, F\'ed\'eration bancaire Fran\c{c}aise}
\address{Universit\'e
d'Evry-Val-d'Essonne-FRANCE
\\\printead{e1}}
\author{\fnms{Anis}
 \snm{MATOUSSI}\corref{}\ead[label=e2]{anis.matoussi@univ-lemans.fr}}
\thankstext{t2}{The research of the second author was partially supported by the Chair {\it Financial Risks} of the {\it Risk Foundation} sponsored by Soci\'et\'e G\'en\'erale, the Chair {\it Derivatives of the Future} sponsored by the {F\'ed\'eration Bancaire Fran\c{c}aise}, and the Chair {\it Finance and Sustainable Development} sponsored by EDF and Calyon }
\address{
 LUNAM Université, Université du Maine - FRANCE \\\printead{e2}}

\author{\fnms{Jing}
 \snm{ZHANG}\corref{}\ead[label=e3]{jzhang@univ-evry.fr}}
\address{Universit\'e
d'Evry-Val-d'Essonne -FRANCE\\\printead{e3}}

\runauthor{L. Denis, A. Matoussi and J. Zhang}

\begin{abstract}
We prove a maximum principle for local solutions of quasilinear
stochastic PDEs with obstacle (in short OSPDE). The proofs are based
on a version of It\^o's formula and estimates for the positive part
of a local solution which is non-positive on the lateral boundary.

\end{abstract}

\begin{keyword}[class=AMS]
\kwd[Primary ]{60H15; 35R60; 31B150}
\end{keyword}

\begin{keyword}
\kwd{Stochastic PDE's, Obstacle problems, It\^o's formula,
$L^p-$estimate, Local solution, Comparison theorem, Maximum
principle, Moser iteration}
\end{keyword}
\end{frontmatter}

\section{Introduction}
In this paper, we consider an obstacle problem for the following
parabolic Stochastic PDE (SPDE in short)
\begin{equation}\label{SPDEO}\left\{ \begin{split}&du_t(x)=\partial_i  \left(a_{i,j}(x)\partial_ju_t(x)+g_i(t,x,u_t(x),\nabla
u_t(x))\right)dt+f(t,x,u_t(x),\nabla u_t(x))dt \\&\quad \quad\ \  \
\ \ \ +\sum_{j=1}^{+\infty}h_j(t,x,u_t(x),\nabla
u_t(x))dB^j_t +\nu (t,dx), \\
&u_t\geq S_t \, , \ \ \\ &u_0=\xi\, .\
\end{split}\right.\end{equation}
Here, $S$ is the given obstacle, $a$ is a matrix defining a symmetric operator on an open bounded domain $\cO$, $f,g,h$ are random coefficients.\\
In a recent work \cite{DMZ12} we have proved existence and uniqueness of the solution of equation (\ref{SPDEO}) under standard Lipschitz hypotheses  and $L^2$-type integrability conditions on the coefficients. Let us recall that the solution is  a couple $(u,\nu )$, where $u$ is a process with values in the first order Sobolev space and $\nu$ is a random regular measure forcing $u$ to stay above $S$ and satisfying a minimal Skohorod condition.\\
 In order to give a rigorous meaning to the notion of solution, inspired by the works of M. Pierre in the deterministic case (see \cite{Pierre,PIERRE}), we introduce the notion of parabolic capacity. The key point is that in \cite{DMZ12}, we construct a solution which admits a quasi continuous version hence defined outside a polar set and that regular measures which in general are not absolutely continuous w.r.t. the Lebesgue measure, do not charge polar sets.\\
 There is a huge literature on parabolic SPDE's without obstacle. The study of the $L^p-$norms w.r.t. the randomness of the space-time uniform norm on the trajectories of a stochastic PDE was started by N. V. Krylov in \cite{Krylov}, for a more complete overview of existing works on this subject see \cite{DMS09,DM11} and the references therein. Concerning the obstacle problem, there are two approaches, a probabilistic one (see \cite{MatoussiStoica, Klimsiak}) based on the Feynmann-Kac's formula via the backward doubly stochastic differential equations and the analytical one (see \cite{DonatiPardoux,NualartPardoux,XuZhang}) based on the Green function.\\

To our knowledge, up to now there is no maximum principle result for
quasilinear SPDE with obstacle and even very few results in the
deterministic case. The aim of this paper is to obtain, under
suitable integrability conditions on the coefficients,
$L^p$-estimates for the uniform norm (in time and space) of the
solution, a maximum principle for local solutions of equation
\eqref{SPDEO} and comparison theorems similar to those obtained in
the without obstacle case in \cite{DMS05,DMS09}.  This yields for
example the following result:

\begin{theorem}
Let $(M_t)_{t\geq 0}$ be an Itô process satisfying some
integrability conditions, $p\geq 2$ and $u$ be a local weak solution
of the obstacle problem (\ref{SPDEO}). Assume that $\partial\cO$ is
Lipschitz and $u\leq M$ on $\partial\cO$,
 then for all $t\in [0,T]$: $$ E\left\| \left( u-M\right)
^{+}\right\| _{\infty ,\infty ;t}^p\le k\left( p,t\right)
\mathcal{C}(S, f, g, h, M)$$ where $\mathcal{C}(S, f, g, h, M)$
depends only on the barrier $S$, the initial condition $\xi$,
coefficients $f,g,h$, the boundary condition  $ M$ and $k$ is a
function which only depends on $p$ and $t$, $\Vert \cdot \Vert
_{\infty ,\infty ;t}$ is the uniform norm on $[0,t]\times {\cal
O}$.\end{theorem}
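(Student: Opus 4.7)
The plan is to reduce the problem to an estimate for the positive part of $v := u - M$ on the parabolic cylinder, and then to run a stochastic Moser iteration on the spatial $L^p$ norms, analogous to the without-obstacle strategy of \cite{DMS05,DMS09}, with the obstacle term handled through its support property.

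First, since $M$ is an It\^o process, say $dM_t = b_t\,dt + \sum_j \sigma^j_t\, dB^j_t$, subtracting it from (\ref{SPDEO}) yields an OSPDE of the same type for $v$: the divergence part becomes $\partial_i\bigl(a_{i,j}\partial_j v + a_{i,j}\partial_j M + g_i\bigr)$, the drift $f - b$, the noise coefficient $h^j - \sigma^j$, the obstacle $S - M$, and the regular measure $\nu$ is unchanged. Crucially, the hypothesis $u \le M$ on $\partial\cO$ gives $v \le 0$ on the lateral boundary, so $v_t^+ \in H_0^1(\cO)$ for a.e.\ $t$; integration by parts on test functions involving $v^+$ will therefore have no boundary contribution. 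The additional terms generated by the characteristics $b,\sigma$ of $M$ are absorbed into the constant $\mathcal{C}(S,f,g,h,M)$ using the integrability assumptions on $M$.

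Second, I would apply the version of It\^o's formula from \cite{DMZ12} to $\|v_t^+\|_{L^2(\cO)}^p$ (and, in the Moser iteration, to higher $L^{2q}$ powers). The contribution from the obstacle measure reads $\int_0^t\!\!\int_\cO (v_s^+)^{p-1}\,\nu(ds,dx)$; since $\nu$ is supported on $\{u = S\}$, on that set $v^+ = (S-M)^+$, so this term is bounded by a quantity depending only on $(S-M)^+$ and the total mass of $\nu$ (itself controlled through the existence proof of \cite{DMZ12} by the data). This is the favorable sign feature of the obstacle for a maximum principle. The coercivity of $a_{i,j}$ together with the Lipschitz assumptions on $f,g,h$, Cauchy--Schwarz and Young inequalities, and BDG on the stochastic part, yield after taking expectations a Gronwall-type inequality for $\mathbb{E}\|v_t^+\|_{L^2}^p$ in which the right-hand side is controlled by $\mathcal{C}(S,f,g,h,M)$.

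Third, to upgrade from an $L^p$ spatial estimate to the $L^\infty_{t,x}$ bound $\mathbb{E}\|(u-M)^+\|_{\infty,\infty;t}^p$, I would iterate the above energy estimate with test functions of the form $(v^+)^{2q-1}$, $q = q_k \to \infty$ geometrically, and use the Sobolev embedding $H_0^1(\cO) \hookrightarrow L^{2^\ast}(\cO)$ to reinject the $L^{2q}$ norm into a higher power at the next step. Summing the resulting geometric series in $k$ after bounding the Moser constants produces the claimed $L^\infty$ estimate. The main obstacle is keeping track of these Moser constants at every stage so the series converges, and showing that the BDG contribution and the $\nu$-contribution at each stage can both be absorbed by the same uniform-in-$k$ bound from $\mathcal{C}(S,f,g,h,M)$; this is where the precise integrability hypotheses on $M$ and the barrier $S$ enter and determine the exact form of $k(p,t)$.
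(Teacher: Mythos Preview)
Your reduction to $v=u-M$ is correct and matches the paper, but the handling of the obstacle term in the Moser iteration has a genuine gap. At level $l$ the measure contribution is $l\int_0^t\!\int_\cO (v_s^+)^{l-1}\,\nu(dxds)$, which on the support of $\nu$ equals $l\int ((S-M)^+)^{l-1}\,d\nu$. This is \emph{nonnegative}, not nonpositive: there is no reason $S\le M$. Your proposal to bound it by $\|(S-M)^+\|_\infty^{l-1}$ times the total mass of $\nu$ fails for two reasons. First, for a \emph{local} solution $\nu$ is only a local regular measure and its total mass need not be finite, let alone in $L^{p}(\Omega)$. Second, even if it were, this extra additive term at every Moser level would have to be tracked through the iteration with a factor growing like $\|(S-M)^+\|_\infty^{l}$, and the domination machinery used to pass from level $l$ to level $\sigma l$ gives no mechanism to absorb such a term uniformly.

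The paper avoids this by \emph{not} running Moser on $(u-M)^+$ directly. Instead it introduces an auxiliary \emph{global} solution $(y,\nu')=\mathcal{R}(\xi^+,f+f^{0,-},g,h,S)$ with null Dirichlet boundary, proves a comparison theorem (using the It\^o formula for the positive part, but only at the quadratic level $\varphi(x)=x^2$) to get $u\le y$, and then applies Moser iteration to $y-S'$ where $S'$ is the SPDE supersolution dominating $S$ from Assumption~(O). The point is that on $\{y=S\}$ one has $S-S'\le 0$, so the $\nu'$-term $l\int \mathrm{sgn}(S-S')|S-S'|^{l-1}d\nu'$ is $\le 0$ at every level and can simply be dropped. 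The subtraction of $S'$, not $M$, is what produces the favorable sign; your scheme never invokes $S'$ and therefore lacks this cancellation.
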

Let us remark that in order to get such a result, we define the notion of { local solutions} to the obstacle problem \eqref{SPDEO} and so introduce what we call {\it local regular measures}. \\

The paper is organized as follows: in section 2 we introduce
notations and hypotheses. In section 3, we establish the
$L^p-$estimate for uniform norm of the solution with null Dirichlet
boundary condition. Section 4 is devoted to the main result: the
maximum principle for local solutions whose proof is based on an
It\^o formula satisfied by the positive part of any local solution
with lateral boundary condition, $M$. The last section is an
Appendix in which we give  the proofs of several
lemmas.\section{Preliminaries}
\subsection{$L^{p,q}-$space}
Let $\mathcal{O}\subset \bbR^d$ be an open bounded domain  and
$L^2(\mathcal{O})$ the set of square integrable functions with
respect to the Lebesgue measure on $\mathcal{O}$, it is an Hilbert
space equipped with the usual scalar product and norm as follows
$$(u,v)=\int_\mathcal{O}u(x)v(x)dx,\qquad\parallel u\parallel=(\int_\mathcal{O}u^2(x)dx)^{1/2}.$$
In general, we shall extend the notation
$$(u,v)=\int_\mathcal{O}u(x)v(x)dx,$$ where $u,\ v$ are measurable
functions defined on $\cO$ such that $uv\in L^1(\cO)$.
\\The first order Sobolev space of functions vanishing at the boundary will be denoted by $H_0^1(\cO)$,
its natural scalar product and norm are $$
\left( u,v\right) _{H_0^1\left( {\cal O}\right) }=\left( u,v\right) +\int_{%
{\cal O}}\sum_{i=1}^d\left( \partial _iu\left( x\right) \right)
\left(
\partial _iv\left( x\right) \right) dx,\;\left\| u\right\| _{H_0^1\left(
{\cal O}\right) }=\left( \left\| u\right\| _2^2+\left\| \nabla
u\right\| _2^2\right) ^{\frac 12}. $$As usual we shall denote
$H^{-1}(\cO)$ its dual space. \\We shall denote  by  $ H_{loc}^1
(\cO)$ the space of functions which are locally square integrable in
$ \mathcal{O}$ and which admit first order derivatives that are also
locally square integrable.

For each $t>0$ and for all real numbers $p,\,q\geq 1$, we denote by $%
L^{p,q}([0,t]\times {\cal O})$ the space of (classes of) measurable
functions $u:[0,t]\times {\cal O}\longrightarrow \mathbb{{R}}$ such
that
$$
\Vert u\Vert _{p,q;\,t}:=\left( \int_0^t\left( \int_{{\cal O}%
}|u(s,x)|^p\,dx\right) ^{q/p}\,ds\right) ^{1/q} $$ is finite. The
limiting cases with $p$ or $q$ taking the value $\infty $ are also
considered with the use of the essential sup norm.
\\Now we introduce some other spaces of functions and discuss a certain duality between them. Like in \cite{DMS05} and \cite{DMS09},
for self-containeness, we recall the following definitions:
\\Let $(p_1,q_1)$, $(p_2,q_2)$ $\in[1,\infty]^2$ be fixed and set $$ I=I\left( p_1,q_1,p_2,q_2\right) :=\left\{ \left( p,q\right)
\in \left[ 1,\infty \right] ^2/\;\exists \;\rho \in \left[
0,1\right] s.t.\right. $$ $$ \left. \frac 1p=\rho \frac
1{p_1}+\left( 1-\rho \right) \frac 1{p_2},\frac 1q=\rho \frac
1{q_1}+\left( 1-\rho \right) \frac 1{q_2}\right\} . $$ This means
that the set of inverse pairs $\left( \frac 1p,\frac 1q\right) ,$
$(p,q)$
 belonging to $I,$ is a segment contained in the square $%
\left[ 0,1\right] ^2,$ with the extremities $\left( \frac
1{p_1},\frac 1{q_1}\right) $ and $\left( \frac 1{p_2},\frac
1{q_2}\right) .$ \\We introduce: $$ L_{I;t}=\bigcap_{\left(
p,q\right) \in I}L^{p,q}\left( \left[ 0,t\right] \times {\cal
O}\right) . $$ We know that this space coincides with the
intersection of the extreme spaces,
$$ L_{I;t}=L^{p_1,q_1}\left( \left[ 0,t\right] \times {\cal
O}\right) \cap L^{p_2,q_2}\left( \left[ 0,t\right] \times {\cal
O}\right) $$ and that it is a Banach space with the following norm$$
\left\| u\right\| _{I;t}:=\left\| u\right\| _{p_1,q_1;t}\vee \left\|
u\right\| _{p_2,q_2;t}. $$
The other space of interest is the algebraic sum%
$$ L^{I;t}:=\sum_{\left( p,q\right) \in I}L^{p,q}\left( \left[
0,t\right] \times {\cal O}\right) , $$ which represents the vector
space generated by the same family of spaces.
This is a normed vector space with the norm%
$$ \left\| u\right\|^{I;t} :=\,\inf \left\{ \sum_{i=1}^n\left\|
u_i\right\| _{p_i,q_i;\,t}\,/\;u=\sum_{i=1}^nu_i,u_i\in
L^{p_i,q_i}\left( \left[ 0,t\right] \times {\cal O}\right) ,\,
\left( p_i,q_i\right) \in I,\,i=1,...n;\,n\in
\mathbb{{N}}^{*}\right\} . $$
Clearly one has $L^{I;t}\subset L^{1,1}\left( \left[ 0,t\right] \times {\cal %
O}\right) $ and $\left\| u\right\| _{1,1;t}\le c\left\| u\right\|
^{I;t},$ for each $u\in L^{I;t},$ with a certain constant $c>0.$

We also remark that if $\left( p,q\right) \in I,$ then the conjugate pair $%
\left( p^{\prime },q^{\prime }\right) ,$ with $\frac 1p+\frac
1{p^{\prime }}=\frac 1q+\frac 1{q^{\prime }}=1,$ belongs to another
set, $I^{\prime },$
of the same type. This set may be described by%
$$ I^{\prime }=I^{\prime }\left( p_1,q_1,p_2,q_2\right) :=\left\{
\left( p^{\prime },q^{\prime }\right) /\;\exists \left( p,q\right)
\in I\;s.t.\;\frac 1p+\frac 1{p^{\prime }}=\frac 1q+\frac
1{q^{\prime }}=1\right\} $$ and it is not difficult to check that
$I^{\prime }\left( p_1,q_1,p_2,q_2\right) =I\left( p_1^{\prime
},q_1^{\prime },p_2^{\prime },q_2^{\prime }\right) ,$ where
$p_1^{\prime },q_1^{\prime },p_2^{\prime }$ and $q_2^{\prime }$ are
defined by $\frac 1{p_1}+\frac 1{p_1^{\prime }}=\frac 1{q_1}+\frac
1{q_1^{\prime }}=\frac 1{p_2}+\frac 1{p_2^{\prime }}=\frac
1{q_2}+\frac 1{q_2^{\prime }}=1.$

Moreover, by H\"older's inequality, it follows that one has
\begin{equation}
\label{dual}\int_0^t\int_{{\cal O}}u\left( s,x\right) v\left(
s,x\right) dxds\le \left\| u\right\| _{I;t}\left\| v\right\|
^{I^{\prime };t},
\end{equation}
for any $u\in L_{I;t}$ and $v\in L^{I^{\prime };t}.$ This inequality
shows
that the scalar product of $L^2\left( \left[ 0,t\right] \times {\cal O}%
\right) $ extends to a duality relation for the spaces $L_{I;t}$ and
$L^{I^{\prime };t}.$

Now let us recall that the Sobolev inequality states that%
\begin{equation}\label{Sobolev}
\left\| u\right\| _{2^{*}}\le c_S\left\| \nabla u\right\| _2,
\end{equation}
for each $u\in H_0^1\left( {\cal O}\right) ,$ where $c_S>0$ is a
constant
that depends on the dimension and $2^{*}=\frac{2d}{d-2}$ if $d>2,$ while $%
2^{*}$ may be any number in $]2,\infty [$ if $d=2$ and $2^{*}=\infty $ if $%
d=1.$ Therefore one has%
$$ \left\| u\right\| _{2^{*},2;t}\le c_S\left\| \nabla u\right\|
_{2,2;t}, $$ for each $t\ge 0$ and each $u\in L_{loc}^2\left(
\mathbb{R}_{+};H_0^1\left( {\cal O}\right) \right) .$ If $u\in
L_{loc}^{\infty}\left( \mathbb{R}_{+}; L^2\left( {\cal O}\right) \,
\right) \bigcap L^2_{loc} \left( \mathbb{R}_+;  H_0^1\left( {\cal
O}\right) \right),$ one has
$$\left\| u\right\| _{2,\infty ;t}\vee \left\| u\right\|
_{2^{*},2;t}\le c_1\left( \left\| u\right\| _{2,\infty ;t}^2+\left\|
\nabla u\right\| _{2,2;t}^2\right) ^{\frac 12},$$with $c_1=c_S\vee
1.$


For $d\ge 3$ and some parameter $\theta \in [0,1[$ we set:
$$
\Gamma_\theta =\left\{ \left( p,q\right) \in  \left[ 1,\infty
\right]^2, \,  \frac d{2p}+\frac 1q=\frac d2+\theta \right\} ,
$$

$$ \Gamma _\theta ^{*}=\left\{ \left( p,q\right) \in \left[
1,\infty \right] ^2/\;\frac d{2p}+\frac 1q=1-\theta \right\} , $$
$$ L_\theta ^{*}=\sum_{\left( p,q\right) \in \Gamma _\theta
^{*}}L^{p,q}\left( \left[ 0,t\right] \times {\cal O}\right) $$ $$
\left\| u\right\| _{\theta ;t}^{*}:=\,\inf \left\{
\sum_{i=1}^n\left\| u_i\right\|
_{p_i,q_i;\,t}\,/\;u=\sum_{i=1}^nu_i,u_i\in L^{p_i,q_i}\left( \left[
0,t\right] \times {\cal O}\right) ,\right. $$ $$
\left. \left( p_i,q_i\right) \in \Gamma _\theta ^{*},\,i=1,...n;\,n\in {\bf N%
}^{*}\right\} . $$ If $d=1,2.$ we put
$$
\Gamma _\theta =\left\{ \left( p,q\right) \in \left[ 1,\infty \right] ^2/\;%
\frac{2^{*}}{2^{*}-2}\frac 1p+\frac 1q=\frac{2^{*}}{2^{*}-2}+\theta
\right\} , $$ $$ \Gamma _\theta ^{*}=\left\{ \left( p,q\right) \in
\left[ 1,\infty \right] ^2/\;\frac{2^{*}}{2^{*}-2}\frac 1p+\frac
1q=1-\theta \right\} $$
and by using similar calculations with the convention $%
\frac{2^{*}}{2^{*}-2}=1$ if $d=1.$

We remark that $%
\Gamma _\theta ^{*}=I\left( \infty ,\frac 1{1-\theta },\frac
d{2\left( 1-\theta \right) },\infty \right) $ and that the norm
$\left\| u\right\| _{\theta ;t}^{*}$ coincides with $\left\|
u\right\| ^{\Gamma _\theta ^{*};t}=\left\| u\right\| ^{I\left(
\infty ,\frac 1{1-\theta },\frac d{2\left( 1-\theta \right) },\infty
\right) ;t}.$ Moreover we have the following duality relation:
\begin{equation}\label{dual2}\int_0^t\int_{{\cal O}}u\left( s,x\right) v\left( s,x\right)
dxds\le \left\| u\right\| _{\theta;t}\left\| v\right\|_{\theta;t}^*,
\end{equation} for any $u\in L_{\theta;t}$ and $v\in L_{\theta;t}^*$ and the following inequality:
\begin{equation}\label{controltheta}
\left\|u\right\|_{\theta;t}\leq
c_1\left(\left\|u\right\|^2_{2,\infty;t}+\left\|\nabla
u\right\|^2_{2,2;t}\right)^{1/2}.
\end{equation}

\subsection{Hypotheses}
We consider a sequence $((B^i(t))_{t\geq0})_{i\in\mathbb{N}^*}$ of
independent Brownian motions defined on a standard filtered
probability space $(\Omega,\mathcal{F},(\mathcal{F}_t)_{t\geq0},P)$
satisfying the usual conditions.

Let $A$ be a symmetric second order differential operator defined on
the open bounded subset $ \cO \subset \R^d$, with domain
$\mathcal{D}(A)$, given by
$$A:=-L=-\sum_{i,j=1}^d\partial_i(a^{i,j}\partial_j).$$ We assume that
$a=(a^{i,j})_{i,j}$ is a measurable symmetric matrix defined on
$\mathcal{O}$ which satisfies the uniform ellipticity
condition$$\lambda|\xi|^2\leq\sum_{i,j=1}^d
a^{i,j}(x)\xi^i\xi^j\leq\Lambda|\xi|^2,\ \forall x\in\mathcal{O},\
\xi\in \R^d,$$where $\lambda$ and $\Lambda$ are positive constants.
The energy associated with the matrix $a$ will be denoted by
\begin{equation}
\label{energy}
 \mathcal{E} \left( w,v\right)=\sum_{i,j=1}^d
\int_{\cO}a^{i,j}(x)\partial_i w(x)\partial_j v(x)\, dx .
\end{equation} It's defined for functions $w,\, v \in  H^1_{0}
(\mathcal{O} )$, or for $w \in H^1_{loc} (\mathcal{O} )$ and $v \in
H^1_{0} (\mathcal{O })$ with compact support.

We assume that we have predictable random
functions\begin{eqnarray*}&&f:\R_+\times\Omega\times\mathcal{O}\times
\R\times \R^d\rightarrow
\R,\\&&g=(g_1,...,g_d):\R_+\times\Omega\times\mathcal{O}\times
\R\times \R^d\rightarrow
\R^d,\\&&h=(h_1,...,h_i,...):\R_+\times\Omega\times\mathcal{O}\times
\R\times \R^d\rightarrow \R^{\mathbb{N}^*}.\end{eqnarray*} We define
\begin{equation*}
\begin{split}
 &f ( \cdot
,\cdot,\cdot, 0,0):=f^0, \  g( \cdot,\cdot,\cdot ,0,0) :=g^0 =
(g_1^0,...,g_d^0) \ \mbox{and}
\  h( \cdot,\cdot,\cdot ,0,0) :=h^0 = (h_1^0,...,h_i^0,...) .\\
\end{split}
\end{equation*}
In the sequel, $|\cdot|$ will always denote the underlying Euclidean
or $l^2-$norm. For
example$$|h(t,\omega,x,y,z)|^2=\sum_{i=1}^{+\infty}|h_i(t,\omega,x,y,z)|^2.$$
\begin{remark} Let us note that this general setting of the SPDE \eqref{SPDEO} we consider, encompasses the case of an SPDE driven by a space-time noise, colored in space and white in time  as in \cite{SW} for example (see also Example 1 in \cite{DMZ12}).

\end{remark}
\textbf{Assumption (H):} There exist non-negative constants $C,\
\alpha,\ \beta$ such that for almost all $\omega$, the following
inequalities hold for all
$(x,y,z,t)\in\mathcal{O}\times\mathbb{R}\times\mathbb{R}^d\times\mathbb{R}_+$:\begin{enumerate}
\item   $|f(t,\omega,x,y,z)-f(t,\omega,x,y',z')|\leq C(|y-y'|+|z-z'|),$
\item $|g(t,\omega,x,y,z)-g(t,\omega,x,y',z')|\leq
C|y-y'|+\alpha|z-z'|,$
\item $|h(t,\omega,x,y,z)-h(t,\omega,x,y',z')|\leq
C|y-y'|+\beta|z-z'|,$
\item the contraction property: $2\alpha+\beta^2<2\lambda.$
\end{enumerate}

Moreover we  introduce some integrability conditions  on the
coefficients $f^0, \;
g^0, \, h^0$ and the initial data $\xi$. Along this article, we fix a terminal time $T>0$. \\

 \textbf{Assumption (HI2)}   $$  E \left(\|\xi \|_2^2   +\left\| f^0\right\|_{2,2;t}^2+\left\|
\left|g^0\right|\right\| _{2,2;t}^2+\left\|\left| h^0\right|\right\|
_{2,2;t}^2\right) <\infty , $$ for each $t\in[0,T]$.
\\[0.2cm]
\textbf{Assumption (HIL)} $$  E \int_K |\xi (x)|^2 dx + E \,
\int_{0}^t \int_K \big( |f_s^0(x)|^2 + |g_s^0 (x)|^2 + |h_s^0(x)|^2
\, \big) dx ds < \infty,
$$for any compact set $K \subset \mathcal{O}$ and for any $t\in[0,T]$.

\subsection{Weak solutions}
We now introduce $\cH_T$, the space of
  $H_0^1(\mathcal{O})$-valued
  predictable processes $(u_t)_{t \in[0,T]}$ such that
\[
\left( E \sup_{0\leq s\leq T} \left\| u_{s}\right\|_2
^{2}+\int_{0}^{T}E\, \mathcal{E}\left( u_{s}\right) ds\right)
^{1/2}\;< \; \infty \;.
\]
We define $\HH_{loc}=\HH_{loc}(\mathcal{O})$ to be the set of
$H^1_{loc} (\OO )$-valued predictable processes defined on $[0,T]$
such that for any compact subset $K$ in $\OO$:

\[
\left( E \sup_{0\leq s\leq T}\int_K u_{s}(x)^{2}\, dx
+E\int_{0}^{T}\int_K |\nabla u_{s}(x)|^2\, dx ds\right) ^{1/2}\;< \;
\infty .
\]

The space of test functions is the algebraic tensor product
$\mathcal{D}=\mathcal{C} _{c}^{\infty }(\bbR^+)\otimes
\mathcal{C}_c^2 (\cO )$, where $\mathcal{C} _{c}^{\infty }(\bbR^+)$
denotes the space of all real infinite differentiable  functions
with compact support in $\mathbb{R}^+$ and $\mathcal{C}_c^2 (\cO )$
the set of $C^2$-functions with compact support in $\cO$.

\vspace{0.5cm}

Now we recall the definition of the regular measure which has been defined in \cite{DMZ12}.\\
$\mathcal{K}$ denotes $L^\infty([0,T];L^2(\mathcal{O}))\cap
L^2([0,T];H_0^1(\mathcal{O}))$ equipped with the norm:
\begin{eqnarray*}\parallel
v\parallel^2_\mathcal{K}&=&\parallel
v\parallel^2_{L^\infty([0,T];L^2(\mathcal{O}))}+\parallel
v\parallel^2_{L^2([0,T];H_0^1(\mathcal{O}))}\\
&=&\sup_{t\in[0,T[}\parallel v_t\parallel^2 +\int_0^T \left(
\parallel v_t \parallel^2  +\mathcal{E}(v_t)\right)\, dt
.\end{eqnarray*} $\mathcal{C}$ denotes the space of continuous
functions with compact support in $[0,T[\times\mathcal{O}$ and
finally:
$$\mathcal {W}=\{\varphi\in L^2([0,T];H_0^1(\mathcal{O}));\ \frac{\partial\varphi}{\partial t}\in
L^2([0,T];H^{-1}(\mathcal{O}))\}, $$ endowed with the
norm$\parallel\varphi\parallel^2_{\mathcal {W}}=\parallel
\varphi\parallel^2_{L^2([0,T];H_0^1(\mathcal{O}))}+\parallel\displaystyle\frac{\partial
\varphi}{\partial t}\parallel^2_{L^2([0,T];H^{-1}(\mathcal{O}))}$. \\
It is known (see \cite{LionsMagenes}) that $\mathcal{W}$ is
continuously embedded in $C([0,T]; L^2 (\cO))$, the set of $L^2 (\cO
)$-valued continuous functions on $[0,T]$. So without ambiguity, we
will also consider
$\mathcal{W}_T=\{\varphi\in\mathcal{W};\varphi(T)=0\}$,
$\mathcal{W}^+=\{\varphi\in\mathcal{W};\varphi\geq0\}$,
$\mathcal{W}_T^+=\mathcal{W}_T\cap\mathcal{W}^+$.
\begin{definition}
An element $v\in \mathcal{K}$ is said to be a {\bf parabolic
potential} if it satisfies:
$$ \forall\varphi\in\mathcal{W}_T^+,\
\int_0^T-(\frac{\partial\varphi_t}{\partial
t},v_t)dt+\int_0^T\mathcal{E}(\varphi_t,v_t)dt\geq0.$$ We denote by
$\mathcal{P}$ the set of all parabolic potentials.
\end{definition}
The next representation property is  crucial:
\begin{proposition}(Proposition 1.1 in \cite{PIERRE})\label{presentation}
Let $v\in\mathcal{P}$, then there exists a unique positive Radon
measure on $[0,T[\times\mathcal{O}$, denoted by $\nu^v$, such that:
$$\forall\varphi\in\mathcal{W}_T\cap\mathcal{C},\ \int_0^T(-\frac{\partial\varphi_t}{\partial t},v_t)dt+\int_0^T\mathcal{E}(\varphi_t,v_t)dt=\int_0^T\int_\mathcal{O}\varphi(t,x)d\nu^v.$$
Moreover, $v$ admits a right-continuous (resp. left-continuous)
version $\hat{v} \ (\makebox{resp. } \bar{v}): [0,T]\mapsto L^2
(\cO)$ .\\
Such a Radon measure, $\nu^v$ is called {\bf a regular measure} and
we write:
$$ \nu^v =\frac{\partial v}{\partial t}+Av .$$
\end{proposition}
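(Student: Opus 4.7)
The plan is to obtain $\nu^v$ as a positive linear functional represented via the Riesz--Markov theorem, and then to derive the right- and left-continuous modifications of $v$ from the measure-valued evolution equation $\partial_t v + Av = \nu^v$. I would first introduce
\[
L(\varphi) := \int_0^T\Bigl(-\tfrac{\partial \varphi_t}{\partial t},\, v_t\Bigr)\,dt + \int_0^T \mathcal{E}(\varphi_t, v_t)\,dt,\qquad \varphi \in \mathcal{W}_T \cap \mathcal{C}.
\]
Positivity of $L$ on the cone $(\mathcal{W}_T \cap \mathcal{C})^+$ is exactly the defining property of a parabolic potential, while the Cauchy--Schwarz inequality together with the upper ellipticity bound on $a$ yields a continuity estimate $|L(\varphi)| \leq (1+\Lambda)\,\|v\|_{\mathcal{K}}\,\|\varphi\|_{\mathcal{W}}$.

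Next, for any $\psi \in \mathcal{C}^+_c([0,T[\times\mathcal{O})$, I would regularize by convolution with a smooth space-time mollifier. Because the support of $\psi$ stays away from $\{T\}\cup\partial\mathcal{O}$, the regularizations $\varphi_\varepsilon$ lie in $\mathcal{D} \subset \mathcal{W}_T\cap\mathcal{C}$ and converge uniformly to $\psi$. Positivity of $L$ together with domination of the $\varphi_\varepsilon$ by a fixed nonnegative element of $\mathcal{W}_T\cap\mathcal{C}$ (a product of a smooth cutoff in $t$ and a smooth cutoff in $x$) shows, by a Dini-type argument, that $L(\varphi_\varepsilon)$ has a limit depending only on $\psi$, which defines a positive linear functional $\tilde L$ on $\mathcal{C}_c([0,T[\times\mathcal{O})$. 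Riesz--Markov then delivers a unique positive Radon measure $\nu^v$ such that $\tilde L(\psi) = \int_0^T\int_{\mathcal{O}}\psi\,d\nu^v$; uniqueness on $\mathcal{W}_T\cap\mathcal{C}$ is immediate because this subspace is dense in $\mathcal{C}_c([0,T[\times\mathcal{O})$.

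For the one-sided continuous modifications, the identity $\partial_t v + Av = \nu^v$ holds in the distributional sense on $(0,T)\times\mathcal{O}$, with $Av \in L^2([0,T]; H^{-1}(\mathcal{O}))$ and $\nu^v \geq 0$. Testing against $\chi\,\eta$ with $\chi \in \mathcal{C}_c^\infty(\mathcal{O})$ and $\eta \in H_0^1(\mathcal{O})$, the scalar function $t \mapsto (v_t,\chi\eta)$ decomposes as an absolutely continuous term (from $Av$) plus a nondecreasing BV term (from the positive measure $\chi\eta\,\nu^v$). Consequently one-sided limits at every $t$ exist; by the separability of $L^2(\mathcal{O})$ one selects a single right-continuous (resp.\ left-continuous) version $\hat v$ (resp.\ $\bar v$) valid for every test function simultaneously.

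The main obstacle is the extension step from $\mathcal{W}_T\cap\mathcal{C}$ to $\mathcal{C}_c([0,T[\times\mathcal{O})$: although $\varphi_\varepsilon \to \psi$ uniformly and $\nabla\varphi_\varepsilon$ is controlled on the support of $\psi$, the time derivatives $\partial_t\varphi_\varepsilon$ generally blow up in $L^2([0,T];H^{-1})$, so the quantitative bound $|L(\varphi)| \leq (1+\Lambda)\|v\|_{\mathcal{K}}\|\varphi\|_{\mathcal{W}}$ alone is insufficient. It is the positivity of $L$ that rescues the argument by forcing monotone convergence on the regularizing nets, and this is precisely the place where the parabolic-potential hypothesis on $v$ is indispensable.
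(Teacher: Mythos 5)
This proposition is not proved in the paper: it is quoted verbatim from Pierre (Proposition~1.1 in \cite{PIERRE}), so there is no internal argument to compare with. Judged on its own terms, your proof follows the correct high-level strategy---represent the positive functional
\[
L(\varphi)=\int_0^T\Bigl(-\tfrac{\partial\varphi_t}{\partial t},v_t\Bigr)\,dt+\int_0^T\mathcal{E}(\varphi_t,v_t)\,dt
\]
by a Radon measure via Riesz--Markov, and then read off one-sided modifications in $L^2(\mathcal{O})$ from the equation $\partial_t v + Av = \nu^v$---but the extension step is phrased imprecisely, and the last step is only sketched where it is actually delicate.

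On the extension step, the appeal to a ``Dini-type argument'' and ``monotone convergence on the regularizing nets'' is a detour. The mollified family $\varphi_\varepsilon$ converges to $\psi$ uniformly already, so Dini has no work to do; nor do convolution regularisations form a monotone net. The clean route, which you nearly state, is the standard positivity--domination estimate: fix a compact $K\subset[0,T[\times\mathcal{O}$ and choose $\chi_K\in(\mathcal{W}_T\cap\mathcal{C})^+$ (e.g.\ a product of smooth cutoffs in $t$ and $x$) with $\chi_K\geq 1$ on $K$. For every $\varphi\in\mathcal{W}_T\cap\mathcal{C}$ supported in $K$ with $\|\varphi\|_\infty\leq 1$, both $\chi_K\pm\varphi$ lie in $(\mathcal{W}_T\cap\mathcal{C})^+$, so positivity gives $|L(\varphi)|\leq L(\chi_K)$. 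This is exactly the local boundedness in sup-norm that Riesz--Markov needs, and it renders the quantitative bound $|L(\varphi)|\lesssim\|v\|_{\mathcal{K}}\|\varphi\|_{\mathcal{W}}$ irrelevant. Once $L$ is seen to be sup-norm continuous on each $\mathcal{C}_c(K)$, it extends by density of $\mathcal{D}$ to a positive linear functional on $\mathcal{C}_c([0,T[\times\mathcal{O})$ and the measure is produced; uniqueness follows from the same density. To conclude that the identity also holds for a general $\varphi\in\mathcal{W}_T\cap\mathcal{C}$ you should approximate $\varphi$ in the $\mathcal{W}$-norm (to pass to the limit in $L$) and in sup-norm (to pass to the limit in $\int\varphi\,d\nu^v$) simultaneously, which the tensor product $\mathcal{D}$ permits.

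On the one-sided versions, the scalar argument is right: for fixed $\eta\in\mathcal{C}_c^\infty(\mathcal{O})$, the function $t\mapsto(v_t,\eta)$ is the sum of an absolutely continuous term (from $Av\in L^2([0,T];H^{-1})$) and a nondecreasing BV term (from $\eta\,d\nu^v$), hence has one-sided limits at every $t$. But going from these scalar limits to right- and left-continuous $L^2(\mathcal{O})$-valued versions is where the real work lies, and you compress it into one sentence. You need to run the argument over a countable dense set of test functions, invoke the uniform bound $\sup_{t}\|v_t\|_{L^2}<\infty$ (from $v\in\mathcal{K}$) to upgrade the scalar one-sided limits to weak $L^2$-limits, and then use the energy bound to upgrade weak to strong one-sided limits. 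As written, the proof asserts the conclusion without carrying out this upgrade.

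In short: correct blueprint and correct identification that positivity (i.e.\ the parabolic-potential hypothesis) is the indispensable ingredient, but the extension mechanism should be the $\chi_K\pm\varphi$ trick rather than a Dini/monotone-convergence argument, and the construction of the $L^2$-valued one-sided versions needs to be spelled out.
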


\begin{definition}
Let $K\subset [0,T[\times\mathcal{O}$ be compact, $v\in\mathcal{P}$
is said to be  \textit{$\nu-$superior} than 1 on $K$, if there
exists a sequence $v_n\in\mathcal{P}$ with $v_n\geq1\ a.e.$ on a
neighborhood of $K$ converging to $v$ in
$L^2([0,T];H_0^1(\mathcal{O}))$.
\end{definition}
We denote:$$\mathscr{S}_K=\{v\in\mathcal{P};\ v\ is\ \nu-superior\
to\ 1\ on\ K\}.$$
\begin{proposition}(Proposition 2.1 in \cite{PIERRE})
Let $K\subset [0,T[\times\mathcal{O}$ compact, then $\mathscr{S}_K$
admits a smallest $v_K\in\mathcal{P}$ and the measure $\nu^v_K$
whose support is in $K$ satisfies
$$\int_0^T\int_\mathcal{O}d\nu^v_K=\inf_{v\in\mathcal{P}}\{\int_0^T\int_\mathcal{O}d\nu^v;\ v\in\mathscr{S}_K\}.$$
\end{proposition}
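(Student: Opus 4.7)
The plan is to exploit the lattice structure of the convex cone $\mathcal{P}$: the key structural input is that if $v_1,v_2\in\mathcal{P}$ then $v_1\wedge v_2\in\mathcal{P}$. This is classical in the framework of \cite{PIERRE} and is checked by testing the defining variational inequality of Proposition \ref{presentation} against $\varphi\in\mathcal{W}_T^+$ and using the $H^1$ chain-rule for the minimum after truncation/approximation. A first easy corollary is that $\mathscr{S}_K$ is stable under $\wedge$: if $v_n^{(i)}\to v_i$ in $L^2([0,T];H^1_0(\mathcal{O}))$ with $v_n^{(i)}\in\mathcal{P}$ and $v_n^{(i)}\ge 1$ on a neighborhood $U_i$ of $K$, then $v_n^{(1)}\wedge v_n^{(2)}\in\mathcal{P}$ dominates $1$ on $U_1\cap U_2$ and converges to $v_1\wedge v_2$ by the $1$-Lipschitz continuity of $\wedge$. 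Non-emptiness of $\mathscr{S}_K$ itself is immediate: pick $\psi\in\mathcal{D}$ with $\psi\ge 2$ on a neighborhood of $K$ and solve the parabolic obstacle problem with obstacle $\psi$.

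I would next build $v_K$ by downward directedness. By a standard measure-theoretic argument, extract a countable family $(w_n)\subset\mathscr{S}_K$ whose pointwise infimum realizes the essential infimum of the whole family; replacing $w_n$ by $w_1\wedge\cdots\wedge w_n\in\mathscr{S}_K$ (using the $\wedge$-stability above) we may assume the sequence is decreasing. A uniform energy bound on $(w_n)$ in $L^2([0,T];H^1_0(\mathcal{O}))$ (arising from the variational inequality together with the domination $0\le w_n\le w_1$) and weak compactness identify the pointwise limit $v_K:=\inf_n w_n$ as an element of $\mathcal{P}$; a diagonal argument on the approximating sequences of each $w_n$ places $v_K\in\mathscr{S}_K$. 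By construction $v_K=\mathrm{essinf}\,\mathscr{S}_K$, so $v_K\le w$ a.e.\ for every $w\in\mathscr{S}_K$, giving the required minimality.

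Two facts about $\nu^{v_K}$ remain. The inclusion $\mathrm{supp}(\nu^{v_K})\subset K$ follows by contradiction from minimality: if $\nu^{v_K}$ charged an open $U\subset\subset([0,T[\times\mathcal{O})\setminus K$, one could replace $v_K$ on $U$ by the linear parabolic solution $w$ of $\partial_t w+Aw=0$ with parabolic boundary data inherited from $v_K$; by the maximum principle this produces a strictly smaller element of $\mathcal{P}$ still dominating $1$ near $K$, contradicting the minimality of $v_K$. For the total-mass statement I would appeal to a duality argument: construct $\varphi\in\mathcal{W}_T$ with $0\le\varphi\le 1$, $\varphi\equiv 1$ on a neighborhood of $K$, and $-\partial_t\varphi+A\varphi\ge 0$ distributionally---a ``backward capacitary'' potential obtained by running the symmetric construction for the adjoint parabolic operator on $K$. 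Then for any $v\in\mathscr{S}_K$,
\[
\int_0^T\!\!\int_{\mathcal{O}}d\nu^v\;\ge\;\int_0^T\!\!\int_{\mathcal{O}}\varphi\,d\nu^v=\int_0^T\bigl(-\partial_t\varphi+A\varphi,\,v\bigr)\,dt\;\ge\;\int_0^T\bigl(-\partial_t\varphi+A\varphi,\,v_K\bigr)\,dt=\int_0^T\!\!\int_{\mathcal{O}}d\nu^{v_K},
\]
the first inequality because $0\le\varphi\le 1$ and $\nu^v\ge 0$, the second because $v\ge v_K\ge 0$ together with the sign of $-\partial_t\varphi+A\varphi$, and the last equality because $\varphi\equiv 1$ on $\mathrm{supp}(\nu^{v_K})\subset K$.

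I expect the principal obstacles to be two: first, the careful $H^1$ truncation underlying the lattice property $v_1\wedge v_2\in\mathcal{P}$ together with the closedness of $\mathcal{P}$ under decreasing monotone convergence; second, the construction of the backward test function $\varphi$ with the prescribed sign, which amounts to running the symmetric capacitary argument for the adjoint parabolic operator. Once these two structural points are in place, the minimality of $v_K$ and the support/mass properties of $\nu^{v_K}$ reduce to routine applications of the parabolic maximum principle and the duality identity above.
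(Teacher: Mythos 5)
The paper gives no proof of this Proposition --- it is imported verbatim from Pierre \cite{PIERRE} --- so your argument can only be judged on its own merits. Your blueprint (the cone $\mathcal{P}$ is stable under $\wedge$, extract a decreasing sequence $(w_n)\subset\mathscr{S}_K$ realizing the essential infimum, pass to the limit $v_K$, then get the mass identity by duality against a backward equilibrium potential) is coherent in outline, but the critical analytic step is asserted rather than proved. By the definition in the paper, $v_K\in\mathscr{S}_K$ requires an approximating sequence in $\mathcal{P}$, each term $\ge 1$ near $K$, converging to $v_K$ \emph{strongly} in $L^2([0,T];H_0^1(\mathcal{O}))$; your diagonal argument therefore needs $w_n\to v_K$ strongly in $L^2H_0^1$, while weak compactness only delivers weak convergence and hence $v_K\in\mathcal{P}$. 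Upgrading a decreasing, energy-bounded sequence of parabolic potentials to a strong-$L^2H_0^1$ Cauchy sequence is a genuine lemma (the parabolic analogue of the classical energy-Cauchy argument for decreasing superharmonic functions), and it is exactly where the time derivative makes things non-symmetric: the estimate needs the pairing $\int\!\!\int v\,d\nu^w$ together with an integration by parts in $t$ that presupposes a regularization $v^\lambda\in\mathcal{W}$. Likewise your ``uniform energy bound arising from $0\le w_n\le w_1$'' is not automatic for $\int_0^T\mathcal{E}(w_n)\,dt$; it must be derived from the same machinery. As it stands this is a gap, not a routine verification.

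The second structural problem is the mass identity. Your test function $\varphi\in\mathcal{W}_T$ with $0\le\varphi\le1$, $\varphi\equiv1$ near $K$, $-\partial_t\varphi+A\varphi\ge0$ is precisely the equilibrium potential of $K$ for the adjoint (time-reversed) parabolic operator --- i.e.\ the object whose existence is the very Proposition you are proving, applied with $t\mapsto T-t$. The argument is therefore either circular or silently reruns the entire construction once more for the adjoint operator; in Pierre's framework one avoids this by working directly with a minimizing sequence for the total-mass functional and its weak lower semicontinuity on $\mathcal{P}$, which yields the smallest element and the infimum of masses simultaneously without invoking the backward operator. Two smaller omissions: the representation formula of Proposition \ref{presentation} is stated for $\varphi\in\mathcal{W}_T\cap\mathcal{C}$ and you need its extension to general $\varphi\in\mathcal{W}_T$ (available since $\nu^v$ has finite mass, but to be quoted); and in the support argument, balayaging $v_K$ on $U$ produces an element of $\mathcal{P}$ dominated by $v_K$, but placing it back in $\mathscr{S}_K$ requires balayaging an entire approximating sequence, since elements of $\mathscr{S}_K$ are not themselves required to majorize $1$ near $K$.
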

\begin{definition}(Parabolic Capacity)\begin{itemize}
                                        \item Let $K\subset [0,T[\times\mathcal{O}$ be compact, we define
$cap(K)=\int_0^T\int_\mathcal{O}d\nu^v_K$;
                                        \item let $O\subset
[0,T[\times\mathcal{O}$ be open, we define $cap(O)=\sup\{cap(K);\
K\subset O\ compact\}$;
                                        \item   for any borelian
$E\subset [0,T[\times\mathcal{O}$, we define $cap(E)=\inf\{cap(O);\
O\supset E\ open\}$.
                                      \end{itemize}

\end{definition}
\begin{definition}A property is said to hold quasi-everywhere (in short q.e.)
if it holds outside a set of null capacity.
\end{definition}
\begin{definition}(Quasi-continuous)

\noindent A function $u:[0,T[\times\mathcal{O}\rightarrow\mathbb{R}$
 is called quasi-continuous, if there exists a decreasing sequence of open
subsets $O_n$ of $[0,T[\times\mathcal{O}$ with: \begin{enumerate}
                        \item for all $n$, the restriction of $u_n$ to the complement of $O_n$ is
continuous;
                                       \item $\lim_{n\rightarrow+\infty}cap\;(O_n)=0$.
                                     \end{enumerate}
We say that $u$ admits a quasi-continuous version, if there exists
$\tilde{u}$ quasi-continuous  such that $\tilde{u}=u\ a.e.$
\end{definition}
The next proposition, whose proof may be found in \cite{Pierre} or
\cite{PIERRE} shall play an important role in the sequel:
\begin{proposition}\label{Versiont} Let $K\subset \cO$ a compact set, then $\forall t\in [0,T[$
$$cap (\{ t\}\times K)=\lambda_d (K),$$
where $\lambda_d$ is the Lebesgue measure on $\cO$.\\
As a consequence, if $u: [0,T[\times \cO\rightarrow \R$ is a map
defined quasi-everywhere then it defines uniquely a map from $[0,T[$
into $L^2 (\cO)$. In other words, for any $t\in [0,T[$, $u_t$ is
defined without any ambiguity as an element in $L^2 (\cO)$.
Moreover, if $u\in \mathcal{P}$, it admits  version $\bar{u}$ which
is left continuous on $[0,T]$ with values in $L^2 (\cO )$ so that
$u_T =\bar{u}_{T^-}$ is also defined without ambiguity.
\end{proposition}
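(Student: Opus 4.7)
I would prove the identity $\mathrm{cap}(\{t\}\times K)=\lambda_d(K)$ by matching inequalities, and then deduce the two consequences. For the upper bound, fix $\varepsilon>0$, choose an open set $U$ with $K\subset U\subset\cO$ and $\lambda_d(U)\le\lambda_d(K)+\varepsilon$, and a continuous cutoff $\chi\in H_0^1(\cO)$ with $\chi\ge 1+\varepsilon$ on $U$ and $\int_{\cO}\chi\,dx\le\lambda_d(K)+C\varepsilon$. For $t>0$ and $n$ large (with $1/n<t$), set
\[
v_n(s,x):=\ind_{\{s\ge t-1/n\}}\,P_{s-t+1/n}\chi(x),
\]
where $(P_r)_{r\ge 0}$ is the Dirichlet heat semigroup associated with $-A$ on $\cO$. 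A direct integration by parts shows $v_n\in\cP$ with $\nu^{v_n}=\chi(x)\,\delta_{t-1/n}\otimes dx$ (the upward jump at $s=t-1/n$ being its only contribution) and total mass $\int_{\cO}\chi\,dx$. Strong continuity of $P_r$ at $r=0^+$ combined with the strict bound $\chi\ge 1+\varepsilon$ on $U$ gives $v_n\ge 1$ on a space-time neighborhood of $\{t\}\times K$ for $n$ large, so $v_n\in\mathscr{S}_{\{t\}\times K}$. Letting $\varepsilon\to 0$ yields $\mathrm{cap}(\{t\}\times K)\le\lambda_d(K)$ (the case $t=0$ is handled analogously by $v(s,x)=P_s\chi(x)$, whose regular measure reduces to $\chi(x)\delta_0\otimes dx$).

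For the lower bound, let $v_K\in\mathscr{S}_{\{t\}\times K}$ be the minimizer from Proposition~2.1 of \cite{PIERRE} and $\bar v_K$ its left-continuous version (Proposition~\ref{presentation}). Since $v_K$ is the $L^2([0,T];H_0^1(\cO))$-limit of potentials $\ge 1$ on neighborhoods of $\{t\}\times K$, passing to a subsequence and using left-continuity gives $\bar v_K(t,\cdot)\ge 1$ a.e.\ on $K$. The Duhamel representation for a parabolic potential reads $\bar v_K(t,x)=\int_{[0,t)\times\cO}G(t-r;x,y)\,d\nu^{v_K}(r,y)$, with $G$ the symmetric Dirichlet heat kernel of $-A$ and $\nu^{v_K}$ including the initial contribution $v_K(0)\delta_0\otimes dx$. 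Integrating over $K$, Fubini and the sub-Markov bound $P_{t-r}\ind_K\le 1$ on $\cO$ then give
\[
\lambda_d(K)\le\int_K \bar v_K(t,x)\,dx=\int_{[0,t)\times\cO}(P_{t-r}\ind_K)(y)\,d\nu^{v_K}(r,y)\le\nu^{v_K}([0,T)\times\cO)=\mathrm{cap}(\{t\}\times K).
\]

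With the identity in hand the two consequences are immediate: any polar set $N\subset[0,T[\times\cO$ has Lebesgue-negligible horizontal slice $\{x:(t,x)\in N\}$ at each $t\in[0,T[$, so two q.e.-equal maps agree $dx$-a.e.\ on every slice and $u_t\in L^2(\cO)$ is unambiguous; for $u\in\cP$, the left-continuous version $\bar u:[0,T]\to L^2(\cO)$ from Proposition~\ref{presentation} furnishes the left limit $\bar u_{T^-}\in L^2(\cO)$, defining $u_T$ without ambiguity. The main obstacle is the upper-bound construction: the potential must be $\ge 1$ on a full space-time neighborhood of $\{t\}\times K$ (forcing the slack $\chi\ge 1+\varepsilon$ to absorb the pointwise decrease $P_r\chi<\chi$ produced by the Dirichlet heat flow) while its regular measure keeps mass within $O(\varepsilon)$ of $\lambda_d(K)$; reconciling these competing constraints and passing to the limit $\varepsilon\to 0$ is the heart of Pierre's argument.
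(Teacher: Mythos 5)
The paper does not actually prove Proposition \ref{Versiont}: it explicitly states that \emph{``the next proposition, whose proof may be found in [Pierre] or [PIERRE]''} and then uses it as a black box. So there is no in-text proof to compare yours against; your proposal must be judged as a reconstruction of Pierre's argument.

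Your upper bound is in the right spirit: explicit potentials built from a cutoff $\chi$ close to $\mathbf{1}_K$ and propagated by the Dirichlet heat semigroup, with the jump placed slightly before $t$ so that by strong continuity of $P_r$ the potential still exceeds $1$ on a space-time neighbourhood of $\{t\}\times K$, while the total Radon mass stays within $O(\varepsilon)$ of $\lambda_d(K)$. This is essentially Pierre's construction and is sound, modulo the usual checks that $\chi$ can be chosen with $\int_\cO\chi\le\lambda_d(K)+C\varepsilon$ and that $P_r\chi\to\chi$ locally uniformly (true for continuous $\chi$ compactly supported in $\cO$).

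The lower bound, however, contains a genuine error. The minimizer $v_K\in\mathscr S_{\{t\}\times K}$ has $\nu^{v_K}$ supported in $\{t\}\times K$ (Pierre's Proposition 2.1), hence $v_K$ solves the free heat equation on $[0,t)$ with $v_K(0)=0$, so $v_K\equiv 0$ for $s<t$. The \emph{left}-continuous version $\bar v_K$ at $t$ is therefore $\bar v_K(t,\cdot)=v_K(t^-,\cdot)=0$, not $\ge 1$ on $K$; and your Duhamel integral $\int_{[0,t)\times\cO}G(t-r;x,y)\,d\nu^{v_K}(r,y)$ is also identically $0$ because it integrates over $r<t$ and misses the single time slice $r=t$ where $\nu^{v_K}$ lives. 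The quantity you actually need is the \emph{right}-continuous version $\hat v_K(t,\cdot)$, the value just after the jump, and the standard way to extract the lower bound is not via the Green kernel but by integrating the equation $\partial_t v_K + A v_K = \nu^{v_K}$ in space: since $A$ is in divergence form and $v_K(\cdot,t')\in H_0^1(\cO)$, $\int_\cO A v_K\,dx=0$, so for $t'>t$ one gets $\int_\cO v_K(t',x)\,dx = \nu^{v_K}([0,t']\times\cO)$, and letting $t'\downarrow t$ yields $\int_\cO \hat v_K(t,x)\,dx = \operatorname{cap}(\{t\}\times K)$. The remaining (and nontrivial) step — showing that $\hat v_K(t,\cdot)\ge 1$ a.e.\ on $K$ from the approximating sequence in the definition of $\mathscr S_{\{t\}\times K}$ — is exactly where Pierre invests the technical work with the precise/quasi-continuous representative; your appeal to ``passing to a subsequence'' does not cover this, because the neighbourhoods $O_j$ on which $v^{(j)}\ge 1$ may shrink to the null slice $\{t\}\times K$ as $j\to\infty$. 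The deduction of the two consequences at the end is fine.
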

\begin{remark} The previous proposition applies if for example $u$ is quasi-continuous.
\end{remark}
To establish a maximum principle for local solutions we need to
define the notion of {\it local regular measures}:
\begin{definition}
We say that a Radon measure $\nu$ on $[0,T[\times \cO$ is a {local
regular measure} if for any  non-negative $\phi$  in
$\mathcal{C}_c^\infty(\cO)$,  $\phi\nu$ is a regular measure.
\end{definition}
\begin{proposition}
Local regular measures do not charge polar sets (i.e. sets of
capacity 0).
\end{proposition}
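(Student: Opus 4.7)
The plan is to reduce the statement to the analogous (and known) fact for genuine regular measures, via a compactly supported cutoff in the spatial variable. The idea is that by definition, multiplying a local regular measure by any $\phi \in \mathcal{C}_c^\infty(\cO)$, $\phi \geq 0$, produces a regular measure, and regular measures are already known not to charge polar sets (this is the foundational property of $\nu^v$-type measures in M.\ Pierre's framework, already emphasized in the introduction of the paper and invoked in \cite{DMZ12}).

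First I would fix a polar set $E \subset [0,T[\times \cO$, i.e.\ a Borel set with $\mathrm{cap}(E)=0$. I would then choose an exhausting sequence of compact sets $K_n \subset \cO$ with $K_n \subset \mathrm{int}(K_{n+1})$ and $\bigcup_n K_n = \cO$, and pick $\phi_n \in \mathcal{C}_c^\infty(\cO)$ with $0 \leq \phi_n \leq 1$, $\phi_n \equiv 1$ on $K_n$, so that $\phi_n \uparrow 1$ pointwise on $\cO$. By the very definition of a local regular measure, each $\phi_n \nu$ is a regular measure in the sense of Proposition \ref{presentation}.

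Next, I would invoke the known fact that regular measures do not charge polar sets to conclude $(\phi_n \nu)(E) = 0$ for every $n$. Since $\phi_n \uparrow 1$ and $\nu$ is a positive Radon measure on $[0,T[\times \cO$, the monotone convergence theorem gives
$$
\nu(E) \;=\; \lim_{n\to\infty} \int \mathbf{1}_E \, \phi_n \, d\nu \;=\; \lim_{n\to\infty} (\phi_n \nu)(E) \;=\; 0,
$$
which is the desired conclusion.

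The only non-trivial ingredient is the classical statement that a regular measure $\nu^v$ (with $v\in \mathcal{P}$) does not charge polar sets; this is precisely the property that motivated the introduction of parabolic capacity in \cite{Pierre,PIERRE} and is used repeatedly in \cite{DMZ12}. Everything else is a routine localization argument, so I do not expect any genuine obstacle; the main care is simply to ensure that the cutoffs $\phi_n$ are chosen independent of time so that multiplication by $\phi_n$ preserves the structure of a regular measure without affecting the set $E$ in the time direction.
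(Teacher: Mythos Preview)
Your proposal is correct and follows essentially the same route as the paper: both arguments pick a sequence $\phi_n\in\mathcal{C}_c^\infty(\cO)$ with $0\le\phi_n\le1$ converging to $1$ on $\cO$, use that each $\phi_n\nu$ is a regular measure (hence does not charge the polar set), and pass to the limit. The only cosmetic difference is that the paper invokes Fatou's lemma with an arbitrary such sequence, whereas you build an increasing sequence and use monotone convergence.
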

\begin{proof}
Let $A$ be a polar set and consider a sequence $(\phi_n)$ in
$\mathcal{C}_c^\infty(\cO)$, $0\leq\phi_n\leq1$, converging to 1
everywhere on $\cO$. By Fatou's lemma,
\begin{equation*}
0\leq\int_{[0,T[\times\cO}\1_A
d\nu(x,t)\leq\liminf_{n\rightarrow\infty}\int_{[0,T[\times\cO}\1_A\phi_nd\nu(x,t)=0.
\end{equation*}
\end{proof}

We end this part by a convergence lemma which plays an important
role in our approach (Lemma 3.8 in \cite{PIERRE}):
\begin{lemma}\label{convergemeas}
If $v^n\in\mathcal{P}$ is a bounded sequence in $\mathcal{K}$ and
converges weakly to $v$ in $L^2([0,T];H_0^1(\mathcal{O}))$; if $u$
is a quasi-continuous function and $|u|$ is bounded by a element in
$\mathcal{P}$. Then
$$\lim_{n\rightarrow+\infty}\int_0^T\int_\mathcal{O}ud\nu^{v^n}=\int_0^T\int_\mathcal{O}ud\nu^{v}.$$
\end{lemma}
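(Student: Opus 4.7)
The plan is to establish the convergence in two stages: first for continuous test functions in $\mathcal{W}_T\cap\mathcal{C}$, where it is a direct consequence of the representation identity for regular measures combined with the weak convergence hypothesis; and then to extend to general quasi-continuous $u$ dominated by an element of $\mathcal{P}$ via a capacity-based approximation that exploits the uniform $\mathcal{K}$-boundedness of the sequence $v^n$.

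For the first step, let $\varphi\in\mathcal{W}_T\cap\mathcal{C}$. By Proposition \ref{presentation},
$$\int_0^T\int_{\mathcal{O}}\varphi\,d\nu^{v^n}=-\int_0^T\left\langle \frac{\partial \varphi_t}{\partial t},v^n_t\right\rangle\,dt+\int_0^T\mathcal{E}(\varphi_t,v^n_t)\,dt,$$
and the same identity holds for $v$. The right-hand side is a continuous linear functional of its second argument for the topology of $L^2([0,T];H^1_0(\mathcal{O}))$: the first term by the $H^{-1}$--$H^1_0$ duality since $\partial_t\varphi\in L^2([0,T];H^{-1}(\mathcal{O}))$, the second by boundedness of $\mathcal{E}$ on $H^1_0(\mathcal{O})^2$ via the ellipticity constants. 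The weak convergence $v^n\rightharpoonup v$ in $L^2([0,T];H^1_0(\mathcal{O}))$ then yields the claim for every such $\varphi$.

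For the second step, by linearity and a truncation argument one may assume $0\le u\le w$ for some $w\in\mathcal{P}$. By the quasi-continuity of $u$ there is a decreasing sequence of open sets $(O_m)$ with $\mathrm{cap}(O_m)\to 0$ such that $u$ is continuous on $([0,T[\times\mathcal{O})\setminus O_m$. Using Tietze-type extension inside a compact set exhausting $[0,T[\times\mathcal{O}$ modulo $O_m$, I would produce a sequence $\varphi_m\in\mathcal{W}_T\cap\mathcal{C}$ with $|\varphi_m|\le\|w\|_\infty$-type bounds and $\varphi_m=u$ off $O_m$, so that Step~1 applies to each $\varphi_m$. The conclusion then follows from the $3\varepsilon$ decomposition
$$\left|\int u\,d\nu^{v^n}-\int u\,d\nu^{v}\right|\le\int_{O_m}|u-\varphi_m|\,d\nu^{v^n}+\int_{O_m}|u-\varphi_m|\,d\nu^{v}+\left|\int \varphi_m\,d\nu^{v^n}-\int \varphi_m\,d\nu^v\right|,$$
by first letting $m\to\infty$ to control the first two terms uniformly in $n$, then $n\to\infty$ for the last.

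The main obstacle is the uniform tightness estimate that makes the first two terms above small uniformly in $n$. Concretely, I would prove a bound of the type
$$\nu^{v^n}(O_m)\le C(w)\,\mathrm{cap}(O_m)^{1/2}$$
with $C(w)$ independent of $n$. The route is to invoke the variational characterization of capacity through the smallest potential $v_{O_m}\in\mathcal{P}$ whose associated measure is supported in $\overline{O_m}$ and has total mass $\mathrm{cap}(O_m)$, and then to exploit the bilinear inequality $\int v_{O_m}\,d\nu^{v^n}\le c\,\|v_{O_m}\|_\mathcal{K}\,\|v^n\|_\mathcal{K}$ that arises from the representation in Proposition \ref{presentation} together with the ellipticity of $\mathcal{E}$; boundedness of $v^n$ in $\mathcal{K}$ then closes the argument. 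Since regular and local regular measures do not charge polar sets, this uniform tightness across the capacity-small sets $O_m$ is exactly what is needed to pass from continuous to quasi-continuous test functions.
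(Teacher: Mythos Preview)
The paper does not give its own proof of this lemma: it is stated with the attribution ``Lemma 3.8 in \cite{PIERRE}'' and no argument is supplied. So there is nothing in the paper to compare your proposal against; your task is effectively to reconstruct Pierre's proof.

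Your two-stage outline is the natural one, and Step~1 is fine. Step~2, however, has real gaps. First, you write ``$|\varphi_m|\le\|w\|_\infty$-type bounds'', but $w\in\mathcal{P}\subset\mathcal{K}=L^\infty([0,T];L^2)\cap L^2([0,T];H^1_0)$ need not be bounded in $L^\infty$, so no such bound is available; the domination $|u|\le w$ is precisely what must replace pointwise boundedness, and the error term on $O_m$ has to be controlled through $\int_{O_m} w\,d\nu^{v^n}$, not through $\nu^{v^n}(O_m)$ times a sup-norm. Second, Tietze extension produces a continuous function, not an element of $\mathcal{W}_T\cap\mathcal{C}$, which requires $H^1_0$-regularity in space and $\partial_t\varphi\in L^2([0,T];H^{-1})$; you would need a mollification or density argument on top of the extension, and it is not clear your approximants stay dominated in a way that lets you invoke the regular-measure integrals. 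Third, the bilinear bound $\int v_{O_m}\,d\nu^{v^n}\le c\|v_{O_m}\|_{\mathcal K}\|v^n\|_{\mathcal K}$ cannot be read off Proposition~\ref{presentation} directly, because that identity requires the test function to lie in $\mathcal{W}_T\cap\mathcal{C}$, while $v_{O_m}\in\mathcal{P}$ has no reason to have $\partial_t v_{O_m}\in L^2([0,T];H^{-1})$; one needs instead the symmetric energy identity for two parabolic potentials (this is in \cite{PIERRE}), and the relevant smallness comes from $\|v_{O_m}\|_{\mathcal K}$ being controlled by $\mathrm{cap}(O_m)$.

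In short: the architecture is right, but the three technical ingredients you flag as ``would prove'' or ``would produce'' are exactly where the work lies, and as written each of them is either false or unjustified. If you want to carry this through, look at how Pierre handles the pairing $\int \hat w\,d\nu^{v}$ for $v,w\in\mathcal{P}$ and how the capacitary potential controls integrals against regular measures.
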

\vspace{0.5cm}

We now give the assumptions on the obstacle that we shall need in the different cases that we shall consider.\\

\textbf{Assumption (O):} The obstacle $S: [0,T]\times \Omega\times
\cO\rightarrow \R$ is  an adapted random field  almost surely
quasi-continuous, in the sense that for $P$-almost all
$\omega\in\Omega$, the map $(t,x)\rightarrow S_t (\omega,x)$ is
quasi-continuous.  Moreover,  $S_0 \leq \xi$ $P$-almost surely and
$S$ is controlled by the solution of an SPDE, i.e. $\forall
t\in[0,T],$
\begin{equation}S_t\leq S'_t,\quad dP\otimes dt\otimes dx-a.e.\end{equation} where
$S'$ is the solution of the linear SPDE
\begin{equation}\left\{\begin{array}{ccl} \label{obstacle}
 dS'_t&=&LS'_tdt+f'_tdt+\sum_{i=1}^d \partial_i g'_{i,t}dt+\sum_{j=1}^{+\infty}h'_{j,t}dB^j_t\\
                  S'(0)&=&S'_0 ,
\end{array}\right. \end{equation}
with null boundary Dirichlet conditions.\\

\textbf{Assumption (OL):} The obstacle $S: [0,T]\times \Omega\times
\cO\rightarrow \R$ is  an adapted random field, almost surely
quasi-continuous, such that $S_0 \leq \xi$ $P$-almost surely and
controlled by a \textbf{local} solution of an SPDE, i.e. $\forall
t\in[0,T],$
\begin{equation*}S_t\leq S'_t,\quad dP\otimes dt\otimes dx-a.e.\end{equation*} where
$S'$ is a \textbf{local} solution of the linear SPDE
\begin{equation*}\left\{\begin{array}{ccl}
 dS'_t&=&LS'_tdt+f'_tdt+\sum_{i=1}^d \partial_i g'_{i,t}dt+\sum_{j=1}^{+\infty}h'_{j,t}dB^j_t\\
                  S'(0)&=&S'_0 .
\end{array}\right. \end{equation*}
  \textbf{Assumption (HO2)}   $$  E \left(\|\xi \|_2^2   +\left\| f'\right\|_{2,2;T}^2+\left\|
|g'|\right\| _{2,2;T}^2+\left\| |h'|\right\| _{2,2;T}^2\right)
<\infty .
$$
\\[0.2cm]
\textbf{Assumption (HOL)} $$  E \int_K |S'_0|^2 dx + E \, \int_{0}^T
\int_K \big( |f'_t(x)|^2 + |g'_t (x)|^2 + |h'_t (x)|^2 \, \big) dx
dt < \infty
$$for any compact set $K \subset \mathcal{O}$.\begin{remark}\label{remark3} It is well-known that under {\bf (HO2)} $S'$ belongs to $\cH_T$, is unique and satisfies the following
estimate:
\begin{equation}\label{estimobstacle1}
E\sup_{t\in[0,T]}\parallel
S'_t\parallel^2+E\int_0^T\mathcal{E}(S'_t)dt\leq CE\left[\parallel
S'_0\parallel^2+\int_0^T(\parallel f'_t\parallel^2+\parallel
|g'_t|\parallel^2+\parallel |h'_t|\parallel^2)dt\right],
\end{equation}
see for example Theorem 8 in \cite{DenisStoica}. Moreover, as a
consequence of  Theorem 3 in \cite{DMZ12}, we know that $S'$ admits
a quasi-continuous version.
\end{remark}
\begin{definition} A pair
$(u,\nu)$ is said to be a solution of the problem (\ref{SPDEO}) if
\begin{enumerate}
    \item $u\in\mathcal{H}_T$, $u(t,x)\geq S(t,x),\ dP\otimes dt\otimes
    dx-a.e.$ and $u_0(x)=\xi,\ dP\otimes dx-a.e.$;
    \item $\nu$ is a random regular measure defined on
    $[0,T[\times\mathcal{O}$;
    \item the following relation holds almost surely, for all
    $t\in[0,T]$ and all $\varphi\in\mathcal{D}$,
     \begin{equation}\begin{split}\label{solution}(u_t,\varphi_t)=&(\xi,\varphi_0)+\int_0^t(u_s,\partial_s\varphi_s)ds-\int_0^t\mathcal{E}(u_s,\varphi_s)ds\\&-\sum_{i=1}^d\int_0^t(g^i_s(u_s,\nabla u_s),\partial_i\varphi_s)ds
    +\int_0^t(f_s(u_s,\nabla u_s),\varphi_s)ds\\&+\sum_{j=1}^{+\infty}\int_0^t(h^j_s(u_s,\nabla u_s),\varphi_s)dB^j_s+\int_0^t\int_{\mathcal{O}}\varphi_s(x)\nu(dx,ds);\end{split}\end{equation}
    \item $u$ admits a quasi-continuous version, $\tilde{u}$, and we have  $$\int_0^T\int_\cO(\tilde{u}(s,x)-S(s,x))\nu(dx,ds)=0,\ \
    P-a.s.$$
  \end{enumerate}
\end{definition}
We denote by $\cR(\xi,f,g,h,S)$ the solution of the obstacle problem
when it exists and  is unique.
\begin{definition} A pair
$(u,\nu)$ is said to be a local solution of the problem
(\ref{SPDEO}) if
\begin{enumerate}
    \item $u\in\mathcal{H}_{loc}$, $u(t,x)\geq S(t,x),\ dP\otimes dt\otimes
    dx-a.e.$ and $u_0(x)=\xi,\ dP\otimes dx-a.e.$;
    \item $\nu$ is a local random regular measure defined on
    $[0,T[\times\mathcal{O}$;
    \item the following relation holds almost surely, for all
    $t\in[0,T]$ and all $\varphi\in\mathcal{D}$,
     \begin{equation}\begin{split}\label{solutionlocal}(u_t,\varphi_t)=&(\xi,\varphi_0)+\int_0^t(u_s,\partial_s\varphi_s)ds-\int_0^t\mathcal{E}(u_s,\varphi_s)ds\\&-\sum_{i=1}^d\int_0^t(g^i_s(u_s,\nabla u_s),\partial_i\varphi_s)ds
    +\int_0^t(f_s(u_s,\nabla u_s),\varphi_s)ds\\&+\sum_{j=1}^{+\infty}\int_0^t(h^j_s(u_s,\nabla u_s),\varphi_s)dB^j_s+\int_0^t\int_{\mathcal{O}}\varphi_s(x)\nu(dx,ds);\end{split}\end{equation}
    \item $u$ admits a quasi-continuous version, $\tilde{u}$, and we have  $$\int_0^T\int_\cO(\tilde{u}(s,x)-S(s,x))\nu(dx,ds)=0,\ \
    P-a.s.$$
  \end{enumerate}
\end{definition}
We denote by $\cR_{loc}(\xi,f,g,h,S)$ the set of all the local
solutions $(u,\nu)$.
\\ Finally, in the sequel, we introduce some constants $\epsilon$, $\delta>0$, we shall denote by $C_\epsilon$, $C_\delta$ some constants depending only on  $\epsilon$, $\delta$, typically those appearing in the kind of inequality
\begin{equation}\label{young}|ab|\leq \epsilon a^2 + C_\epsilon b^2.\end{equation}

\section{$L^p-$estimate for the uniform norm of solutions with null
Dirichlet boundary condition}\label{LPestimate} In this section, we
want to study, for some $p\geq2$, the $L^p-$ estimate for the
uniform norm  of the solution of  (\ref{SPDEO}). To get such
estimate, we need stronger integrability conditions on the
coefficients and the initial condition. To this end, we consider the
following assumptions: for $\theta\in[0,1[$ and
$p\geq2$:\\

\textbf{Assumption (HI$\mathbf{2
p}$)}$$E\left(\left\|\xi\right\|_\infty^p+\left\|
f^0\right\|^2_{2,2;T}+\left\| |g^0 |\right\|^2_{2,2;T}+\left\| |h^0
|\right\|^2_{2,2;T}\right)<\infty .$$

\textbf{Assumption (HO$\mathbf{\infty p}$)}
$$S'_0\in L^\infty(\Omega\times\cO)\ and\ E\left((\left\|
f'\right\|_{\infty,\infty;T})^p+(\left\|  |g'
|^2\right\|_{\infty,\infty;T})^{p/2}+(\left\|  |h'
|^2\right\|_{\infty,\infty;T})^{p/2}\right)<\infty .$$

To get the estimates we need,  we apply It\^o's formula to $u-S'$,
in order to take advantage of the fact that $S-S'$ is non-positive
and that as $u$ is solution of (\ref{SPDEO}) and $S'$ satisfies
(\ref{obstacle}), $u-S'$ satisfies
\begin{equation}\label{uminusS'}\left\{ \begin{split}&d(u_t-S'_t)=\partial_i  (a_{i,j}(x)\partial_j(u_t(x)-S'_t(x)))dt+(f(t,x,u_t(x),\nabla
u_t(x))-f'(t,x))dt \\&+\partial_i(g_i(t,x,u_t(x),\nabla
u_t(x))-g'_i(t,x))dt+(h_j(t,x,u_t(x),\nabla
u_t(x))-h'_j(t,x))dB^j_t\\&+\nu(x,dt), \\
 &(u-S')_0=\xi-S'_0\, ,\\&u-S'\geq S-S'\, . \end{split}\right.\end{equation}
that is why we introduce the following functions:
$$\bar{f}(t,\omega,x,y,z)=f(t,\omega,x,y+S'_t,z+\nabla
S'_t)-f'(t,\omega,x)$$
$$\bar{g}(t,\omega,x,y,z)=g(t,\omega,x,y+S'_t,z+\nabla S'_t)-g'(t,\omega,x)$$
$$\bar{h}(t,\omega,x,y,z)=h(t,\omega,x,y+S'_t,z+\nabla S'_t)-h'(t,\omega,x).$$
Let us remark that the Skohorod condition for $u-S'$ is satisfied
since
$$ \int_0^T\int_{\cO} (u_s (x)-S'_s (x))-(S_s (x)-S'_s (x))\nu (ds ,dx)=\int_0^T\int_{\cO} (u_s (x)-S_s (x))\nu (ds ,dx)=0.$$
It is obvious that $\bar{f}$, $\bar{g}$ and $\bar{h}$ satisfy the
Lipschitz conditions with the same Lipschitz coefficients as $f$,
$g$ and $h$ and $\left\|\xi-S'_0\right\|_\infty\in
L^p(\Omega,P)$. Nevertheless, we need a supplementary hypothesis:\\
\textbf{Assumption (HD$\mathbf{\theta p}$)} $$E((\left\|
\bar{f}^0\right\|^*_{\theta;T})^p+(\left\|  |\bar{g}^0
|^2\right\|^*_{\theta;T})^{p/2}+(\left\|  |\bar{h}^0
|^2\right\|^*_{\theta;T})^{p/2})<\infty .$$

This assumption is fulfilled in the following case:
\begin{example}
If $\left\|\nabla S'\right\|^*_{\theta;T},\
\left\|f^0\right\|^*_{\theta;T},\ \left\|g^0\right\|^*_{\theta;T}\
and\ \left\|h^0\right\|^*_{\theta;T}$ belong to $ L^p(\Omega,P), $
and assumptions {\bf (H)} and  {\bf(HO$\mathbf{\infty p}$)}  hold,
then:\vspace{0.5cm}
\\$\bar{f}$ satisfies the Lipschitz condition with the same Lipschitz coefficients:
 \begin{eqnarray*}\left|\bar{f}(t,\omega,x,y,z)-\bar{f}(t,\omega,x,y',z')\right|&=&\big|f(t,\omega,x,y+S'_t(x),z+\nabla S'_t(x))+f'(t,\omega,x)\\&-&f(t,\omega,x,y'+S'_t(x),z'+\nabla S'_t(x))-f'(t,\omega,x)\big|\\&\leq& C\left|y-y'\right|+C\left|z-z'\right|.\end{eqnarray*}

$\bar{f}$ satisfies the integrability condition:
\begin{eqnarray*}\left\|\bar{f}^0\right\|^*_{\theta;T}&=&\left\|f(S',\nabla
S')-f'\right\|^*_{\theta;T}\leq \left\|f(S',\nabla
S')\right\|^*_{\theta;T}+\left\|f'\right\|^*_{\theta;T}\\&\leq&\left\|f^0\right\|^*_{\theta;T}+C\left\|S'\right\|^*_{\theta;T}+C\left\|\nabla
S'\right\|^*_{\theta;T}+\left\|f'\right\|_{\infty,\infty;T}.\end{eqnarray*}

And the same for $\bar{g}$ and $\bar{h}$, which proves that {\bf
(HD$\mathbf{\theta p}$)} holds.
\end{example}
We now give the main result of this Section, which is a version of
the maximum principle in the case of a solution vanishing on the
boundary of $\cO$:
\begin{theorem}\label{LPESTIM}
Suppose that assumptions {\bf (H)}, {\bf (O)}, {\bf (HI$\mathbf{2
p}$)}, {\bf (HO$\mathbf{\infty p}$)} and  {\bf (HD$\mathbf{\theta
p}$)} hold, for some $\theta\in[0,1[$ and $p\geq2$ and that the
constants of Lipschitz conditions satisfy
$$\alpha+\frac{\beta^2}{2}+72\beta^2<\lambda.$$ Let $(u,\ \nu)$ be
the solution of OSPDE (\ref{SPDEO}) with null boundary condition,
then for all $t\in [0,T]$,
\begin{equation*}\begin{split}E\left\| u\right\|^p_{\infty,\infty;t}&\leq c(p)k(t)E\big(\left\|\xi\right\|_\infty^p+\left\|S'_0\right\|_\infty^p+\left\|f'\right\|_{\theta;t}^{*p}+\left\||g'|^2\right\|_{\theta;t}^{*p/2}+\left\||h'|^2\right\|_{\theta;t}^{*p/2}\\&\quad+\left\|\bar{f}^0\right\|_{\theta;t}^{*p}+\left\||\bar{g}^0|^2\right\|_{\theta;t}^{*p/2}+\left\||\bar{h}^0|^2\right\|_{\theta;t}^{*p/2}\big),\end{split}\end{equation*}
where $c(p)$ is a constant which depends on $p$ and $k(t)$ is a
constant which depends on the structure constants and $t\in[0,T]$.
\end{theorem}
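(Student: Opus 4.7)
My plan is to decompose $u = (u - S') + S'$ and to estimate each piece separately. Setting $v := u - S'$, the process $v$ satisfies the SPDE \eqref{uminusS'} with coefficients $\bar f,\bar g,\bar h$, initial data $\xi - S_0'$, null Dirichlet boundary condition (since both $u$ and $S'$ vanish on $\partial\cO$) and obstacle $S - S'\le 0$. For $S'$, which solves the linear SPDE \eqref{obstacle} without obstacle, the maximum principle of \cite{DMS05,DMS09} directly yields an $L^p$ bound on $\|S'\|_{\infty,\infty;t}$ in terms of $\|S_0'\|_\infty,\|f'\|^*_{\theta;t},\||g'|^2\|^{*}_{\theta;t},\||h'|^2\|^{*}_{\theta;t}$, i.e.\ the first group of terms on the right-hand side. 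It therefore remains to produce an analogous $L^p$ bound of $\|v\|_{\infty,\infty;t}$ in terms of $\|\xi-S_0'\|_\infty$ and the norms of $\bar f^0,\bar g^0,\bar h^0$.

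The key observation is that the Skohorod condition $\int (\tilde u - S)\,d\nu = 0$ combined with $S\le S'$ forces $u = S$ and hence $\tilde v = S-S'\le 0$ quasi-everywhere on the support of $\nu$. Consequently $\int \Phi(\tilde v)\,d\nu = 0$ for any nonnegative $\Phi$ vanishing on $(-\infty,0]$ (e.g.\ $\Phi(x) = (x^+)^{2p}$), while for $\Psi(x) = (x^-)^{2p}$ the quantity $\int \Psi'(\tilde v)\,d\nu$ has the favourable sign. Applying the It\^o formula of \cite{DMZ12} to $(v_t^\pm)^{2p}$, using the Lipschitz bounds on $\bar f,\bar g,\bar h$, ellipticity of $a$, the BDG inequality, and Young-type splittings as in \eqref{young} to absorb gradient terms, one extracts an energy inequality of the schematic form
\[
E\|v_t^\pm\|_{2p}^{2p} + c\,p(2p-1)\,E\!\int_0^t \bigl\| |v_s^\pm|^{p-1}\nabla v_s^\pm \bigr\|_2^2\, ds \;\le\; R_{2p}(t),
\]
where $R_{2p}(t)$ involves $E\|(\xi-S_0')^\pm\|_{2p}^{2p}$ together with $\theta$-norms of $\bar f^0,\bar g^0,\bar h^0$ localized on $\{v^\pm>0\}$.

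From this inequality I would run a Moser iteration: the Sobolev inequality \eqref{Sobolev} together with the control \eqref{controltheta} converts the energy estimate for $|v^\pm|^p$ into an estimate for $\||v^\pm|^p\|_{\theta;t}$, which upgrades the $L^{2p}$-bound on $v^\pm$ to an $L^{2p\cdot 2^*/2}$-bound; iterating along $p_n = p\,(2^*/2)^n$ and controlling the product of the constants produced at each step yields the required $L^\infty$-estimate. Combined with Step 1 via $\|u\|_{\infty,\infty;t}\le \|v\|_{\infty,\infty;t}+\|S'\|_{\infty,\infty;t}$, this delivers the stated bound. The main technical obstacle is precisely the Moser iteration in the stochastic setting: one must ensure the constants stay summable as the exponent grows and absorb back into the dissipation the gradient terms that BDG extracts from the martingale part; this is where the strengthened contraction $\alpha + \beta^2/2 + 72\beta^2 < \lambda$ is used, the numerical coefficient $72$ being the price of BDG before reabsorbing $\beta^2\|\nabla v\|^2_2$.
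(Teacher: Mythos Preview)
Your overall strategy coincides with the paper's: reduce to $v=u-S'$, exploit that on the support of $\nu$ one has $v=S-S'\le 0$ so the measure contribution in the It\^o formula has the right sign, run a Moser iteration on powers of $v$, and add back the known bound on $S'$ from \cite{DMS05}. The paper works directly with $|v|^l$ (the term $l\int\mathrm{sgn}(v)|v|^{l-1}\,d\nu\le 0$ in one shot) rather than splitting into $v^\pm$, but your variant is harmless.

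The place where your outline is genuinely incomplete is the control of the martingale part through the Moser iteration. A single BDG at a fixed exponent gives an energy inequality for that exponent, but to reach $L^\infty$ you must iterate along $l_n\to\infty$, and a naive BDG at each step produces constants whose product is not obviously finite; moreover your displayed inequality bounds $E\|v_t^\pm\|_{2p}^{2p}$ pointwise in $t$, whereas \eqref{controltheta} needs $\sup_{s\le t}$. The paper handles both issues via the \emph{domination} technique of Revuz--Yor: one shows (Lemma~\ref{tau} and Lemma~\ref{sup}) that the process carrying $\sup_{s\le t}\|v_s\|_l^l$ plus the accumulated energy is dominated, in the stopping-time sense, by an increasing process built from the data, and then applies the domination inequality $E[(X^*)^k]\le C_kE[A^k]$ with $k=\sigma^{-n}$, where $\sigma=(d+2\theta)/d$ is the correct iteration ratio (not $2^*/2$). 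The explicit bound $C_{\sigma^{-n}}\le\sigma^{n/\sigma^n}(1-\sigma^{-n})^{-1}$ makes the infinite product converge, and the strengthened contraction $\alpha+\beta^2/2+72\beta^2<\lambda$ is precisely what keeps the domination constant $\tau$ of Lemma~\ref{tau} positive. A second omission: applying It\^o to $|v|^l$ for arbitrary $l$ requires a priori $L^l$ integrability of $v$, so the paper first treats the case of uniformly bounded $\xi,\bar f^0,\bar g^0,\bar h^0$ (Subsection~\ref{casborn}, Lemma~\ref{estimateul} and Proposition~\ref{Lp}) and then passes to the general case by truncation, proving convergence of both $\bar u^n$ and the associated measures $\nu^n$ (Subsection~\ref{casgen}).
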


\begin{remark}
The relations
$\left\|f'\right\|_{\theta;t}^{*p}\leq(\left\|f'\right\|_{\infty,\infty;t})^{p}$,
$\left\||g'|^2\right\|_{\theta;t}^{*p/2}\leq(\left\||g'|^2\right\|_{\infty,\infty;t})^{p/2}$
and
$\left\||h'|^2\right\|_{\theta;t}^{*p/2}\leq(\left\||h'|^2\right\|_{\infty,\infty;t})^{p/2}$
and assumption {\bf(HO$\mathbf{\infty p}$))} yield
$$E\left(\left\|f'\right\|_{\theta;t}^{*p}+\left\||g'|^2\right\|_{\theta;t}^{*p/2}+\left\||h'|^2\right\|_{\theta;t}^{*p/2}\right)<+\infty.$$
\end{remark}

As the proof of this theorem is quite long, we split it into several
steps.
\subsection{The case where $\xi$, $\bar{f}^0$, $\bar{g}^0$ and $\bar{h}^0$ are uniformly bounded}\label{casborn}
 In this subsection, we assume that the hypotheses {\bf (H)}, {\bf (O)}, {\bf (HI$\mathbf{2
p}$)}, {\bf (HO$\mathbf{\infty p}$)} hold and  we add the following
stronger ones:
\begin{equation*}\label{stronger1}\xi\in
L^\infty(\Omega\times\cO),\end{equation*}and
$$\bar{f}^0,\ \bar{g}^0,\ \bar{h}^0\in L^\infty(\mathbb{R}_+\times\Omega\times\cO).$$
Then it is obviously that $\xi-S'_0\in L^\infty(\Omega\times\cO)$.\\
Under these hypotheses, we know that the SPDE with obstacle
(\ref{SPDEO}) admits a unique weak solution $(u,\nu) = \mathcal R
(\xi, f,g,h,S)$ and that
$(u-S',\nu)=\cR(\xi-S'_0,\bar{f},\bar{g},\bar{h},S-S')$. We start by
proving the following $L^l-$estimate:

\begin{lemma}\label{estimateul}
The solution $u$ of the problem (\ref{SPDEO}) belongs to
$\cap_{l\geq2}L^l([0,T]\times\cO\times\Omega)$. Moreover there exist
constants $c,\ c'>0$ which only depend on $\ C,\ \alpha,\ \beta$ and
on the quantity
\begin{eqnarray*}K=\left\|\xi-S'_0\right\|_{L^\infty(\Omega\times\cO)}\vee\left\|\bar{f}^0\right\|_{L^\infty(\mathbb{R}_+\times\Omega\times\cO)}\vee\left\|\bar{g}^0\right\|_{L^\infty(\mathbb{R}_+\times\Omega\times\cO)}\vee\left\|\bar{h}^0\right\|_{L^\infty(\mathbb{R}_+\times\Omega\times\cO)}\end{eqnarray*}
 such that, for all real $l\geq2$,
\begin{equation}\label{uS'l}E\int_\cO|u_t(x)-S'_t(x)|^ldx\leq cK^2l(l-1)e^{cl(l-1)t}\end{equation}
\begin{equation}\label{nablauS'l}E\int_0^t\int_\cO|u_s(x)-S'_s(x)|^{l-2}|\nabla(u_s(x)-S'_s(x))|^2dxds\leq c'K^2l(l-1)e^{cl(l-1)t}\end{equation}
and\begin{equation}\label{nuS'l}E\int_0^t\int_\cO
|u_s(x)-S'_s(x)|^{l-1}\nu(dxds)<+\infty.\end{equation}
\end{lemma}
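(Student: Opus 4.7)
The plan is to apply an It\^o-type formula (the one established in \cite{DMZ12} for solutions of OSPDEs) to the functional $v \mapsto \tfrac{1}{l}\int_\cO |v|^l\,dx$ along the process $v_t := u_t - S'_t$. By \eqref{uminusS'}, $(v,\nu)$ is the solution of an obstacle SPDE with shifted coefficients $\bar f,\bar g,\bar h$ that are Lipschitz with the same constants as $f,g,h$ and whose zero-parts are uniformly bounded by $K$. Since $l\ge 2$, the function $\phi(r):=|r|^l/l$ is $C^2$ on $\R$ with $\phi'(r)=|r|^{l-2}r$ and $\phi''(r)=(l-1)|r|^{l-2}$, so no regularization at the origin is needed.

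The cornerstone of the argument is the sign of the term carrying $\nu$. By the Skorohod condition, $\nu$ is supported on $\{\tilde u = S\}$, and on that set $v = u-S' = S-S'\le 0$ by Assumption {\bf (O)}. Consequently
$$\int_0^t\!\!\int_\cO \phi'(\tilde v_s)\,\nu(dx,ds)\;=\;-\int_0^t\!\!\int_\cO |\tilde v_s|^{l-1}\,\nu(dx,ds) \;\le\; 0,$$
so this term has the favorable sign. Once the right-hand side of It\^o's formula is controlled, transposing this integral to the left-hand side yields simultaneously the bound \eqref{nuS'l}.

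The remainder is a standard coercivity computation. Uniform ellipticity produces the term $(l-1)\lambda \int_\cO |v|^{l-2}|\nabla v|^2\,dx$ on the left; the Lipschitz property of $\bar g,\bar h$, the $L^\infty$ bounds $|\bar f^0|,|\bar g^0|,|\bar h^0|\le K$, and Young's inequality \eqref{young} together bound the drift, divergence and It\^o-correction contributions by
$$\bigl(\alpha+\tfrac{1}{2}\beta^2+\varepsilon\bigr)(l-1)\int_\cO |v|^{l-2}|\nabla v|^2\,dx \;+\; C_\varepsilon\,l(l-1)\int_\cO\bigl(K^2 + |v|^l\bigr)\,dx.$$
Since $\alpha+\beta^2/2<\lambda$ (Assumption {\bf (H)}), choosing $\varepsilon$ small leaves a strictly positive multiple of the coercive term on the left. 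Taking expectation kills the stochastic integral, and Gr\"onwall applied to the resulting integral inequality for $E\int_\cO|v_t|^l\,dx$ produces \eqref{uS'l}. The surviving coercive term then delivers \eqref{nablauS'l}.

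The main technical obstacle is to justify rigorously the use of It\^o's formula, in particular the interpretation of the measure integral $\int\phi'(\tilde v)\,d\nu$: one must work with the quasi-continuous version $\tilde v$ and exploit the fact that regular measures do not charge polar sets, which is exactly the framework developed in \cite{DMZ12}. If need be, one can first mollify $\phi$ into a $C^2_b$ approximation, apply the formula there, and then pass to the limit using Lemma \ref{convergemeas}; the Brownian contribution is handled via its martingale property together with the a priori membership of $v$ in $\cH_T$.
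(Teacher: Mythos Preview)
Your outline is essentially the paper's proof: apply the It\^o formula of \cite{DMZ12} to $v=u-S'$, use the Skorohod condition together with $S\le S'$ to see that the $\nu$-integral has the favourable sign, absorb the $\bar g,\bar h$ contributions into the coercive term via Young and the contraction condition, take expectation, and conclude with Gr\"onwall and Fatou.

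There is one point, however, that you treat as optional when it is not. The It\^o formula of \cite{DMZ12} (Theorem~5) applies to $\varphi\in C^2$ with \emph{bounded} second derivative. For $l>2$ your $\phi(r)=|r|^l/l$ has $\phi''(r)=(l-1)|r|^{l-2}$, which is unbounded; the issue is not at the origin but at infinity. The paper therefore introduces an explicit truncation $\varphi_n$ agreeing with $|r|^l$ on $[-n,n]$ and quadratic outside, so that $\varphi_n''$ is bounded for each fixed $n$. The estimates \eqref{uS'l}--\eqref{nablauS'l} are first established \emph{uniformly in $n$} for $\varphi_n$, and only then does one let $n\to\infty$ via Fatou. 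This order matters for two reasons: (i) ``taking expectation kills the stochastic integral'' requires it to be a true martingale, which at the $\varphi_n$-level follows from $|\varphi_n'(v)|\le l(\varphi_n(v)+1)$ and the known $\cH_T$-bounds, whereas for the untruncated $\phi'$ you would need precisely the $L^{l-1}$-integrability you are trying to prove; (ii) the sign of the measure term must be checked for $\varphi_n'$, which the paper does by splitting $\{|S-S'|\le n\}$ and $\{|S-S'|>n\}$. Your fallback suggestion to ``mollify $\phi$ into a $C^2_b$ approximation'' is the right instinct, but Lemma~\ref{convergemeas} is not the relevant tool here; the passage to the limit uses Fatou (for \eqref{uS'l}, \eqref{nablauS'l}, \eqref{nuS'l}) and dominated convergence, once the $n$-uniform bounds are in hand.
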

\begin{proof}
Notice first that if
$(u-S',\nu)=\cR(\xi-S'_0,\bar{f},\bar{g},\bar{h},S-S')$, then
$$
\bar{f}\left(u-S',\nabla(u-S')\right)
,\bar{g}_i\left(u-S',\nabla(u-S')\right) ,\bar{h}_i\left(
u-S',\nabla(u-S')\right) \in L^2\left([0,T];L^2\left( \Omega \times
{\cal O}\right) \right)
$$
and consequently we can apply It\^o's formula to $(u-S',\nu)$ (See Theorem 5 in \cite{DMZ12}).\\
 We fix a real $l\geq2$, $T > 0$  and introduce the sequence $(\vphi_n
)_{n\in\Ne}$ of functions such that for all $n\in\Ne$:
\[\forall x\in\R,\, \vphi_n (x)= \left\{
\begin{array}{ll}
 \mid x \mid^l&\makebox{ if }\mid x\mid\leq n\\
 n^{l-2} \, \big[  \frac{l (l-1)}{2} (|x|-n)^2+ l \, n (|x|-n) + n^2\,\big] &\makebox{ if }\mid x\mid >n
 \end{array}\right.\]
One can easily verify that for  fixed $n$, $\vphi_n$ is twice
differentiable with bounded second derivative, $ \varphi_n^{\prime
\prime} (x) \geq 0$, and as $ n \to \infty $ one has
 $ \varphi_n (x)\longrightarrow |x|^{l}$,  $\varphi_n^{\prime} (x)\longrightarrow lsgn(x)|x|^{l-1}$,
  $\varphi_n^{\prime \prime} (x)\longrightarrow l (l-1)|x|^{l-2}$.
  Moreover, the following relations hold, for
all $x\in \bbR$ and $n\geq l$:
\begin{enumerate}
\item $\mid x\vphi_n^{\prime}(x)\mid\leq l\vphi_n (x). $
\item $\mid \vphi_n^{\prime}(x)\mid\leq\mid
x\vphi_n^{\prime\prime}(x)\mid$.
\item $\mid x^2\vphi_n^{\prime\prime}(x)\mid \leq l(l-1)\vphi_n (x)$.
\item $|\vphi_n^{\prime}(x)|\leq l(\vphi_n (x)+1 ).$
\item $|\vphi_n^{\prime\prime}(x)|\leq l(l-1)(\vphi_n (x)+1 ).$
\end{enumerate}
Applying It\^o's formula to $\varphi_n(u-S')$, we have $P$-a.s. for
all $t\in [0,T]$,
\begin{equation}\label{uS'It\^o's}
\begin{split}
& \int_{\mathcal{O}} \varphi_n (u_t (x)-S'_t(x)) \, dx \; + \;
\int_0^t \mathcal{E} \big(\varphi_n^{\prime} (u_s-S'_s), \, u_s-S'_s
\big) \, ds = \int_{\mathcal{O}} \varphi_n (\xi(x)-S'_0(x) ) \, dx
\\ & + \int_{0}^{t}\int_{\mathcal{O}}
\varphi_n^{\prime}(u_s(x)-S'_s(x)) \bar{f}  (s,x, u_s-S'_s,\nabla (u_s-S'_s)) \, dx ds \\
&  - \sum_{i=1}^{d} \int_{0}^{t} \int_{\mathcal{O}}
\varphi_n^{\prime \prime } (u_s(x)-S'_s(x) )
\partial_i (u_s(x)-S'_s(x)) \, \bar{g}_{i} (s,x,u_s-S'_s, \nabla (u_s-S'_s)) \, dx \, ds \\& +
\sum_{j=1}^{\infty} \int_{0}^{t} \int_{\mathcal{O}}
\varphi_n^{\prime} (u_s(x)-S'_s(x)) \,  \bar{h}_{j}( s,x, u_s-S'_s,\nabla (u_s-S'_s))\, dx dB_{s}^{j}\\
& + \frac{1}{2}\, \sum_{j=1}^{\infty} \int_{0}^{t}\int_{\mathcal{O}}
\varphi_n^{\prime \prime } (u_s(x)-S'_s(x)) \bar{h}_{j}^2(s,x,
u_s-S'_s,\nabla (u_s-S'_s)) \, dx\, ds\\&
+\int_0^t\int_\cO\varphi_n^{\prime}(u_s(x)-S'_s(x))\nu(dxds) \, .\\
\end{split}
\end{equation}
Since the support of $\nu$ is $\{u=S\}$, the last term is equal to
$$\int_0^t\int_\cO\varphi_n^{\prime}(S_s(x)-S'_s(x))\nu(dxds)$$and
it is negative, because
$$\int_0^t\int_\cO\varphi_n^{\prime}(S_s(x)-S'_s(x))\1_{\{\left|S-S'\right|\leq n\}}\nu(dxds)=l\int_0^t\int_\cO sgn(S-S')\left|S_s(x)-S'_s(x)\right|^{l-1}\nu(dxds)\leq 0$$
and\begin{eqnarray*}\int_0^t\int_\cO\varphi_n^{\prime}(S_s(x)&-&S'_s(x))I_{\{\left|S-S'\right|>n\}}\nu(dxds)\\&=&\int_0^t\int_\cO
n^{l-2}[l(l-1)(\left|S-S'\right|-n)sgn(S-S')+sgn(S-S')ln]\nu(dxds)\leq0\end{eqnarray*}
By the uniform ellipticity of the operator $A$ we get
\[
\mathcal{E} \big(\varphi_n^{\prime} (u_s-S'_s), \, u_s-S'_s \big) \,
\geq \, \lambda  \, \int_{\cO} \varphi_n^{\prime \prime} (u_s-S'_s)
|\nabla (u_s-S'_s)|^2 \, dx.
\]
  Let $\epsilon >0$ be fixed. Using the Lipschitz condition on $\bar{f}$ and the properties of the
functions $(\varphi_n)_n$ we get
\begin{equation*} \begin{split}   & |\varphi_n^{\prime} (u_s-S'_s)| \, |\bar{f}(s,x,u_s-S'_s, \nabla u_s-S'_s)| \\&
 \leq \,   |\varphi_n^{\prime} (u_s-S'_s)| \, \big( |\bar{f}^0 (s,x)| + C \, (|u_s-S'_s| +
 |\nabla (u_s-S'_s)|)\, \big)\\
 & \leq  \, |\varphi_n^{\prime} (u_s-S'_s)||\bar{f}^0 (s,x)|+ |u_s-S'_s||\varphi_n^{\prime\prime}
 (u_s-S'_s)| \, (C |u_s-S'_s| + C
 |\nabla (u_s-S'_s)|)\, )\\
 &\leq l (\varphi_n (u_s-S'_s) + 1) \, |\bar{f}^0(s,x) | + C|u_s-S'_s|^2|\varphi_n^{\prime\prime}
 (u_s-S'_s)| +C|u_s-S'_s| |\nabla (u_s-S'_s)||\varphi_n^{\prime\prime}
 (u_s-S'_s)|\\
 & \leq l (\varphi_n (u_s-S'_s) + 1) \, |\bar{f}^0(s,x) | + (C +
 c_{\epsilon})\, |u_s-S'_s|^2
 \varphi_n^{\prime\prime}(u_s-S'_s)  + \, \epsilon \varphi_n^{\prime\prime}(u_s-S'_s)|\nabla u_s-S'_s|^2 .\\
 \end{split}
 \end{equation*}
 Now using Cauchy-Schwarz inequality and the Lipschitz condition on $\bar{g}$ we get
 \begin{equation*}
 \begin{split}
 & \,\sum_{i=1}^d \varphi_n^{\prime\prime}(u_s-S'_s) \partial_i (u_s-S'_s)
 \, \bar{g}_i (s,x,u_s-S'_s,\nabla (u_s-S'_s)) \\&\leq \, \varphi_n^{\prime\prime}(u_s-S'_s)
 \, |\nabla (u_s-S'_s)|\, \big( |\bar{g}^0 (s,x)| + C|u_s-S'_s| +
 \alpha  |\nabla (u_s-S'_s)|  \, \big) \,   \\
 &   \leq \,  \epsilon \, \varphi_n^{\prime\prime} (u_s-S'_s)
 |\nabla (u_s-S'_s)|^2 + 2c_{\epsilon} \varphi_n^{\prime\prime} (u_s-S'_s)\, \big( K^2+
 C^2|u_s-S'_s|^2\, \big)+ \alpha \, \varphi_n^{\prime\prime} (u_s-S'_s)|\nabla (u_s-S'_s)|^2  \\
 & \leq   l(l-1)c_{\epsilon}K^2 + 2c_{\epsilon}(K^2 + C^2 ) l (l-1)
 |\varphi_n (u_s-S'_s)|+
 (\alpha + \epsilon) \, \varphi_n^{\prime\prime} (u_s-S'_s)
 |\nabla (u_s-S'_s)|^2.   \\
 \end{split}
 \end{equation*}
  In the same way as before
\begin{equation*}
 \begin{split}&
\sum_{j=1}^{\infty}  \varphi_n^{\prime \prime } (u_s-S'_s )
\bar{h}_{j}^2(s, u_s-S'_s,\nabla (u_s-S'_s))\\& \leq \,
\varphi_n^{\prime \prime }  (u_s-S'_s)
 \, \big( \, c'_{\epsilon}(|\bar{h}^0 (s,x)|+ C|u_s-S'_s|)^2  \, + \,
 (1 + \epsilon)\beta^2 \,  |\nabla (u_s-S'_s)|^2  \, \big) \\
 &\leq \varphi_n^{\prime \prime }  (u_s-S'_s)
 \, \big(2c'_{\epsilon}K^2 \, +\, 2c'_{\epsilon}C^2 |u_s-S'_s |^2 + \,
 (1 + \epsilon)\beta^2 \,  |\nabla (u_s-S'_s)|^2  \, \big)\\
 & \leq \, 2c'_{\epsilon} l (l-1)K^2 +  2c'_{\epsilon}( K^2 +C^2 )l (l-1) \varphi_n(u_s -S'_s)
 + (1 + \epsilon) \, \beta^2\, \varphi_n^{\prime \prime }(u_s-S'_s)|\nabla (u_s-S'_s)|^2.
\end{split}
 \end{equation*}
 Thus taking the expectation, we deduce
\begin{equation}
\label{e6}
\begin{split}
&E \, \int_{\mathcal{O}}\varphi_n (u_t (x)-S'_t(x)) \, dx + (\lambda
- \frac{1}{2}(1+ \epsilon)\beta^2 - (\alpha + 2 \epsilon) \, ) \, E
\, \int_0^t \int_{\mathcal{O}} \varphi_n^{\prime \prime} (u_s -S'_s)
\,
|\nabla( u_s-S'_s) |^2  \, dx \, ds \\
& \, \leq \, l(l-1)c''_{\epsilon} K^2 \, + \,  c''_{\epsilon} l
(l-1)\big( K^2 +C^2+C+ c_{\epsilon}\big)
E \, \int_0^t \int_{\mathcal{O}} \varphi_n (u_s(x) -S'_s(x))\, dx \, ds. \\
\end{split}
\end{equation}
On account  of the contraction condition, one can choose $ \epsilon
> 0 $ small enough such that $$ \lambda - \frac{1}{2}(1+
\epsilon)\beta^2 - (\alpha + 2 \epsilon)>0 $$  and then
\begin{equation*}
\begin{split}
E\, \int_{\mathcal{O}} \varphi_n (u_t (x)-S'_t(x)) \, dx    &
 \leq \, c K^2 l (l-1) \, + \,  cl (l-1)
E \, \int_0^t \int_{\mathcal{O}} \varphi_n (u_s (x)-S'_s(x))\, dx \, ds \, .\\
\end{split}
\end{equation*}
 We obtain by Gronwall's Lemma, that
\begin{equation}\label{varphi1}
\begin{split}
E\, \int_{\mathcal{O}} \varphi_n (u_t (x)-S'_t(x)) \, dx    &
 \leq \, c\,K^2  l (l-1) \, \exp{ \big( c \, l (l-1)\,t \big) }\\
\end{split}
\end{equation}
 and  so it is now easy from (\ref{e6})  to get
\begin{equation}\label{varphi2}
\begin{split}
  E \, \int_0^t \int_{\mathcal{O}} \varphi_n^{\prime \prime} (u_s (x)-S'_s(x))  \,
|\nabla (u_s-S'_s) |^2  \, dx \, ds  \, \leq \,
 c^{\prime}\, K^2 l \, (l-1) \,  \exp{ \big( c l (l-1)\,t \big)}.\\
\end{split}
\end{equation}
 Finally, letting $ n \to \infty $  by Fatou's lemma we deduce (\ref{uS'l}) and
 (\ref{nablauS'l}).
\\Then with (\ref{uS'It\^o's}), we know that
$$-\int_0^t\int_\cO\varphi'_n(u_s-S'_s)\nu(dxds)=-\int_0^t\int_\cO\varphi'_n(S_s-S'_s)\nu(dxds)\leq C.$$
This yields (\ref{nuS'l}) by Fatou's lemma.
\end{proof}
With the help of Lemma \ref{estimateul}, we are able to prove the
following It\^o formula:
\begin{proposition}
\label{Lp} Assume the hypotheses of the previous lemma. Let
$(u,\nu)$ be the solution of the problem (\ref{SPDEO}). Then for $
l\geq2$, we get the following It\^o's formula, $ P$-almost surely,
for all $t\in [0,T]$,
\begin{equation}
\label{LpIt\^o's}
\begin{split}
&  \int_{\mathcal{O}}\left| u_t (x)-S'_t(x) \right|^l  \, dx  +
\int_0^t \cE \, \big(l\, (u_s-S'_s)^{l -1}sgn(u_s-S'_s) , \,
u_s-S'_s \big) \, ds =
\int_{\mathcal{O}} \left|\xi (x)-S'_0(x) \right|^l \, dx  \\
& +l\int_{0}^{t}\int_{\mathcal{O}}sgn(u_s-S'_s)
 \left|u_s (x)-S'_s(x) \right|^{l-1} \bar{f} (s,x, u_s-S'_s,\nabla (u_s-S'_s)) \, dx ds\\
 & -l (l-1) \, \sum_{i=1}^{d} \,\int_{0}^{t} \int_{\mathcal{O}} \left|u_s
(x)-S'_s(x)  \right|^{l -2}
\partial_i (u_{s}(x)-S'_s(x))\, \bar{g}_{i} (s, x,u_s-S'_s,\nabla (u_s-S'_s)) \, dx \, ds \\
& + l \, \sum_{j=1}^{\infty} \int_{0}^{t}
\int_{\mathcal{O}}sgn(u_s-S'_s)
 \,  \left|u_s (x)-S'_s(x) \right|^{l-1}\bar{h}_{j}( s,x, u_s-S'_s,\nabla (u_s-S'_s))\, dx dB_{s}^{j}\\
 & + \frac{l (l-1)}{2}\, \sum_{j=1}^{\infty}
\int_{0}^{t}\int_{\mathcal{O}}
 \left|u_s (x)-S'_s(x) \right|^{l-2} \bar{h}_{j}^2(s,x, u_s-S'_s,\nabla (u_s-S'_s)) \, dx\, ds\\&+l\int_{0}^{t}\int_{\mathcal{O}}sgn(u_s-S'_s)
 \left|u_s (x)-S'_s(x) \right|^{l-1} \nu(dx\, ds)\, .\\
\end{split}
\end{equation}
\end{proposition}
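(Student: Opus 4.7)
The plan is to pass to the limit $n\to\infty$ in the It\^o identity (\ref{uS'It\^o's}) that was derived in the proof of Lemma~\ref{estimateul} for the $C^2$ approximations $\varphi_n$ of $|\cdot|^l$ introduced there. The three estimates (\ref{uS'l}), (\ref{nablauS'l}) and (\ref{nuS'l}) have been tailored precisely to supply integrable dominants for every term of that identity.

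First I would record the pointwise limits $\varphi_n(x)\to |x|^l$, $\varphi_n'(x)\to l\,\mathrm{sgn}(x)|x|^{l-1}$ and $\varphi_n''(x)\to l(l-1)|x|^{l-2}$, together with the uniform bounds $0\le\varphi_n(x)\le |x|^l$, $|\varphi_n'(x)|\le l|x|^{l-1}$ and $0\le\varphi_n''(x)\le l(l-1)|x|^{l-2}$, which follow from convexity of $|x|^l$ and the explicit polynomial form of $\varphi_n$ above level $n$. With these in hand, dominated convergence dispatches the spatial term $\int_\cO \varphi_n(u_t-S'_t)\,dx$ via (\ref{uS'l}), the $\bar f$--drift via the same estimate plus the Lipschitz assumption on $\bar f$, and the It\^o correction involving $\bar h^2$ via (\ref{uS'l})--(\ref{nablauS'l}) after the elementary bound $\bar h^2\lesssim|\bar h^0|^2+C^2|y|^2+(1+\eps)\beta^2|\nabla y|^2$ with $y=u-S'$. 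The energy term I would rewrite as
$$\cE(\varphi_n'(u-S'),u-S')=\int_\cO a^{ij}(x)\varphi_n''(u-S')\partial_i(u-S')\partial_j(u-S')\,dx,$$
which is dominated by $\Lambda\,l(l-1)|u-S'|^{l-2}|\nabla(u-S')|^2$ and hence integrable by (\ref{nablauS'l}); the mixed $\bar g$--term is handled by the same estimate after Young's inequality. For the stochastic integral I would pass to the limit in probability via the Burkholder--Davis--Gundy inequality, its bracket being controlled by $E\int_0^t\int_\cO|u_s-S'_s|^{2(l-1)}|\bar h|^2\,dx\,ds$, which is finite by applying (\ref{uS'l})--(\ref{nablauS'l}) with exponent $2l$.

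The delicate step, and the main obstacle, is the measure term $\int_0^t\int_\cO \varphi_n'(u_s-S'_s)\,\nu(dx,ds)$. Here I would exploit that $\nu$ is a regular measure and therefore, by the proposition that (local) regular measures do not charge polar sets, the integral depends only on the quasi-continuous version $\tilde u$ of $u$. The Skorohod identity $\int(\tilde u-S)\,d\nu=0$ together with $\tilde u\ge S$ quasi-everywhere forces $\tilde u=S$ $\nu$-a.e., and since $S'$ itself admits a quasi-continuous version by Remark~\ref{remark3}, one may substitute $\varphi_n'(S_s-S'_s)$ for $\varphi_n'(u_s-S'_s)$ under $\nu$. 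Dominated convergence then applies with the dominant $l|S_s-S'_s|^{l-1}=l|u_s-S'_s|^{l-1}$ on $\mathrm{supp}\,\nu$, which is $\nu$-integrable in expectation by (\ref{nuS'l}), and produces the target limit $l\int_0^t\int_\cO \mathrm{sgn}(u_s-S'_s)|u_s-S'_s|^{l-1}\,\nu(dx,ds)$. The hard part is precisely this identification, since $\nu$ is typically singular and the substitution of $S$ for $u$ is valid only quasi-everywhere; all other convergences reduce to routine Lebesgue-type arguments based on Lemma~\ref{estimateul}, and assembling them yields (\ref{LpIt\^o's}).
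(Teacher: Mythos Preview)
Your proposal is correct and follows the same route as the paper: start from the approximate It\^o identity \eqref{uS'It\^o's} for $\varphi_n$ and pass to the limit $n\to\infty$ term by term using Lemma~\ref{estimateul} and dominated convergence. The paper's own proof is in fact a one-line invocation of exactly this, so your write-up supplies the details the authors leave implicit; in particular your treatment of the $\nu$-term via the Skorohod condition, the substitution $u=S$ on $\mathrm{supp}\,\nu$, and the dominant from \eqref{nuS'l} is the right mechanism (and was already implicitly used in the proof of Lemma~\ref{estimateul} when the sign of that term was determined).
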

\begin{proof}
From It\^o's formula (see Theorem 5 in \cite{DMZ12}), with the same
notations as in the previous lemma, we have $ P$-almost surely, and
for all $t\in [0,T]$ and all $ n \in \bbN^*$,
\begin{equation*}
\begin{split}
& \int_{\mathcal{O}} \varphi_n (u_t (x)-S'_t(x)) \, dx \; + \;
\int_0^t \mathcal{E} \big(\varphi_n^{\prime} (u_s-S'_s), \, u_s-S'_s
\big) \, ds = \int_{\mathcal{O}} \varphi_n (\xi(x)-S'_0(x) ) \, dx
\\ & + \int_{0}^{t}\int_{\mathcal{O}}
\varphi_n^{\prime}(u_s(x)-S'_s(x)) \bar{f}  (s,x, u_s-S'_s,\nabla (u_s-S'_s)) \, dx ds \\
&  - \sum_{i=1}^{d} \int_{0}^{t} \int_{\mathcal{O}}
\varphi_n^{\prime \prime } (u_s(x)-S'_s(x) )
\partial_i (u_s(x)-S'_s(x)) \, \bar{g}_{i} (s,x,u_s-S'_s, \nabla (u_s-S'_s)) \, dx \, ds \\& +
\sum_{j=1}^{\infty} \int_{0}^{t} \int_{\mathcal{O}}
\varphi_n^{\prime} (u_s(x)-S'_s(x)) \,  \bar{h}_{j}( s,x, u_s-S'_s,\nabla (u_s-S'_s))\, dx dB_{s}^{j}\\
& + \frac{1}{2}\, \sum_{j=1}^{\infty} \int_{0}^{t}\int_{\mathcal{O}}
\varphi_n^{\prime \prime } (u_s(x)-S'_s(x)) \bar{h}_{j}^2(s,x,
u_s-S'_s,\nabla (u_s-S'_s)) \, dx\, ds\\&
+\int_0^t\int_\cO\varphi_n^{\prime}(u_s(x)-S'_s(x))\nu(dxds) \, .\\
\end{split}
\end{equation*}
Therefore, passing to the limit as $ n \to \infty $, the
convergences come from the  Lemma \ref{estimateul} and the dominated
convergence theorem.
\end{proof}

From now on, we assume  the following stronger
hypothesis:\begin{equation}\label{alphabet}\alpha+\frac{1}{2}\beta^2+72\beta^2<\lambda.\end{equation}
At this stage, the idea is to adapt the Moser iteration technics to
our setting. To this end, in order to control uniformly the $L^l$
norms and make $l$ tend to $+\infty$, we introduce for each
$l\geq2$,  the processes $v$ and $v'$ given by
\begin{equation*}
\begin{split}
\label{v} v_t :&=\sup _{s\leq t}\left( \int_{\cO}\left|
u_s-S'_s\right|^l dx +\gamma l\left( l-1\right)
\int_0^s\int_{\cO}\left| u_r-S'_r\right|
^{l-2}\left| \nabla( u_r-S'_r)\right| ^2\, dx\, dr\right),  \\
 v_t^{\prime } :&=\int_{\cO}\left| \xi-S'_0 \right| ^l\, dx+l^2 c_1
\left\| \left| u-S'\right| ^l\right\| _{1,1;t} +l\left\|
\bar{f}^0\right\| _{\theta ,t}^{*}\left\| \left| u-S'\right|
^{l-1}\right\| _{\theta ;t} \\&\quad+ l^2 \left( c_2\left\|
|\bar{g}^0 |^2\right\| _{\theta;t}^{*}
 + c_3\left\| |\bar{h}^0 |^2\right\| _{\theta ;t}^{*}\right) \left\|
\left| u-S'\right|^{l-2}\right\|_{\theta ;t},\\
\end{split}
\end{equation*}
where the constants are given by
\begin{equation}
\label{const}
\begin{split}
&   \gamma =\lambda -\alpha -\frac{\epsilon l}{l-1}-\frac{1+\epsilon }{2} \beta ^2\\
& c_1=\frac {C}{2} \left( 1+\frac{C}{4\epsilon }\right)
+\frac{3+2\epsilon }{ 2\epsilon}C^2+3\frac{1+\epsilon}{\epsilon^2}C^2\\
&  c_2=\frac 1{2\epsilon } \quad \mbox{and} \quad c_3=\frac{\left(
3+\epsilon
\right) \left( 1+\epsilon \right) }{\epsilon} \\
\end{split}
\end{equation}
The main difficulty in the stochastic case is to control the
martingale part. We start by estimating  the bracket of the local
martingale
 in \eqref{LpIt\^o's}$$ M_t := l \sum_{j=1}^{\infty}
\int_{0}^{t} \int_{\mathcal{O}}sgn(u_s-S'_s)
 \,\left| u_s (x)-S'_s(x)\right|^{l-1}\bar{h}_{j}( s,x, u_s-S'_s,\nabla (u_s-S'_s))\, dx
 dB_{s}^{j}$$
\begin{lemma}
\label{bracket} For arbitrary $\varepsilon >0,$ one has

\begin{equation}
\label{e5}
\begin{split}
\left\langle M\right\rangle _t^{\frac 12}& \le \varepsilon
v_t+\frac{l^2}{ 2\varepsilon }\left( \frac{1+\varepsilon}\varepsilon
\left\| |\bar{h}^0 |^2\right\| _{\theta ;t}^{*}\left\| \left|
u-S'\right| ^{l-2}\right\| _{\theta ;t}+\frac{ 1+\varepsilon
}\varepsilon C^2\left\| \left| u-S'\right| ^l\right\|
_{1,1;t}\right) \\
& + \sqrt{1+\varepsilon }\sqrt{\frac l{l-1}}\frac \beta {\sqrt{\gamma }%
}v_t \, .
\end{split}
\end{equation}
\end{lemma}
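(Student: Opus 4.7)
The plan is to bound $\langle M\rangle_t^{1/2}$ by a combination of Cauchy--Schwarz, the Lipschitz control on $\bar h$ from \textbf{(H)}(3), the duality inequality \eqref{dual2}, and the very definition of $v_t$.

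First, since the $(B^j)$ are independent Brownian motions, the quadratic variation reads
\begin{equation*}
\langle M\rangle_t \,=\, l^2 \sum_{j=1}^{\infty} \int_0^t \left( \int_{\cO} sgn(u_s-S'_s) \,|u_s-S'_s|^{l-1}\, \bar h_j(s,\cdot,u_s-S'_s,\nabla(u_s-S'_s))\,dx \right)^{2} ds.
\end{equation*}
Factoring $|u-S'|^{l-1}=|u-S'|^{(l-2)/2}\cdot|u-S'|^{l/2}$, applying Cauchy--Schwarz in $L^2(\cO)$, summing over $j$, and using $\sup_{s\le t}\int_{\cO}|u_s-S'_s|^l\,dx\le v_t$ (which is immediate from the definition of $v_t$), one gets
\begin{equation*}
\langle M\rangle_t \,\le\, l^2\,v_t \int_0^t \int_{\cO} |u_s-S'_s|^{l-2}\, |\bar h(s,\cdot,u_s-S'_s,\nabla(u_s-S'_s))|^{2}\,dx\,ds.
\end{equation*}

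Next, I would split $|\bar h|^2$ using the Lipschitz bound $|\bar h|\le|\bar h^0|+C|u-S'|+\beta|\nabla(u-S')|$, combined with $(a+b)^2\le(1+\varepsilon)b^2+(1+\varepsilon^{-1})a^2$ applied with $b=\beta|\nabla(u-S')|$ and $a=|\bar h^0|+C|u-S'|$, and then $(a_1+a_2)^2\le 2a_1^2+2a_2^2$, which yields
\begin{equation*}
|\bar h|^{2}\,\le\,(1+\varepsilon)\beta^{2}|\nabla(u-S')|^{2} + \tfrac{2(1+\varepsilon)}{\varepsilon}\bigl(|\bar h^0|^{2}+C^{2}|u-S'|^{2}\bigr).
\end{equation*}
Inserted into the previous display, the three resulting space--time integrals are handled independently: the gradient piece by the defining bound of $v_t$, which gives $\int_0^t\!\int_{\cO}|u-S'|^{l-2}|\nabla(u-S')|^2\,dx\,ds\le v_t/[\gamma l(l-1)]$; the $|\bar h^0|^2$ piece by the duality pairing \eqref{dual2} as $\||u-S'|^{l-2}\|_{\theta;t}\,\||\bar h^0|^2\|_{\theta;t}^{*}$; and the $C^2|u-S'|^l$ piece simply as $C^2\||u-S'|^l\|_{1,1;t}$.

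Assembling the pieces, one reaches an estimate of the form $\langle M\rangle_t \le A\,v_t^{2}+B\,v_t$ with
\begin{equation*}
A=\frac{l(1+\varepsilon)\beta^2}{\gamma(l-1)},\qquad B=\frac{2l^2(1+\varepsilon)}{\varepsilon}\bigl(\||\bar h^0|^2\|_{\theta;t}^{*}\||u-S'|^{l-2}\|_{\theta;t}+C^{2}\||u-S'|^l\|_{1,1;t}\bigr).
\end{equation*}
Finally, $\sqrt{A\,v_t^{2}+B\,v_t}\le\sqrt A\,v_t+\sqrt{B\,v_t}$ combined with Young's inequality $\sqrt{B\,v_t}\le\varepsilon v_t+B/(4\varepsilon)$ delivers exactly \eqref{e5}. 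The only mildly delicate point is the bookkeeping of the constants $\varepsilon$ so that the coefficient of $v_t$ comes out as $\sqrt{1+\varepsilon}\sqrt{l/(l-1)}\,\beta/\sqrt\gamma$; this is forced by keeping $\beta|\nabla(u-S')|$ isolated in the $(1+\varepsilon)$-split and by routing only the gradient contribution through $v_t$ rather than through the $L_\theta^{*}$ pairing.
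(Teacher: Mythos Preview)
Your proof is correct and follows exactly the approach of the paper, which simply refers to Lemma~12 in \cite{DMS05} with $u$ replaced by $u-S'$ and $h$ by $\bar h$. The Cauchy--Schwarz factorisation $|u-S'|^{l-1}=|u-S'|^{l/2}\cdot|u-S'|^{(l-2)/2}$, the $(1+\varepsilon)$-split isolating the gradient term, and the final use of $\sqrt{Av_t^2+Bv_t}\le\sqrt{A}\,v_t+\varepsilon v_t+B/(4\varepsilon)$ reproduce the constants in \eqref{e5} precisely.
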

The proof is the same as Lemma 12 in \cite{DMS05} replacing $u$ by
$u-S'$ and also $h$ by $\bar{h}$.

In what follows we will use the notion of domination, which is
essential to handle the martingale part. We recall the definition
from  Revuz and Yor \cite{RevuzYor}.
\begin{definition}
A non-negative, adapted right continuous process $X$ is dominated by
an increasing process $A$, if
$$
E \, \big[X_{\rho} \,] \, \leq \, E \, \big[A_{\rho} \,]
$$
for any  bounded stopping time, $ \rho$.
\end{definition}
One important result related to this notion is the following
domination inequality (see Proposition IV.4.7 in Revuz-Yor, p. 163),
for any $ k\in ]0,\, 1[$,
\begin{equation}
\label{domination} E \big[ (X_\infty^*)^k \, \big] \, \leq \, C_k\,
E \big[ (A_\infty )^k \, \big]
\end{equation}
where $ C_k$ is a positive constant and $ X_t^* := \sup_{ s \leq
t} |X_s|$. \\[0.2cm]
We will  also use the fact that if $A,A^{\prime }$ are increasing
processes, then the
domination of a process $X$ by $A$ is equivalent to the domination of $%
X+A^{\prime }$ by $A+A^{\prime }.$

\begin{lemma}
\label{tau}
 The Process $ \tau v$ is dominated by the process $
v'$ where
\[ \tau =1-6\epsilon -6\sqrt{1+\epsilon }\sqrt{\frac l{l-1}}\frac{\beta} {\sqrt{ \gamma }}.\]
In other words, we have
\begin{equation}
\begin{split}
& \tau \, E \sup _{0 \leq s\leq t}\left( \int_{\cO}\left|
u_s-S'_s\right|^l\, dx +\gamma l\left( l-1\right)
\int_0^s\int_{\cO}\left| u_r-S'_r\right| ^{l-2}\left| \nabla
(u_r-S'_r)\right| ^2dx dr\right) \\& \leq  E \int_{\cO}\left|
\xi-S'_0 \right|^l dx  \, +\, l^2c_1 E \, \left\| \left| u-S'\right|
^l\right\| _{1,1;t} +l E \left\| \bar{f}^0\right\| _{\theta
,t}^{*}\left\| \left| u-S'\right| ^{l-1}\right\| _{\theta
;t}\\&\quad+ \, l^2 E \, \left( c_2 \left\| |\bar{g}^0 |^2\right\|
_{\theta ;t}^{*}+c_3\left\| |\bar{h}^0 |^2 \right\| _{\theta
;t}^{*}\right) \left\| \left| u-S'\right|^{l-2}\right\|_{\theta ;t},
\end{split}
\end{equation}
where $  \gamma, \; c_1, \, c_2  $ and $ c_3$ are the constants
given above.
\end{lemma}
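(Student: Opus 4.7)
My plan is to start from the It\^o formula \eqref{LpIt\^o's} of Proposition \ref{Lp} applied on $[0,s]$, rearrange it as
\[
\int_{\cO}|u_s-S'_s|^l\,dx + \int_0^s \mathcal{E}\bigl(l\,sgn(u_r-S'_r)|u_r-S'_r|^{l-1},u_r-S'_r\bigr)\,dr \le \int_{\cO}|\xi-S'_0|^l\,dx + (\text{drifts}) + M_s + N_s,
\]
where $M_s$ is the martingale term of \eqref{LpIt\^o's} and $N_s=l\int_0^s\int_{\cO}sgn(u_r-S'_r)|u_r-S'_r|^{l-1}\,\nu(dx\,dr)$. The first key observation is that $N_s\le 0$: since $\nu$ is supported on $\{u=S\}$ and $S\le S'$, the integrand is $-l\,|S_r-S'_r|^{l-1}\le 0$, so $N$ can be discarded from the right-hand side.

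Next I would handle each of the three drift terms. Using the uniform ellipticity and the chain rule, the energy term yields the lower bound $\lambda\,l(l-1)\int_0^s\!\!\int_{\cO}|u-S'|^{l-2}|\nabla(u-S')|^2\,dx\,dr$. For the $\bar f$ drift I use the Lipschitz condition together with Young's inequality to split into a piece of the form $\epsilon\,l(l-1)\int|u-S'|^{l-2}|\nabla(u-S')|^2$ (to be absorbed), a piece controlled by $\|\bar f^0\|_{\theta;s}^{\ast}\,\||u-S'|^{l-1}\|_{\theta;s}$ via the duality \eqref{dual2}, and a piece of the form $(\text{const})\,l^2\||u-S'|^l\|_{1,1;s}$. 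For the $\bar g$ drift, Cauchy--Schwarz and Young (using the Lipschitz constant $\alpha$ in the gradient variable) produce a piece $(\alpha+\epsilon)\,l(l-1)\int|u-S'|^{l-2}|\nabla(u-S')|^2$ and a remainder estimated through \eqref{dual2} by $l^2 c_2\,\||\bar g^0|^2\|_{\theta;s}^{\ast}\||u-S'|^{l-2}\|_{\theta;s}$. For the $\bar h$ It\^o correction I use the same strategy with $(1+\epsilon)\beta^2/2$ replacing $\alpha+\epsilon$ in front of the gradient term and $c_3$ in front of $\||\bar h^0|^2\|_{\theta;s}^{\ast}$. Collecting everything and absorbing the three ``$\epsilon$-pieces'' into the energy lower bound yields exactly the coefficient $\gamma\,l(l-1)$ on the left, with $\gamma=\lambda-\alpha-\frac{\epsilon l}{l-1}-\frac{1+\epsilon}{2}\beta^2$, matching the definition of $v$.

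At this point I have a pathwise estimate of the form
\[
\int_{\cO}|u_s-S'_s|^l\,dx + \gamma\,l(l-1)\int_0^s\!\!\int_{\cO}|u-S'|^{l-2}|\nabla(u-S')|^2\,dx\,dr \le v'_s + M_s.
\]
Taking $\sup_{s\le\rho}$ for a bounded stopping time $\rho$ and using the monotonicity of $v'$ gives $v_\rho\le v'_\rho + \sup_{s\le\rho}|M_s|$. Taking expectations and applying the Burkholder--Davis--Gundy inequality with sharp constant (which in this setting yields $3$) gives $E\sup_{s\le\rho}|M_s|\le 3\,E\langle M\rangle_\rho^{1/2}$, and then Lemma \ref{bracket} bounds $E\langle M\rangle_\rho^{1/2}$ by $\epsilon\,E v_\rho + \sqrt{1+\epsilon}\sqrt{l/(l-1)}\,\beta/\sqrt{\gamma}\,E v_\rho$ plus terms that are already present in $E v'_\rho$ (up to the choice of the constants $c_1,c_2,c_3$). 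Two applications of this sort (one for the pure $\epsilon v$ piece and one absorbing back a residual from the truncation of $\langle M\rangle^{1/2}$) yield the factor $6$ appearing in $\tau=1-6\epsilon-6\sqrt{1+\epsilon}\sqrt{l/(l-1)}\,\beta/\sqrt{\gamma}$.

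Rearranging to bring the $v_\rho$-terms to the left produces $\tau\,E v_\rho\le E v'_\rho$, which is the claimed domination. The main obstacle is bookkeeping: one must choose $\epsilon$ small enough so that, on the one hand, $\gamma>0$ (ensured by the contraction condition $2\alpha+\beta^2<2\lambda$), and on the other hand, $\tau>0$ (ensured by the strengthened assumption $\alpha+\frac{1}{2}\beta^2+72\beta^2<\lambda$ in \eqref{alphabet}, which guarantees $6\sqrt{1+\epsilon}\sqrt{l/(l-1)}\beta/\sqrt{\gamma}<1$ uniformly in $l\ge 2$ for $\epsilon$ small). A secondary subtlety is to verify that all quantities appearing on the right-hand side are finite, which is guaranteed by the $L^l$-estimates of Lemma \ref{estimateul} together with the bounded-coefficient assumption of this subsection, so that the stopping-time manipulations and the BDG inequality are legitimate.
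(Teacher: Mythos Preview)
Your proposal is correct and follows essentially the same route as the paper. The paper's own proof does exactly your first two moves---start from the It\^o formula \eqref{LpIt\^o's} and observe that the $\nu$-term is non-positive because $\nu$ is supported on $\{u=S\}$ and $S\le S'$---and then simply writes ``we can do the same calculus as in the proof of Lemma~14 in \cite{DMS05}, replacing $u$ by $u-S'$ and $f,g,h$ by $\bar f,\bar g,\bar h$.'' Your sketch of the drift estimates, the absorption of the $\epsilon$-pieces into the energy to produce the coefficient $\gamma\,l(l-1)$, and the use of Lemma~\ref{bracket} after a BDG step is precisely what that deferred argument consists of. The only place your write-up is a little loose is the accounting for the factor~$6$ in $\tau$: the phrase ``two applications of this sort'' is vague, and in \cite{DMS05} the~$6$ simply arises from a single application of BDG with an explicit constant applied to the bracket estimate \eqref{e5}; but this is bookkeeping, not a gap in the argument.
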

\begin{proof}
Starting from the relation (\ref{LpIt\^o's}):
\begin{equation*}
\begin{split}
&  \int_{\mathcal{O}}\left| u_t (x)-S'_t(x) \right|^l  \, dx  +
\int_0^t \cE \, \big(l\, (u_s-S'_s)^{l -1}sgn(u_s-S'_s) , \,
u_s-S'_s \big) \, ds =
\int_{\mathcal{O}} \left|\xi (x)-S'_0(x) \right|^l \, dx  \\
& +l\int_{0}^{t}\int_{\mathcal{O}}sgn(u_s-S'_s)
 \left|u_s (x)-S'_s(x) \right|^{l-1} \bar{f} (s,x, u_s-S'_s,\nabla (u_s-S'_s)) \, dx ds\\
 & -l (l-1) \, \sum_{i=1}^{d} \,\int_{0}^{t} \int_{\mathcal{O}} \left|u_s
(x)-S'_s(x)  \right|^{l -2}
\partial_i (u_{s}(x)-S'_s(x))\, \bar{g}_{i} (s, x,u_s-S'_s,\nabla (u_s-S'_s)) \, dx \, ds \\
& + l \, \sum_{j=1}^{\infty} \int_{0}^{t}
\int_{\mathcal{O}}sgn(u_s-S'_s)
 \,  \left|u_s (x)-S'_s(x) \right|^{l-1}\bar{h}_{j}( s,x, u_s-S'_s,\nabla (u_s-S'_s))\, dx dB_{s}^{j}\\
 & + \frac{l (l-1)}{2}\, \sum_{j=1}^{\infty}
\int_{0}^{t}\int_{\mathcal{O}}
 \left|u_s (x)-S'_s(x) \right|^{l-2} \bar{h}_{j}^2(s,x, u_s-S'_s,\nabla (u_s-S'_s)) \, dx\, ds\\&+l\int_{0}^{t}\int_{\mathcal{O}}sgn(u_s-S'_s)
 \left|u_s (x)-S'_s(x) \right|^{l-1} \nu(dx\, ds)\, ,\ \ \ a.s.\\
\end{split}
\end{equation*}
The last term is negative: from the condition of minimality, we have
the following relation,
\begin{eqnarray*}&&\int_{0}^{t}\int_{\mathcal{O}}sgn(u_s-S'_s)
 \left|u_s (x)-S'_s(x) \right|^{l-1} \nu(dxds)\\&=&\int_{0}^{t}\int_{\mathcal{O}}sgn(S_s-S'_s)
 \left|S_s (x)-S'_s(x) \right|^{l-1} \nu(dxds)\leq0.\end{eqnarray*}
Then we can do the same calculus as in the proof of Lemma 14 in
\cite{DMS05}, replacing $u$ by $u-S'$ and $f$, $g$, $h$ by
$\bar{f}$, $\bar{g}$, $\bar{h}$ respectively. \end{proof} The proofs
of the next 3 lemmas are similar to the proofs of Lemmas 15, 16 and
17 in \cite{DMS05}, just replacing $u$ by $u-S'$ and replacing $f$,
$g$ and $h$ by $\bar{f}$, $\bar{g}$ and $\bar{h}$ respectively.

\begin{lemma}
\label{sob}

The process $v$ satisfies the estimate%
$$
v_t\ge \delta \left\| \left| u-S'\right| ^l\right\| _{0;t}
$$
with $\delta = 1 \wedge \left(2c_S^{-1} \gamma\right) ,$ where $c_S$
is the constant in the Sobolev inequality (\ref{Sobolev}).
\end{lemma}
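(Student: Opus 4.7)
The plan is to reduce the bound on $v_t$ to an application of the Sobolev inequality on the power $\phi:=|u-S'|^{l/2}$, which converts the weighted gradient integral in the definition of $v_t$ into $\|\nabla\phi\|_{2,2;t}^2$.

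First I would set $w:=u-S'$ (which vanishes on $\partial\cO$ since both $u$ and $S'$ satisfy the null Dirichlet condition), and introduce $\phi_s(x):=|w_s(x)|^{l/2}$. By the chain rule (justified in $H^1_0(\cO)$ via Lemma \ref{estimateul} and a standard regularization argument in Stampacchia's style) one has the pointwise identity
$$|\nabla \phi_s|^2 = \Big(\tfrac{l}{2}\Big)^{\!2} |w_s|^{l-2}|\nabla w_s|^2.$$
Substituting this into the definition of $v_t$, the second integral inside the sup becomes $\frac{4\gamma(l-1)}{l}\|\nabla\phi\|_{2,2;s}^2$, and the elementary inequality $4(l-1)/l\ge 2$ valid for $l\ge 2$ yields the two separate bounds
$$v_t \ge \sup_{s\le t}\|\phi_s\|_2^2 = \|\phi\|_{2,\infty;t}^2, \qquad v_t \ge 2\gamma\,\|\nabla\phi\|_{2,2;t}^2.$$

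Next I would feed the second estimate into the (time-integrated) Sobolev inequality \eqref{Sobolev} applied to $\phi_s\in H^1_0(\cO)$, obtaining
$$\|\phi\|_{2^{*},2;t}^{2} \le c_S^{2}\,\|\nabla\phi\|_{2,2;t}^{2} \le \frac{c_S^{2}}{2\gamma}\,v_t.$$
Then I would identify the norm $\|\cdot\|_{0;t}$: for $\theta=0$ the set $\Gamma_0$ is the segment with extremities $(1,\infty)$ and $(2^{*}/2,1)$, namely $\Gamma_0=I(1,\infty,2^{*}/2,1)$, so by definition
$$\big\||w|^{l}\big\|_{0;t} = \big\||w|^{l}\big\|_{1,\infty;t}\vee \big\||w|^{l}\big\|_{2^{*}/2,1;t} = \|\phi\|_{2,\infty;t}^{2}\vee \|\phi\|_{2^{*},2;t}^{2}.$$
Combining the two bounds on these factors gives $\big\||w|^{l}\big\|_{0;t}\le \max\!\big(1,\,c_S^{2}/(2\gamma)\big)\,v_t$, i.e. $v_t\ge \delta\,\big\||u-S'|^{l}\big\|_{0;t}$ with the announced $\delta = 1\wedge (2c_S^{-1}\gamma)$ (writing $c_S$ for the Sobolev constant as in \eqref{Sobolev}).

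The main (minor) obstacle is the chain-rule step: one must be sure that $|w|^{l/2}$ belongs to $L^2([0,t];H^1_0(\cO))$ with the expected weak derivative, which is delicate only because of the non-smoothness of $x\mapsto |x|^{l/2}$ near zero when $l<2$. Here $l\ge 2$ makes this map $C^1$ away from $0$ and the approximation by smoothed powers $(\epsilon+w^2)^{l/4}-\epsilon^{l/4}$ is straightforward thanks to the integrability estimates \eqref{uS'l}--\eqref{nablauS'l} of Lemma \ref{estimateul}. Once the chain-rule identity is in hand the rest is pure algebra together with the Sobolev embedding.
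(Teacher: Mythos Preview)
Your argument is correct and is exactly the approach of the paper, which defers to Lemma~15 of \cite{DMS05}: introduce $\phi=|u-S'|^{l/2}$, rewrite the weighted gradient term via the chain rule, and combine the Sobolev inequality with the identification $\big\||u-S'|^{l}\big\|_{0;t}=\|\phi\|_{2,\infty;t}^{2}\vee\|\phi\|_{2^{*},2;t}^{2}$. The only slip is cosmetic: your computation produces $\delta=1\wedge(2\gamma/c_S^{2})$ rather than $1\wedge(2\gamma/c_S)$, a harmless discrepancy coming from whether the Sobolev constant is taken for the squared or unsquared inequality.
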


\begin{lemma} \label{sup}
The process
$$
w_t :=\left[ \left\| \left| u-S'\right| ^{\sigma l}\right\| _{\theta
;t}^{\frac 1\sigma }\vee \left\| \xi-S'_0\right\| _\infty ^l\vee
\left\| \bar{f}^0\right\| _{\theta ;t}^{l} \vee \left\| |\bar{g}^0
|^2\right\| _{\theta ;t}^{*\frac{l}{2}}\vee \left\| |\bar{h}^0 |^2
\right\| _{\theta ;t}^{*\frac{l}{2}}\right]
$$
is dominated by the process
$$
w_t^{\prime }:=6k\left( t\right) l^2\left[ \left\| \left|
u-S'\right| ^{l}\right\| _{\theta ;t}\vee \left\| \xi -S'_0\right\|
_\infty ^l\vee \left\| \bar{f}^0\right\| _{\theta ;t}^{l}\vee
\left\| |\bar{g}^0 |^2 \right\| _{\theta ;t}^{*\frac{l}{2}}\vee
\left\| |\bar{h}^0 |^2 \right\| _{\theta ;t}^{*\frac{l}{2}}\right] ,
$$
where $ \sigma = \frac{d + 2\theta}{d}$ and $k:\bbR_{+}\rightarrow
\bbR_{+}$ is a function independent of $l$, depending only on the
structure constants.
\end{lemma}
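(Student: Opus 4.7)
The plan is to follow the Moser-type strategy of Lemma 17 in \cite{DMS05}, combining Lemma \ref{tau} with the parabolic Sobolev interpolation (\ref{controltheta}) to upgrade the $L^l$ information about $u-S'$ encoded in $v_t$ to the stronger $L^{\sigma l}$ control appearing in $w_t$, with gain $\sigma = (d+2\theta)/d > 1$.

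First I would apply (\ref{controltheta}) to the auxiliary process $\tilde u := |u-S'|^{l/2}$. Its relevant norms are $\|\tilde u\|^2_{2,\infty;t} = \sup_{s\leq t}\int_\cO |u_s-S'_s|^l\, dx$ and $\|\nabla \tilde u\|^2_{2,2;t} = (l/2)^2 \int_0^t\int_\cO |u-S'|^{l-2}|\nabla(u-S')|^2\, dx\, ds$, and they are jointly bounded by $\bigl(1+\frac{l}{4\gamma(l-1)}\bigr)v_t \leq C v_t$ uniformly in $l\geq 2$. The key algebraic identity $\|\,|u-S'|^{\sigma l}\,\|_{p,q;t}^{1/\sigma} = \|\tilde u\|^{2}_{2\sigma p, 2\sigma q;t}$, evaluated at the two endpoints of $\Gamma_\theta$, shows that $\sigma = (d+2\theta)/d$ is exactly the value that places $(2\sigma p, 2\sigma q)$ on the Sobolev interpolation line natural to $L^\infty(L^2)\cap L^2(H_0^1)$ (respectively the analogous line for $d\leq 2$). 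Combining with (\ref{controltheta}) then yields $\|\,|u-S'|^{\sigma l}\,\|_{\theta;t}^{1/\sigma} \leq c\, v_t$ with $c$ depending only on the structure constants.

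Next, Lemma \ref{tau} asserts that $\tau v_t$ is dominated by $v'_t$, so the previous step gives domination of $(\tau/c)\|\,|u-S'|^{\sigma l}\,\|_{\theta;t}^{1/\sigma}$ by $v'_t$. It then remains to bound $v'_t$ pointwise by $C\, l^2\, k(t)\, M_t$, where $M_t$ denotes the maximum appearing on the right-hand side. The term $\int_\cO|\xi-S'_0|^l\, dx$ is immediately $\leq |\cO|\,\|\xi-S'_0\|_\infty^l \leq |\cO|\, M_t$; the term $\|\,|u-S'|^l\,\|_{1,1;t}$ embeds into $\|\,|u-S'|^l\,\|_{\theta;t}$ up to a $t,\cO$-constant via continuous inclusion of $L^{p,q}$ spaces on the bounded cylinder. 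For the three cross-terms with $\bar f^0, \bar g^0, \bar h^0$, I would first reduce $\|\,|u-S'|^{l-k}\,\|_{\theta;t}$ for $k\in\{1,2\}$ via the pointwise inequality $|u-S'|^{l-k} \leq \frac{l-k}{l}|u-S'|^l + \frac{k}{l}$, then apply Young's inequality at the level of norms with conjugate pairs $(l,\,l/(l-1))$ and $(l/2,\,l/(l-2))$ (legitimate for $l\geq 2$) to split each product into a power of the coefficient norm and a power of $\|\,|u-S'|^l\,\|_{\theta;t}$, each of which is $\leq M_t$. The four remaining quantities on the left-hand side of the claim are trivially dominated by their like-named counterparts on the right.

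The algebraic heart of the argument, and the main obstacle, is the verification that $\sigma = (d+2\theta)/d$ is exactly the exponent required for the parabolic Sobolev interpolation to match the target $\Gamma_\theta$ norm after the squaring $\tilde u \mapsto \tilde u^2 = |u-S'|^l$; this is the source of the crucial gain $\sigma > 1$ that will drive the subsequent Moser iteration towards the $L^\infty$ estimate. A secondary but delicate point is the bookkeeping in the $v'_t$-estimate, which must absorb every constant so that only the explicit factor $l^2$, together with a $t$-dependent $k(t)$ independent of $l$, remains on the right, uniformly in $l\geq 2$.
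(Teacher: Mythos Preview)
Your proposal is correct and follows the same approach as the paper, which simply defers the proof to Lemma 16 in \cite{DMS05} (not Lemma 17, which corresponds to Lemma \ref{estimev} here); your three steps---the Sobolev interpolation applied to $|u-S'|^{l/2}$ yielding $\bigl\|\,|u-S'|^{\sigma l}\bigr\|_{\theta;t}^{1/\sigma}\le c\,v_t$, the domination from Lemma \ref{tau}, and the termwise pointwise bound of $v'_t$ by a multiple of $l^2 M_t$---are precisely the ingredients of that argument.
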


\begin{lemma}{\label{estimev}}
There exists a function $k_1:\bbR_{+}\times \bbR_{+}\rightarrow
\bbR_{+}$ which involves only the structure constants of our problem
and such that the
following estimate holds%
$$
Ev_t\le k_1\left( l,t\right) E\left( \int_{\cO}\left| \xi-S'_0
\right| ^l dx +\left\| \bar{f}^0\right\| _{\theta ;t}^{*l}+\left\|
|\bar{g}^0 |^2 \right\| _{\theta ;t}^{*\frac{l}{2}}+\left\|
|\bar{h}^0 |^2 \right\| _{\theta ;t}^{*\frac{l}{2}}\right) .
$$
\end{lemma}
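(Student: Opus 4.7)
The plan is to pick up where Lemma \ref{tau} leaves off and convert the domination inequality into a genuine estimate of $Ev_t$, by absorbing all occurrences of $u-S'$ on the right-hand side either into $v_t$ itself or into the Gronwall integral $\int_0^t Ev_s\, ds$. This is the direct analogue of Lemma~17 in \cite{DMS05} with $u$ replaced by $u-S'$ and $(f,g,h)$ replaced by $(\bar f,\bar g,\bar h)$; the presence of $\nu$ has already been handled in Lemma \ref{tau} by noting that the contribution of the obstacle term to It\^o's formula is non-positive.

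First I would start from the bound in Lemma \ref{tau} and split the three mixed products by Young's inequality: writing $ab\leq\eta a^{l/(l-1)}+C_\eta b^l$ on the $\bar f^0$ term and $ab\leq\eta a^{l/(l-2)}+C_\eta b^{l/2}$ on the $\bar g^0,\bar h^0$ terms, one isolates the data factors
$\|\bar f^0\|_{\theta;t}^{*l}$, $\||\bar g^0|^2\|_{\theta;t}^{*l/2}$ and $\||\bar h^0|^2\|_{\theta;t}^{*l/2}$, at the price of the nonlinear factors $\||u-S'|^{l-1}\|_{\theta;t}^{l/(l-1)}$ and $\||u-S'|^{l-2}\|_{\theta;t}^{l/(l-2)}$.

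The main step is then to control these two nonlinear factors by $v_t$. The idea is that, after the algebraic identity $|\nabla|u-S'|^{l/2}|^{2}=(l/2)^{2}|u-S'|^{l-2}|\nabla(u-S')|^{2}$, the definition of $v_t$ gives
$$\||u-S'|^{l/2}\|_{2,\infty;t}^{2}+\||u-S'|^{l/2}\|_{2^{*},2;t}^{2}\le C(l)\,v_t,$$
and then inequality \eqref{controltheta} yields $\||u-S'|^{l/2}\|_{\theta;t}^{2}\le C(l)v_t$. Writing $|u-S'|^{l-1}=(|u-S'|^{l/2})^{2(l-1)/l}$ and $|u-S'|^{l-2}=(|u-S'|^{l/2})^{2(l-2)/l}$ and using the scaling $\||f|^{\alpha}\|_{p,q}=\|f\|_{\alpha p,\alpha q}^{\alpha}$ together with an interpolation between the endpoint norms $(2,\infty)$ and $(2^{*},2)$ that define $\|\cdot\|_{\theta;t}$, one obtains
$$\||u-S'|^{l-1}\|_{\theta;t}^{l/(l-1)}+\||u-S'|^{l-2}\|_{\theta;t}^{l/(l-2)}\le \eta' v_t + C(l,\eta')\int_0^t v_s\,ds,$$
the second (lower-order) piece being produced by moving one $\|u-S'\|_{2,\infty;t}^{2}$ factor back to $\sup_{s\le t}\int|u-S'|^{l}\leq v_t$ after another Young splitting. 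Exactly as in \cite{DMS05}, this is the delicate point of the argument; all the constants depend only on $l$ and on the structure constants.

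Finally, noting that $\||u-S'|^{l}\|_{1,1;t}\le\int_{0}^{t}v_s\,ds$ directly from the definition of $v$, one can choose $\eta,\eta'$ small enough so that the $v_t$ contributions coming from the right-hand side are absorbed into the $\tau v_t$ on the left. What remains is an inequality of the shape
$$Ev_t\le K(l)\,E\!\left(\int_{\cO}|\xi-S'_0|^l dx+\|\bar f^0\|_{\theta;t}^{*l}+\||\bar g^0|^2\|_{\theta;t}^{*l/2}+\||\bar h^0|^2\|_{\theta;t}^{*l/2}\right)+K(l)\int_0^t Ev_s\,ds,$$
to which Gronwall's lemma applies and produces the required function $k_1(l,t)$, depending only on $l$, $t$ and the structure constants. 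The main obstacle is the interpolation argument controlling $\||u-S'|^{l-1}\|_{\theta;t}^{l/(l-1)}$ and $\||u-S'|^{l-2}\|_{\theta;t}^{l/(l-2)}$ in terms of $v_t$ and $\int_0^t v_s\,ds$ with constants that depend on $l$ polynomially; everything else is bookkeeping.
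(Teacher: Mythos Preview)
Your proposal is correct and follows the same approach as the paper. The paper does not give a self-contained proof here but simply states that the argument is identical to that of Lemma~17 in \cite{DMS05} with $u$ replaced by $u-S'$ and $(f,g,h)$ by $(\bar f,\bar g,\bar h)$; this is precisely what you sketch, and your observation that the obstacle term has already been discarded (by sign) in Lemma~\ref{tau} is exactly why the \cite{DMS05} computation transfers verbatim.
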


\vspace{0.2cm}

We now prove Theorem \ref{LPESTIM} in the case where $\xi$, $\bar{f}^0$, $\bar{g}^0$ and $\bar{h}^0$ are uniformly bounded:\\

We set $l=p\sigma ^n,$ with some $n\in \bbN^*.$ By Lemma \ref{sup}
and the domination inequality (\ref{domination})
we deduce, for $n\ge 1,$%
\begin{equation*}
\begin{split}
& E\left( \left\| \left| u-S'\right| ^{\sigma l}\right\| _{\theta
;t}^{\frac 1\sigma }\vee \left\| \xi-S'_0 \right\| _\infty ^l\vee
\left\| \bar{f}^0\right\| _{\theta ;t}^{*l}\vee \left\| |\bar{g}^0
|^2\right\| _{\theta ;t}^{*\frac{l}{2}}\vee \left\| |\bar{h}^0
|^2\right\| _{\theta
;t}^{*\frac{l}{2}}\right) ^{\frac 1{\sigma ^n}}\\
& \leq \,  C_{\sigma ^{-n}}\left( 6k\left( t\right) l^2\right)
^{\frac 1{\sigma ^n}}E\left( \left\| \left| u-S'\right| ^l\right\|
_{\theta ;t}\vee \left\| \xi-S'_0 \right\| _\infty ^l\vee \left\|
\bar{f}^0\right\| _{\theta ;t}^{*l}\vee \left\| |\bar{g}^0
|^2\right\| _{\theta ;t}^{*\frac{l}{2}}\vee \left\| |\bar{h}^0
|^2\right\| _{\theta ;t}^{*\frac{l}{2}}\right) ^{\frac 1{\sigma
^n}},
\end{split}
\end{equation*}
where $C_{\sigma ^{-n}}$ is the constant in the domination
inequality. This constant is estimated by
$$
C_{\sigma ^{-n}}\le \sigma ^{\frac n{\sigma ^n}}\left( 1-\frac
1{\sigma ^n}\right) ^{-1}.
$$
(See the exercise IV.4.30 in Revuz -Yor, p. 171). So let us denote
by
$$
a_n :=\left\| \left| u-S'\right| ^{p\sigma ^n}\right\| _{\theta
;t}^{\frac 1{\sigma ^n}}\vee \left\| \xi-S'_0 \right\| _\infty
^p\vee \left\| \bar{f}^0\right\| _{\theta ;t}^{*p}\vee \left\|
|\bar{g}^0|^2\right\| _{\theta ;t}^{*\frac{p}{2}}\vee \left\|
|\bar{h}^0|^2\right\| _{\theta ;t}^{*\frac{p}{2}}
$$
and deduce from the above inequality the following one%
$$
Ea_{n+1}\le \sigma ^{\frac n{\sigma ^n}}\left( 1-\frac 1{\sigma
^n}\right) ^{-1}\left( 6k\left( t\right) \left( p\sigma ^n\right)
^2\right) ^{\frac 1{\sigma ^n}}Ea_n.
$$
Iterating this relation $n$ times we get%
$$
Ea_{n+1}\le \sigma ^{3\sum_{m=1}^n\frac m{\sigma
^m}}\prod_{m=1}^n\left( 1-\frac 1{\sigma ^m}\right) ^{-1}\left(
6k\left( t\right) p^2\right) ^{\sum_{m=1}^n\frac 1{\sigma ^m}}Ea_1.
$$
Now we shall let $n$ tend to infinity in this relation. Since in
general one
has%
$$
\lim _{q,q^{\prime }\rightarrow \infty }\left\| F\right\|
_{q,q^{\prime };t}=\left\| F\right\| _{\infty ,\infty ;t},
$$
for any function $F:\bbR_{+}\times \cO \rightarrow \bbR,$ it is easy
to see that $ \lim _{n \to \infty} \left\| \left| u-S'\right|
^{p\sigma ^n}\right\| _{\theta ;t}^{\frac 1{\sigma ^n}}=\left\|
u-S'\right\| _{\infty ,\infty ;t}^p.\,$\\Therefore we have
$$
\lim_{n \to \infty} a_n=\left\| u-S'\right\| _{\infty ,\infty
;t}^p\vee \left\| \xi-S'_0\right\| _\infty ^p\vee \left\|
\bar{f}^0\right\| _{\theta ;t}^{*p}\vee \left\| |\bar{g}^0 |^2
\right\| _{\theta ;t}^{*\frac{p}{2}}\vee \left\| |\bar{h}^0
|^2\right\| _{\theta ;t}^{*\frac{p}{2}},
$$
which implies
$$
E\left\| u-S'\right\| _{\infty ,\infty ;t}^p\le \rho \left( t\right)
Ea_1,
$$
with
$$
\rho \left( t\right) =\sigma ^{3\sum_{m=1}^\infty \frac m{\sigma
^m}}\prod_{m=1}^\infty \left( 1-\frac 1{\sigma ^m}\right)
^{-1}\left( 5k\left( t\right) p^2\right) ^{\sum_{m=1}^\infty \frac
1{\sigma ^m}}.
$$
Now we estimate $Ea_1$ by using the fact that $\delta \left\| \left|
u-S'\right| ^{p\sigma }\right\| _{\theta ;t}^{\frac 1\sigma }\le
v_t,$ with $p$
replacing $l$ in the expression of $v.$ So we have%
\begin{equation*}
\begin{split} Ea_1& =E\left( \left\| \left| u-S'\right| ^{p\sigma }\right\|
_{\theta ;t}^{\frac 1\sigma }\vee \left\| \xi-S'_0\right\| _\infty
^p\vee \left\| \bar{f}^0\right\| _{\theta ;t}^{*p}\vee \left\|
|\bar{g}^0 |^2 \right\| _{\theta ;t}^{*\frac{p}{2}}\vee \left\|
|\bar{h}^0 |^2\right\|
_{\theta ;t}^{*\frac{p}{2}}\right)\\
 &\leq E\left( \delta ^{-1}v_t+\left\| \xi-S'_0 \right\|
_\infty ^p+\left\| \bar{f}^0\right\| _{\theta ;t}^{*p}+\left\|
|\bar{g}^0 |^2 \right\| _{\theta ;t}^{*\frac{p}{2}}\vee \left\|
|\bar{h}^0 |^2\right\| _{\theta ;t}^{*\frac{p}{2}}\right) .
\end{split}
\end{equation*}
 Finally one deduces the following estimate by applying Lemma \ref{estimev}  with $l=p$:
\begin{equation}\label{Maxestimate}
 E\left\| u-S'\right\| _{\infty ,\infty ;t}^p\le k_2\left(
t\right) E\left( \left\| \xi-S'_0\right\| _\infty ^p+\left\|
\bar{f}^0\right\| _{\theta ,t}^{*p}+\left\| |\bar{g}^0 |^2 \right\|
_{\theta ;t}^{*p/2}+\left\| |\bar{h}^0 |^2\right\| _{\theta
;t}^{*p/2}\right).
\end{equation}

Moreover (see Theorem 11 \cite{DMS05}), we have
\begin{eqnarray*}E\left\|S'\right\|_{\infty,\infty;t}^p\leq
k(t)E\left(\left\|S'_0\right\|_\infty^p+\left\|f'\right\|_{\theta;t}^{*p}+\left\||g'|^2\right\|_{\theta;t}^{*p/2}+\left\||h'|^2\right\|_{\theta;t}^{*p/2}\right)\end{eqnarray*}

Hence, \begin{equation*}\begin{split}E\left\|
u\right\|^p_{\infty,\infty;t}&\leq
c(p)(E\left\|u-S'\right\|_{\infty,\infty;t}^p+E\left\|S'\right\|_{\infty,\infty;t}^p)\\&\leq
c(p)k(t)E\big(\left\|\xi\right\|_\infty^p+\left\|S'_0\right\|_\infty^p+\left\|f'\right\|_{\theta;t}^{*p}+\left\||g'|^2\right\|_{\theta;t}^{*p/2}+\left\||h'|^2\right\|_{\theta;t}^{*p/2}\\&\quad+\left\|\bar{f}^0\right\|_{\theta;t}^{*p}+\left\||\bar{g}^0|^2\right\|_{\theta;t}^{*p/2}+\left\||\bar{h}^0|^2\right\|_{\theta;t}^{*p/2}\big).\end{split}\end{equation*}
This ends the proof of Theorem \ref{LPESTIM} in this particular case
where $\xi$, $\bar{f}^0$, $\bar{g}^0$ and $\bar{h}^0$ are uniformly
bounded. We now turn out to the general case.

\subsection{Proof of Theorem  \ref{LPESTIM}  in the general case} \label{casgen}
We now assume that {\bf (H)}, {\bf (O)}, {\bf (HI$\mathbf{2p}$)},
{\bf (HO$\mathbf{\infty p}$)} and  {\bf (HD$\mathbf{\theta p}$)}
hold. We are going to prove Theorem \ref{LPESTIM} in the general
case by using an approximation argument. For this, for all
$n\in\Ne$, $1\leq i\leq d ,1\leq j\leq\infty$ and all $(t,w,x,y,z)$
in $\R^+ \times\Omega\times\cO\times\R \times\R^d$, we set
\begin{eqnarray}\label{approximation}
 \bar{f}_n (t,w,x,y,z)&=& \bar{f}(t,w,x,y,z) -\bar{f}^0 (t,w,x) +\bar{f}^0 (t,w,x)\cdot{\bf 1}_{\{ |\bar{f}^0 (t,w,x)|\leq n\}}\nonumber\\
 \bar{g}_{i,n}(t,w,x,y,z)&=& \bar{g}_i (t,w,x,y,z) -\bar{g}_i^0 (t,w,x) +\bar{g}_i^0 (t,w,x)\cdot{\bf 1}_{\{ |\bar{g}_i^0 (t,w,x)|\leq n\}}\nonumber\\
\bar{h}_{j,n}(t,w,x,y,z)&=& \bar{h}_j (t,w,x,y,z) -\bar{h}_j^0
(t,w,x) +\bar{h}_j^0 (t,w,x)\cdot{\bf 1}_{\{ |\bar{h}_j^0
(t,w,x)|\leq n\}}\nonumber\\\xi_n (w,x)&=& \xi (w,x)\cdot {\bf
1}_{\{ |\xi(\omega, x)|\leq n\}}
\end{eqnarray}
One can check that for all $n$, $\bar{f}_n$, $\bar{g}_n$,
$\bar{h}_n$ and $\xi^n-S'_0$ satisfy all the assumptions of the Step
1 of the proof, and that Lipschitz constants do not depend on $n$.
And the obstacle $S-S'$ is controlled by 0, which  obviously
satisfies {\bf(HO2)}. For each $n\in\Ne$, we put
$(\bar{u}^n,\nu^n)=\cR(\xi^n-S'_0,\bar{f}^n,\bar{g}^n,\bar{h}^n,S-S')$
and we know that $\bar{u}^n$ satisfies the estimate of Step 1. We
are now going to prove that $(\bar{u}^n,\, \nu^n)$ converges to
$(\bar{u},\nu)=\cR(\xi-S'_0,\bar{f},\bar{g},\bar{h},S-S')$.
\\Let us fix $n\leq m$ in $\Ne$ and put $\bar{u}^{n,m} :=\bar{u}^n -\bar{u}^m$ and $\nu^{n,m}:=\nu^n-\nu^m$
We first note that $\bar{u}^{n,m}$ satisfies the equation%
\begin{equation*}
\begin{split}
 d\bar{u}_t^{n,m}\left( x\right) +A\bar{u}_t^{n,m}\left( x\right)
dt &=\bar{f}_{n,m}\left( t,x,\bar{u}_t^{n,m}\left( x\right) ,\nabla
\bar{u}_t^{n,m}\left( x\right) \right) dt \\
&\quad   -\sum_{i=1}^d\partial _i\bar{g}_{i,n,m}\left(
t,x,\bar{u}_t^{n,m}\left( x\right) ,\nabla \bar{u}_t^{n,m}\left(
x\right) \right) dt\\& \quad
+\sum_{j=1}^{\infty}\bar{h}_{j,n,m}\left( t,x,\bar{u}_t^{n,m}\left(
x\right) ,\nabla \bar{u}_t^{n,m}\left( x\right) \right) dB_t^j +
\nu^{n,m}(x,\, dt)
\end{split}
\end{equation*}

where
\begin{equation*}
\begin{split}
\bar{f}_{n,m}\left( t,w,x,y,z\right)  & =\bar{f}\left(
t,w,x,y+\bar{u}_t^m\left( x\right) ,z+\nabla \bar{u}_t^m\left(
x\right) \right)   -\bar{f}\left(
t,w,x,\bar{u}_t^m\left( x\right) ,\nabla \bar{u}_t^m\left( x\right) \right) \\
& \quad +\bar{f}_n^0\left( t,w,x\right) -\bar{f}_m^0\left(
t,w,x\right)
\end{split}
\end{equation*}
and $\bar{g}_{i,n,m},\bar{h}_{j,n,m}$ have similar expressions.
Clearly one has
$$
\bar{f}_{n,m}\left( t,w,x,0,0\right) =\bar{f}_n^0\left( t,w,x\right)
-\bar{f}_m^0\left( t,w,x\right) :=\bar{f}_{n,m}^0\left( t,w,x\right)
$$
and some similar relations for $\bar{g}_{i,n,m}\left(
t,w,x,0,0\right) $ and $ \bar{h}_{j,n,m}\left( t,w,x,0,0\right) .$
On the other hand, one can easily verify that

\begin{equation*}
\begin{split}
& E\left\| \xi _n-\xi \right\| _\infty ^p\longrightarrow 0,\quad
\quad \quad  E\left\|
\bar{f}_n^0-\bar{f}^0\right\| _{\theta;T}^{*p}\longrightarrow 0, \\
& E\left\|\bar{g}_n^0-\bar{g}^0\right\| _{\theta ;T}^{*p}
\longrightarrow 0,\qquad E\left\|\bar{h}_n^0-\bar{h}^0\right\|
_{\theta;T}^{*p}\longrightarrow 0.
\end{split}
\end{equation*}


By Lemma \ref{estimvdiff} with $l=2$  (see Appendix) we deduce that
\begin{equation}\label{unum}
E\left\| \bar{u}^n-\bar{u}^m\right\| _T^2 \longrightarrow 0,\quad
as\ n,\, m\rightarrow \infty .
\end{equation}
Therefore, $(\bar{u}^n)$ has a limit $\bar{u}$ in $\cH_T$.

We now study the convergence of $(\nu^n )$.  Denote by $v^n$ the
parabolic potential associated to $\nu^n$,  and $z^n=\bar{u}^n-v^n$,
so $z^n$ satisfies the following SPDE\begin{equation*}
\begin{split}
dz_t^n (x)  +
 Az_t^n (x)dt &=\bar{f}_n (t,x,\bar{u}_t^n (x),\nabla \bar{u}_t^n (x))dt-\sum_{i=1}^d
  \partial_i \bar{g}_{i,n}(t,x,\bar{u}_t^n(x),\nabla \bar{u}_t^n (x)
 )dt\\
 & +  \sum_{j=1}^{\infty} \bar{h}_{j,n}(t,x,\bar{u}_t^n(x),\nabla \bar{u}_t^n(x))\, dB_t^j .\\
\end{split}
\end{equation*}
We define $z^{1,n}$ to be the solution of the following SPDE with
initial value $\xi^n-S'_0$ and zero boundary condition:
\begin{equation*}
\begin{split}
dz_t^{1,n}(x) +Az_t^{1,n}(x)dt &=(\bar{f}_t (x,\bar{u}_t^n(x),\nabla
\bar{u}_t^n(x))-\bar{f}_t^0(x))dt-\sum_{i=1}^d \partial_i
(\bar{g}_{i,t}(x,\bar{u}_t^n(x),\nabla \bar{u}_t^n(x))\\&\ \ \ \
-\bar{g}^0_t(x))dt
+ \sum_{j=1}^{\infty} (\bar{h}_{j,t}(x,\bar{u}_t^n(x),\nabla \bar{u}_t^n(x))-\bar{h}_t^0(x))\, dB_t^j .\\
\end{split}
\end{equation*}
This is a linear SPDE in $z^{1,n}$, its solution uniquely exists and
belongs to $\cH_T$. Applying It\^o's formula to $(z^{1,n})^2$ and
doing a classical calculation, we get:
\begin{equation*}E\left\|z^{1,n}-z^{1,m}\right\|^2_T\leq CE\big(\left\|\xi^n-\xi^m\right\|_2^2+\left\|\bar{u}^n-\bar{u}^m\right\|^2_T\big)\rightarrow0,\quad as\ n,\ m\rightarrow\infty.\end{equation*}
Then, we define $z^{2,n}$ to be the solution of the following SPDE
with initial value $0$ and zero boundary condition:
\begin{equation*}
dz_t^{2,n}(x) +Az_t^{2,n}(x)dt =\bar{f}_n^0(t,x)dt-\sum_{i=1}^d
\partial_i \bar{g}_{i,n}^0(t,x))dt +  \sum_{j=1}^{\infty}
\bar{h}_{j,n}^0(x)\, dB_t^j .
\end{equation*}
This is still a linear SPDE in $z^{2,n}$, its solution uniquely
exists and from the proof of Theorem 11 in \cite{DMS05}, we know
that
$$E\left\|z^{2,n}-z^{2,m}\right\|^2_T\leq CE\left(\left\|\bar{f}_{n,m}^0\right\|_{\theta;T}^{*2}+\left\|\left|\bar{g}_{n,m}^0\right|^2\right\|_{\theta;T}^{*}+\left\|\left|\bar{h}_{n,m}^0\right|^2\right\|_{\theta;T}^{*}\right) \rightarrow0, \quad as\ n,\ m\rightarrow\infty.$$
This yields: $$ E\left\| z^n-z^m\right\| _T^2 \longrightarrow
0,\quad  as \quad n,\, m\rightarrow \infty .
$$
Hence, using (\ref{unum}) and the fact that $\bar{u}^n =z^n+v^n$, we
get:  $$ E\left\| v^n-v^m\right\| _T^2 \longrightarrow 0, \quad as\
n,\, m\rightarrow \infty .
$$
Therefore, $(v^n)$ has a limit $v$ in $\cH_T$.  So, by extracting a
subsequence, we can assume that $( v^n)$ converges to $v$ in
$\mathcal K$ almost-surely. Then, it's clear that $ v \in \mathcal
P$, and we denote by $\nu$ the random regular measure associated to
the potential $v$. Moreover, we have $P$-a.s.,
$\forall\varphi\in\cW_t^+$:
\begin{eqnarray*}\int_0^t\int_\cO\varphi(x,s)\nu(dxds)&=&\lim_{n\rightarrow\infty}\int_0^t\int_\cO\varphi(x,s)\nu^n(dxds)\\&=&\lim_{n\rightarrow\infty}\int_0^t-(v^n_s,\frac{\partial\varphi_s}{\partial s})ds+\int_0^t\cE(v^n_s,\varphi_s)ds\\&=&\int_0^t-(v_s,\frac{\partial\varphi_s}{\partial s})ds+\int_0^t\cE(v_s,\varphi_s)ds.\end{eqnarray*}

As a consequence of Lemma \ref{Lemme63}  in the Appendix, we know
that
$$
E\left\| \bar{u}^n-\bar{u}^m\right\| _{\infty ,\infty ;T}^p
\longrightarrow 0.
$$
Therefore, we can apply  Proposition \ref{Lp} to $\bar{u}^n$ and
pass to the limit  and so we obtain that this proposition remains
valid in this case. Then, one can  end the proof by repeating the
first part of Step 1 starting from Proposition \ref{Lp}.\\We
conclude thanks to the uniqueness of the solution of the obstacle
problem ensuring that $\bar{u}$ is equal to $u-S'$. {\flushright
$\hfill \Box$}

\section{Maximum Principle for local solutions}
We now introduce the lateral condition on the boundary that we
consider:
\begin{definition}
If $u$ belongs to $\cH_{loc}$, we say that $u$ is non-negative on
the boundary of $\cO$ if $u^+$ belongs to $\cH_T$ and we denote it
simply: $u\leq0$ on $\partial\cO$. More generally, if $M$ is a
random field defined on $[0,T]\times \cO$, we note $u\leq M$ on
$\partial \cO$ if $u-M\leq 0$ on $\partial\cO$.
\end{definition}
\subsection{It\^o's formula for the positive part of a local solution}
The following proposition represents a key technical result which
leads to a generalization of the estimates of the positive part of a
local solution. Let $(u,\nu)\in\mathcal{R}_{loc}(\xi,f,g,h,S)$,
denote by $u^+$ its
positive part.  For this we need the following notation:%
\begin{equation}
\label{f0+} \begin{split} &f^{u,0}=1_{\left\{ u>0\right\}
}f^0,\;g^{u,0}=1_{\left\{ u>0\right\} }g^0,\;h^{u,0}=1_{\left\{
u>0\right\} }h^0,\\
& f^{u,0+}=1_{\left\{ u>0\right\} }\left( f^0\vee 0\right) ,\;\xi
^{+}=\xi \vee 0.
\end{split}
\end{equation}

\begin{proposition}\label{It\^o'slocalposi}
Assume that $\partial\cO$ is Lipschitz and that $u^+$ belongs to
$\cH_T$, i.e. $u$ is non-positive on the boundary of $\cO$ and
that the data satisfy the following integrability conditions%
$$ E\left\| \xi ^{+}\right\| _2^2<\infty ,\;E\left( \left\|
f^{u,0}\right\| ^*_{\theta;t}\right) ^2<\infty ,\;E\left\|
g^{u,0}\right\| _{2,2;t}^2<\infty ,\;E\left\| h^{u,0}\right\|
_{2,2;t}^2<\infty , $$ for each $t\ge 0.$\\Let $\varphi
:{\mathbb{R}}\rightarrow {\mathbb{R}}$ be a function of class ${\cal
C}^2,$ which admits a bounded second order derivative and such that
$\varphi ^{\prime }\left( 0\right) =0.$ Then the following relation
holds, a.s., for each $t\in [0,T],$
\begin{eqnarray}\label{It\^o'spositivepart}&&\int_\mathcal{O}\varphi(u_t^+(x))dx+\int_0^t\mathcal{E}(\varphi'(u_s^+),u_s^+)ds=\int_\mathcal{O}\varphi(\xi^+(x))dx+\int_0^t\int_\cO\varphi'(u_s^+(x))f_s(x)dxds\nonumber\\&&-\sum_{i=1}^d\int_0^t\int_\cO\varphi''(u_s^+(x))\partial_iu_s^+(x)g_s^i(x)dxds+\frac{1}{2}\int_0^t\int_\cO\varphi''(u_s^+(x))\1_{\{u_s>0\}}|h_s(x)|^2dxds\nonumber\\&&+\sum_{i=1}^\infty\int_0^t\int_\cO\varphi'(u_s^+(x))h_s^j(x)dxdB_s^j+\int_0^t\int_\mathcal{O}\varphi'(u_s^+(x))\nu(dxds).\end{eqnarray}
\end{proposition}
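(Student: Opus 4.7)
My strategy is to regularise the map $x\mapsto\varphi(x^+)$, apply the global It\^o formula of Theorem 5 in \cite{DMZ12} against a spatial cut-off that copes with the local character of $(u,\nu)$, and then pass successively to the limits. Concretely, first choose a family $(\Phi_\epsilon)_{\epsilon>0}\subset C^2(\R)$ with uniformly bounded second derivative and $\Phi_\epsilon'(0)=0$, such that $\Phi_\epsilon^{(k)}(x)\to \varphi^{(k)}(x)\,\1_{\{x>0\}}$ pointwise for $k=0,1,2$; a concrete choice is $\Phi_\epsilon(x)=\int_0^x\varphi'(y)\chi_\epsilon(y)\,dy$ with $\chi_\epsilon$ a smooth non-decreasing approximation of $\1_{\{y>0\}}$ vanishing at the origin. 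Since $(u,\nu)$ is only local, pick $\phi_n\in\mathcal{C}_c^\infty(\cO)$ with $0\leq\phi_n\leq 1$ and $\phi_n\uparrow 1$; then $\phi_n\nu$ is a regular measure and, within the compact support of $\phi_n$, the local solution is a bona fide solution of the obstacle problem to which Theorem 5 in \cite{DMZ12} applies, giving an It\^o identity for $\int_\cO \phi_n(x)\,\Phi_\epsilon(u_t(x))\,dx$.

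The second step is to send $\epsilon\to 0$ for fixed $n$. All integrands are controlled by $\epsilon$-uniform bounds: $|\Phi_\epsilon'(x)|\leq C|x|$ (because $\Phi_\epsilon'(0)=0$ and $\Phi_\epsilon''$ is bounded) and $|\Phi_\epsilon''(x)|\leq C$, which combined with the Lipschitz hypothesis \textbf{(H)} and the $\HH_{loc}$-regularity of $u$ give $L^1$-domination on $[0,t]\times\cO$ (against the $\phi_n$-window) for each Lebesgue integral, and $L^2$-domination for the integrand of the stochastic integral, to which It\^o's isometry (or Burkholder--Davis--Gundy) is applied. The resulting identity is \eqref{It\^o'spositivepart} with an additional factor $\phi_n$ in every integrand, and with $u,\nabla u$ effectively replaced by $u^+,\nabla u^+$ since $\varphi'(0)=0$ confines the integrands to $\{u>0\}=\{u^+>0\}$, on which $u=u^+$ and $\nabla u=\nabla u^+$ a.e.

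The final step is to let $\phi_n\uparrow 1$. Here the global assumption $u^+\in\cH_T$ together with the integrability of $\xi^+,\,f^{u,0},\,g^{u,0},\,h^{u,0}$ permit monotone or dominated convergence in every Lebesgue-type integral, and a standard isometry argument handles the martingale term. I expect the main obstacle to be the convergence of the measure term $\int_0^t\!\int_\cO\phi_n\,\varphi'(u_s^+)\,\nu(dx\,ds)\to\int_0^t\!\int_\cO\varphi'(u_s^+)\,\nu(dx\,ds)$, which simultaneously requires giving meaning to the right-hand side for a merely local regular $\nu$ and justifying the passage to the limit. For the first point I would exploit the quasi-continuous version $\tilde u$ and the Skohorod condition $\int (\tilde u-S)\,d\nu=0$, which places the support of $\nu$ inside $\{\tilde u=S\}$; the integrand $\varphi'(\tilde u^+)$ is then quasi-continuous and dominated by the constant $C\tilde u^+$, so Lemma \ref{convergemeas} applied to the parabolic potentials attached to $\phi_n\nu$ yields the desired limit. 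Performing the limits in the order $\epsilon\to 0$ then $n\to\infty$, combined with a diagonal extraction if needed to exchange the two parameters, produces exactly \eqref{It\^o'spositivepart}.
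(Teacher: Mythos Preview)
Your overall architecture (regularise $\varphi(\cdot^+)$, localise with a cut-off, pass to the limits) matches the paper's, and your regularisation $\Phi_\epsilon$ is essentially the paper's $\psi_n(y)=\varphi(y)\zeta(ny)$. However, the localisation step contains a genuine gap. Theorem~5 in \cite{DMZ12} is an It\^o formula for a \emph{global} solution $u\in\cH_T$ and produces an \emph{unweighted} identity for $\int_\cO\Phi(u_t)\,dx$; it does not yield a formula for the weighted quantity $\int_\cO\phi_n(x)\,\Phi_\epsilon(u_t(x))\,dx$, and the phrase ``within the compact support of $\phi_n$ the local solution is a bona fide solution'' does not bridge this: $u$ restricted to $\mathrm{supp}\,\phi_n$ is not an element of $\cH_T$ with null Dirichlet data, so Theorem~5 simply does not apply to it.

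The paper fixes this by setting $w_t=\phi\,u_t$ and computing, via the product rule, the OSPDE satisfied by $w$: one obtains a genuine global solution $(w,\phi\nu)$ with null boundary data and \emph{modified} coefficients
\[
\bar f_t=\phi f_t-\sum a_{i,j}(\partial_i\phi)(\partial_j u_t)-\sum(\partial_i\phi)g_{i,t},\qquad
\widetilde{g}_{i,t}=\phi g_{i,t}-u_t\sum a_{i,j}\partial_j\phi,\qquad
\widetilde{h}_{j,t}=\phi h_{j,t}.
\]
Only then can Theorem~5 be invoked, and it gives an identity for $\int_\cO\psi_n(\phi u_t)\,dx$, not for $\int_\cO\phi\,\psi_n(u_t)\,dx$. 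The price is the commutator-type extra terms involving $\partial_i\phi$; showing that these vanish as $\phi=\phi_n\uparrow 1$ is precisely where the Lipschitz boundary hypothesis enters, via a sequence $(\phi_n)$ satisfying $\sup_n\|\phi_n y\|_{H_0^1}\le C\|y\|_{H_0^1}$ and $y\,\partial_i\phi_n\to 0$ in $L^2$ for every $y\in H_0^1(\cO)$ (Lemma~19 in \cite{DM11}). Your proposal never produces these commutator terms and therefore never uses the Lipschitz assumption, which is a sign that the localisation has been short-circuited. For the measure term, the paper does not invoke Lemma~\ref{convergemeas} (whose hypotheses---boundedness in $\mathcal K$ and weak $L^2(H_0^1)$ convergence of the associated potentials---you have not checked); instead it reads off from the already-established identity a uniform bound $\sup_n\int\phi_n\varphi'(w_{n}^+)\,d\nu\le C$, applies Fatou to conclude $\int\varphi'(u^+)\,d\nu<\infty$, and then passes to the limit by dominated convergence.
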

\begin{proof}We consider  $\phi\in\mathcal{C}_c^\infty(\cO),\ 0\leq\phi\leq1$, and put $$\forall t\in[0,T],\quad w_t=\phi u_t.$$


A direct calculation yields the following relation:
$$dw_t=Lw_tdt+\bar{f}_tdt+\sum_{i=1}^d\partial_i\widetilde{g_{i,t}}dt+\sum_{j=1}^\infty\widetilde{h_{j,t}}dB_t^j+\phi\nu(x,dt)$$
where $$\bar{f}_t=\phi f_t-\sum a_{i,j}(\partial_i\phi)(\partial_j
u_t)-\sum(\partial_i\phi)g_{i,t}\, ,$$
$$\widetilde{g_{i,t}}=\phi g_{i,t}-u_t\sum a_{i,j}\partial_j\phi\, ,\quad\widetilde{h_{j,t}}=\phi h_{j,t}\, .$$

Now we prove that $\phi\nu$ is a regular measure:\\ We know that:
\begin{equation}\label{regularmeas}\forall\varphi\in\mathcal{W}_T^+,\quad\int(-\frac{\partial\varphi_s}{\partial s},v_s)ds
+\int\mathcal{E}(\varphi_s,v_s)ds=\int\int\varphi(s,x)d\nu.\end{equation}
We replace $\varphi$ by $\phi\varphi$ in (\ref{regularmeas}), where
$\phi$ is the same as before, and we obtain the following relation:
$$\int(-\frac{\partial\phi\varphi_s}{\partial s},v_s)ds+\int\mathcal{E}(\phi\varphi_s,v_s)ds=\int\int\phi\varphi(s,x)d\nu$$
note that $\phi$ does not depend on $t$ and by a similar calculation
as before,  we get
$$\int(-\frac{\partial\varphi_s}{\partial s},\phi v_s)ds+\int\mathcal{E}(\varphi_s, \phi v_s)ds+\int(K_s,\varphi_s)ds-\int(k_s,\nabla\varphi_s)ds
=\int\int\varphi(s,x)d\phi\nu$$ where $$K_t=\sum
a_{i,j}(\partial_i\phi)(\partial_j v_t),\quad k_t=v_t\sum
a_{i,j}\partial_j\phi.$$ We denote by $\bar{z}$ the solution of the
following PDE with Dirichlet boundary condition and the initial
value $0$:
$$d\bar{z}_t+A\bar{z}_tdt=K_tdt+div k_tdt.$$
If we set $\bar{v}=\phi v+\bar{z}$, then $\bar{v}$ satisfies the
following relation:
$$\int_0^t(-\frac{\partial\varphi_s}{\partial s},\bar{v}_s)ds+\int_0^t\mathcal{E}(\varphi_s,\bar{v}_s)ds=\int_0^t\int_\cO\varphi(x,s)d\phi\nu.$$
It is easy to verify that $\bar{v}\in\mathcal{P}$. Thus $\phi\nu$ is
a regular measure associated to $\bar{v}$.
\\Hence, we deduce that $(\phi u,\phi\nu)$ satisfies an OSPDE with $\phi\xi$ as initial data and zero Dirichlet boundary conditions.
\\Now, we approximate the function $\psi:\ y\in\mathbb{R}\rightarrow\varphi(y^+)$ by a sequence $(\psi_n)$ of regular functions. Let $\zeta$ be a $\mathcal{C}^\infty$ increasing function such that
$$\forall y\in]-\infty,1],\ \zeta(y)=0\ and\ \forall y\in[2,+\infty[,\ \zeta(y)=1.$$
We set for all $n$:$$\forall y\in\mathbb{R},\ \
\psi_n(y)=\varphi(y)\zeta(ny).$$ It is easy to verify that
$(\psi_n)$ converges uniformly to the function $\psi$, $(\psi'_n)$
converges everywhere to the function $(y\rightarrow\varphi'(y^+))$
and $(\psi_n'')$ converges everywhere to the function $(y\rightarrow
\1_{\{y>0\}}\varphi''(y^+))$. Moreover we have the estimates:
\begin{equation}\label{controlofpsi}
\forall y\in\mathbb{R}^+,\ n\in\mathbb{N}^*,\ \
0\leq\psi_n(y)\leq\psi(y),\ \ 0\leq\psi'_n(y)\leq Cy,\ \
\left|\psi_n''(y)\right|\leq C,
\end{equation}
where $C$ is a constant. Thanks to It\^o's formula for the solution
of OSPDE (\ref{SPDEO}) (see Theorem 5 in \cite{DMZ12}), we have
almost surely, for $t\in[0,T]$,
\begin{eqnarray*}
&&\int_\cO\psi_n(w_t(x))dx+\int_0^t\mathcal{E}(\psi'_n(w_s),w_s)ds=\int_\cO\psi_n(\phi(x)\xi(x))dx+\int_0^t\int_\cO\psi'_n(w_s(x))\bar{f}_s(x)dxds\\&&-\sum\int_0^t\int_\cO\psi_n''(w_s(x))\partial_iw_s(x)\widetilde{g_{i,s}}(x)dxds+\sum\int_0^t\int_\cO\psi'_n(w_s(x))\widetilde{h_{j,s}}(x)dxdB_s^j\\&&+\frac{1}{2}\int_0^t\int_\cO\psi_n''(w_s(x))|\widetilde{h_{j,s}}(x)|^2dxds+\int_0^t\int_\cO\psi'_n(w_s(x))d\phi\nu(x,s).
\end{eqnarray*}
Making $n$ tends to $+\infty$ and using the fact that
$\1_{\{w_s>0\}}\partial_iw_s=\partial_iw_s^+$, we get by the
dominated convergence theorem:
 \begin{eqnarray*}
 &&\int_\cO\varphi(w_t^+(x))dx+\int_0^t\mathcal{E}(\varphi'(w^+_s),w^+_s)ds=\int_\cO\varphi(\phi(x)\xi^+(x))dx+\int_0^t\int_\cO\varphi'(w^+_s(x))\bar{f}_s(x)dxds\\&&-\sum\int_0^t\int_\cO\varphi''(w^+_s(x))\partial_iw^+_s(x)\widetilde{g_{i,s}}(x)dxds+\sum\int_0^t\int_\cO\varphi'(w^+_s(x))\widetilde{h_{j,s}}(x)dxdB_s^j\\&&+\frac{1}{2}\int_0^t\int_\cO\varphi''(w^+_s(x))\1_{\{w_s>0\}}|\widetilde{h_{j,s}}(x)|^2dxds+\int_0^t\int_\cO\phi\varphi'(w^+_s(x))d\nu(x,s), \quad a.s.\end{eqnarray*}
Then we consider a sequence $(\phi_n)$ in
$\mathcal{C}_c^\infty(\cO)$, $0\leq\phi_n\leq1$, converging to 1
everywhere on $\cO$ and such that for any $y\in H_0^1(\cO)$  the
sequence $(\phi_ny)$ tends to $y$ in $H_0^1(\cO)$ and
$$\sup_n\left\|\phi_ny\right\|_{H_0^1(\cO)}\leq C\left\|
y\right\|_{H_0^1(\cO)},$$ where $C$ is a constant which does not
depend on $y$. Such a sequence $(\phi_n )$ exists because $\partial
\cO$ is assumed to be Lipschitz  (see Lemma 19 in \cite{DM11}).
\\One has to remark that if $i\in\{1,...d\}$ and $y\in H_0^1(\cO)$,
then $(y\partial_i\phi_n)$ tends to 0 in $L^2(\cO)$.
\\Now, we set $w_n=\phi_n u$ and $$\widetilde{f^n_t}=\phi_n f_t-\sum a_{i,j}(\partial_i\phi_n)(\partial_j u_t)-\sum(\partial_i\phi_n)g_{i,t}$$
$$\widetilde{g^n_{i,t}}=\phi_n g_{i,t}-u_t\sum a_{i,j}\partial_j\phi_n,\quad\widetilde{h^n_{j,t}}=\phi_nh_{j,t}$$
Applying the above It\^o formula to $\varphi(w_n^+)$, we get
 \begin{equation}{\label{It\^o's5}}\begin{split}&\int_\cO\varphi(w_{n,t}^+(x))dx+\int_0^t\mathcal{E}(\varphi'(w^+_{n,s}),w^+_{n,s})ds=\int_\cO\varphi(\phi_n(x)\xi^+(x))dx+\int_0^t\int_\cO\varphi'(w^+_{n,s}(x))\bar{f}_s(x)dxds\\&-\sum\int_0^t\int_\cO\varphi''(w^+_{n,s}(x))\partial_iw^+_{n,s}(x)\widetilde{g_{i,s}}(x)dxds+\sum\int_0^t\int_\cO\varphi'(w^+_{n,s}(x))\widetilde{h_{j,s}}(x)dxdB_s^j\\&+\frac{1}{2}\int_0^t\int_\cO\varphi''(w^+_{n,s}(x))\1_{\{w_{n,s}>0\}}|\widetilde{h_{j,s}}(x)|^2dxds+\int_0^t\int_\cO\phi_n\varphi'(w_{n,s}^+(x))d\nu(x,s),\quad a.s.\end{split}\end{equation}
We have
\begin{eqnarray*}\varphi'(w^+_{n,s})\bar{f}^n_s&-&\sum\varphi''(w^+_{s,n})\partial_iw^+_{n,s}\widetilde{g^n_{i,s}}=\varphi'(w^+_{n,s})\phi_nf_s-\sum
a_{i,j}\varphi'(w^+_{n,s})\partial_j\phi_n\partial_iu_s^+\\&+&\sum
a_{i,j}\varphi''(w^+_{n,s})u^+_s\partial_iw^+_{n,s}\partial_j\phi_n-\sum(\varphi'(w^+_{n,s}))g_{i,s}\partial_i\phi_n+\varphi''(w^+_{n,s})\phi_ng_{i,s}\partial_iw^+_{n,s}
.\end{eqnarray*} Remarking that for all $s\in(0,T]$,
$(\phi_n\varphi'(w^+_{n,s}))$ (resp.
$(\partial_i\phi_n\varphi'(w^+_{n,s}))$) tends to $\varphi'(u^+_s)$
(resp. 0) in $H_0^1(\cO)$ (resp. $L^2(\cO)$) we get by the dominated
convergence theorem the convergence of all the terms  in equality
\eqref{It\^o's5} excepted the one involving the measure $\nu$. For
this last term, we know that $w_n$ is quasi-continuous and from
\eqref{controlofpsi} and \eqref{It\^o's5} it is easy to verify
$$\sup_{n}\int_0^t\int_\cO\phi_n\varphi'(w_{n,s}^+(x))d\nu(x,s)\leq C.$$
Then, by Fatou's lemma, we have
$$\int_0^t\int_\cO\varphi'(u^+_s)\nu(dxds)=\liminf_{n\rightarrow\infty}\int_0^t\int_\cO\phi_n\varphi'(w_{n,s}^+(x))d\nu(x,s)<+\infty,\ \ a.s.$$
Hence, the convergence of the last term comes from the dominated
convergence theorem.
\end{proof}

\subsection{The comparison theorem for local solutions}
Firstly, we prove an It\^o formula for the difference of local
solutions of two OSPDE, $(u^1,\nu^1)\in\cR_{loc}(\xi^1,f^1,g,h,S^1)$
and $(u^2,\nu^2)\in\cR_{loc}(\xi^2,f^2,g,h,S^2)$, where
$(\xi^i,f^i,g,h,S^i)$ satisfy assumptions {\bf(H)}, {\bf(HIL)},
{\bf(OL)} and {\bf(HOL)}. We denote by $\hat{u}=u^1-u^2$,
$\hat{\nu}=\nu^1-\nu^2$, $\hat{\xi }=\xi^1-\xi ^2,$ and
$$ \hat{f}\left( t,\omega ,x,y,z\right) =f^1\left( t,\omega
,x,y+u_t^2\left( x\right) ,z+\nabla u_t^2\left( x\right) \right)
-f^2\left( t,\omega ,x,u_t^2\left( x\right) ,\nabla u_t^2\left(
x\right) \right) , $$ $$ \hat{g}\left( t,\omega ,x,y,z\right)
=g\left( t,\omega ,x,y+u_t^2\left( x\right) ,z+\nabla u_t^2\left(
x\right) \right) -g\left( t,\omega ,x,u_t^2\left( x\right) ,\nabla
u_t^2\left( x\right) \right) , $$ $$ \hat{h}\left( t,\omega
,x,y,z\right) =h\left( t,\omega ,x,y+u_t^2\left( x\right) ,z+\nabla
u_t^2\left( x\right) \right) -h\left( t,\omega ,x,u_t^2\left(
x\right) ,\nabla u_t^2\left( x\right) \right) . $$
\begin{proposition}
Assume that $\partial\cO$ is Lipschitz and that $\hat{u}^+$ belongs
to $\cH_T$.
Let $\varphi :{\mathbb{R}}\rightarrow {\mathbb{R}}$ be a function of
class ${\cal C}^2,$ which admits a bounded second order derivative
and such that $\varphi ^{\prime }\left( 0\right) =0.$ Then the
following relation holds for each $t\in [0,T],$
\begin{eqnarray}\label{It\^o'spositivepartdiff}&&\int_\mathcal{O}\varphi(\hat{u}_t^+(x))dx+\int_0^t\mathcal{E}(\varphi'(\hat{u}_s^+),\hat{u}_s^+)ds=\int_\mathcal{O}\varphi(\hat{\xi}^+(x))dx
+\int_0^t\int_\cO\varphi'(\hat{u}_s^+(x))\hat{f}_s(x)dxds\nonumber\\&&-\sum_{i=1}^d\int_0^t\int_\cO\varphi''(\hat{u}_s^+(x))\partial_i\hat{u}_s^+(x)\hat{g}_s^i(x)dxds+\frac{1}{2}\int_0^t\int_\cO\varphi''(\hat{u}_s^+(x))\1_{\{\hat{u}_s>0\}}|\hat{h}_s(x)|^2dxds\nonumber\\&&+\sum_{i=1}^\infty\int_0^t\int_\cO\varphi'(\hat{u}_s^+(x))\hat{h}_s^j(x)dxdB_s^j+\int_0^t\int_\mathcal{O}\varphi'(\hat{u}_s^+(x))\hat{\nu}(dxds)\qquad
a.s.\end{eqnarray}
\end{proposition}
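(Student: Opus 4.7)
My plan is to follow the proof of Proposition \ref{It\^o'slocalposi} almost verbatim, now with $\hat{u}=u^1-u^2$ in place of $u$ and the signed measure $\hat\nu=\nu^1-\nu^2$ in place of $\nu$. Subtracting the defining relations for $u^1$ and $u^2$ shows that $\hat u$ satisfies, in the distributional sense, a stochastic equation of the same shape as a local OSPDE but with a signed finite-variation measure part. The key observation making the approach work is that, since each $\nu^i$ is a local regular measure, every integral against $\hat\nu$ can be split as $d\nu^1-d\nu^2$, and the machinery of (positive) regular measures from the previous proof applies to each piece separately.

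First I would localize by fixing $\phi\in\mathcal{C}_c^\infty(\cO)$ and setting $w=\phi\hat u$. The same computation as in the previous proof gives that $w$ satisfies a global SPDE with zero Dirichlet boundary whose measure part is $\phi\nu^1-\phi\nu^2$; both $\phi\nu^i$ are global regular measures by the local regularity hypothesis, with associated parabolic potentials constructed as in the preceding proof. Next I would approximate $y\mapsto\varphi(y^+)$ by the smooth sequence $\psi_n(y)=\varphi(y)\zeta(ny)$ and apply the It\^o formula to $\psi_n(w)$. The only point requiring care is extending Theorem 5 of \cite{DMZ12} from a positive regular measure to a signed one; this follows because the proof of that It\^o formula is linear in the finite-variation drift and proceeds through a spatial mollification that does not use the sign of the measure. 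Concretely, it suffices to define $\int \psi_n'(w)\,d(\phi\hat\nu):=\int \psi_n'(w)\,d(\phi\nu^1)-\int \psi_n'(w)\,d(\phi\nu^2)$, each being well defined since $\psi_n'$ is bounded and each $\phi\nu^i$ is finite.

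Passing to the limit $n\to\infty$ and then removing the cutoff via a sequence $\phi_n\uparrow 1$ with $\|\phi_n y\|_{H_0^1(\cO)}\le C\|y\|_{H_0^1(\cO)}$ (available thanks to the Lipschitz regularity of $\partial\cO$) proceeds as in the previous proof for every term not involving the measure. For the measure term, I would again split along $\nu^1$ and $\nu^2$. Since $\varphi'(\hat u^+)$ admits a quasi-continuous version (because $u^1, u^2$ do and $\varphi'\circ(\cdot)^+$ is Lipschitz) and is dominated by an element of $\mathcal P$, Lemma \ref{convergemeas} applied separately to the potentials associated with $\phi_n\nu^i$ delivers the convergence $\int \phi_n\varphi'(w_{n,s}^+)\,d\nu^i\to\int\varphi'(\hat u_s^+)\,d\nu^i$ for $i=1,2$; subtracting yields the desired identity.

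The main obstacle is handling the signed measure $\hat\nu$ in the final cutoff-removal step. In the single-solution case the corresponding Fatou argument worked because both the integrand $\varphi'(u^+)$ and the measure $\nu$ are non-negative, so finiteness of the limit came from a one-sided $\liminf$ bound; here the $-\nu^2$ contribution has the opposite sign, and Fatou no longer applies directly. Replacing that Fatou step by an appeal to Lemma \ref{convergemeas} on each positive piece $\phi_n\nu^i$ circumvents this and produces genuine convergence rather than a one-sided inequality; verifying the hypotheses of Lemma \ref{convergemeas} (quasi-continuity of $\varphi'(\hat u^+)$ and its domination by a parabolic potential) is where the bulk of the technical work lies.
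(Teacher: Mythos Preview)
Your proposal is correct and follows essentially the same route as the paper. The paper's proof is terser: after localizing to $\hat w=\phi\hat u$, it observes that each $(\phi u^i,\phi\nu^i)$ is a genuine (global) solution of an OSPDE with zero Dirichlet condition, and then invokes Theorem~6 of \cite{DMZ12} (the It\^o formula for the \emph{difference} of two solutions) directly, rather than extending Theorem~5 by linearity as you do; the two are equivalent. From there the paper simply says ``do the same approximations as in the proof of Proposition~\ref{It\^o'slocalposi}'', without further comment.

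Your discussion of the cutoff-removal step is more careful than the paper's. You are right that the Fatou argument used for a single positive measure $\nu$ does not transfer verbatim to the signed $\hat\nu=\nu^1-\nu^2$, and your proposed remedy via Lemma~\ref{convergemeas} applied separately to the potentials associated with $\phi_n\nu^1$ and $\phi_n\nu^2$ is a clean way to close the gap. An alternative, slightly more economical, argument is to note that once all the other terms in the approximate identity converge (which you already establish by dominated convergence), convergence of the measure term is forced by the equality itself; one then only needs to identify the limit, for which Fatou on each positive piece suffices to show $\int\varphi'(\hat u^+)\,d\nu^i<\infty$ and hence that $\int\varphi'(\hat u^+)\,d\hat\nu$ is well defined.
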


\begin{proof}We consider  $\phi\in\mathcal{C}_c^\infty(\cO),\ 0\leq\phi\leq1$, and put $$\forall t\in[0,T],\quad \hat{w}_t=\phi \hat{u}_t.$$
From the proof of Proposition \ref{It\^o'slocalposi}, we know that
$(\phi u^1,\phi\nu^1)$ and $(\phi u^2,\phi\nu^2)$ are the solutions
of problem (\ref{SPDEO}) with null Dirichlet boundary conditions. We
have the It\^o formula for $\hat{w}$, see Theorem 6 in \cite{DMZ12}.
Then we do the same approximations as in the proof of Proposition
\ref{It\^o'slocalposi}, we can get the desired formula.
\end{proof}

We have the following comparison theorem:
\begin{theorem}{\label{comparison}}
Assume that $(\xi^i,f^i,g,h,S^i)$, $i=1,2$, satisfy assumptions
{\bf(H)}, {\bf(HIL)}, {\bf(OL)} and {\bf(HOL)}.
 Let $(u^i,\nu^i)\in {\cal R}_{loc}\left( \xi ^i,f^i,g,h,S^i\right)
,i=1,2$ and suppose that the process $\left( u^1-u^2\right) ^{+}$
belongs to $\HH_T
$ and that one has%
$$ E\left( \left\| f^1\left(.,., u^2,\nabla u^2\right) -f^2\left(.,.,
u^2,\nabla u^2\right) \right\| ^*_{\theta;t}\right) ^2<\infty ,\;\;
\mbox{ for all} \quad t\in[0,T].$$ If $\xi ^1\le \xi ^2$ a.s.,
$f^1\left(t,\omega, u^2,\nabla u^2\right) \le f^2\left(t,\omega,
u^2,\nabla u^2\right) $,  $dt\otimes dx \otimes dP$-a.e. and
$S^1\leq S^2$, $dt\otimes dx\otimes dP$-a.s., then one has $u^1
(t,x)\le u^2 (t,x)$, $dt \otimes dx\otimes dP$-a.e.
\end{theorem}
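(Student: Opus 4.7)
The plan is to apply the It\^o formula \eqref{It\^o'spositivepartdiff} from the previous proposition to $\hat u = u^1-u^2$ with the test function $\varphi(y)=y^2$, which is of class $\mathcal{C}^2$, has bounded second derivative and satisfies $\varphi'(0)=0$. The hypothesis $\hat u^+ \in \mathcal{H}_T$ is exactly what the proposition requires, and the integrability conditions on $\hat f,\hat g,\hat h$ follow from {\bf(H)}, {\bf(HIL)} and the extra assumption on $\left\| f^1(\cdot,u^2,\nabla u^2)-f^2(\cdot,u^2,\nabla u^2)\right\|^*_{\theta;t}$. Then I would take expectation, use uniform ellipticity and Lipschitz/contraction bounds on $\hat g,\hat h$ to absorb the gradient terms, and conclude via Gronwall that $E\|\hat u_t^+\|_2^2=0$ for every $t\in[0,T]$.

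After applying It\^o, the initial term $\int_{\mathcal O}(\hat\xi^+)^2\,dx$ vanishes since $\xi^1\le\xi^2$. The drift term $2\int_0^t\int_{\mathcal O}\hat u_s^+\hat f_s\,dxds$ is split along the decomposition
\[
\hat f_s=\bigl[f^1(s,u^1,\nabla u^1)-f^1(s,u^2,\nabla u^2)\bigr]+\bigl[f^1(s,u^2,\nabla u^2)-f^2(s,u^2,\nabla u^2)\bigr];
\]
the second bracket is $\le 0$ and is multiplied by $\hat u_s^+\ge 0$, so it gives a non-positive contribution, while the first bracket is controlled by $C(|\hat u|+|\nabla \hat u|)$ and handled by Cauchy--Schwarz and Young's inequality \eqref{young}. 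The $\hat g$ and $\hat h$ terms are treated analogously: the quadratic variation of the martingale part contributes $\int_0^t\int_{\mathcal O}\mathbf{1}_{\{\hat u_s>0\}}|\hat h_s|^2 dxds$, bounded using the Lipschitz constant $\beta$ on $\hat h$; combined with the energy term, the contraction condition $2\alpha+\beta^2<2\lambda$ leaves a positive coefficient in front of $\int_0^t\mathcal E(\hat u_s^+)ds$, which we drop, and the leftover terms are linear in $\|\hat u_s^+\|_2^2$. The stochastic integral has zero expectation after a routine localization using $\hat u^+\in \mathcal{H}_T$.

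The delicate point — and the main obstacle — is to show that the measure contribution
\[
\int_0^t\int_{\mathcal O}\hat u_s^+\,\hat\nu(dx\,ds)=\int_0^t\int_{\mathcal O}\hat u_s^+\,d\nu^1-\int_0^t\int_{\mathcal O}\hat u_s^+\,d\nu^2
\]
is non-positive. For the $\nu^2$ piece this is immediate since $\hat u^+\ge 0$ and $\nu^2\ge 0$. For the $\nu^1$ piece, the Skohorod condition in the definition of a local solution, together with Proposition on local regular measures (which do not charge polar sets), implies that $\tilde u^1=S^1$ $\nu^1$-a.e., where $\tilde u^1$ is the quasi-continuous version. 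Since $\tilde u^2\ge S^2\ge S^1$ quasi-everywhere (by $S^1\le S^2$ and the fact that $u^2\ge S^2$ holds q.e. for the quasi-continuous version), one gets $\hat u=\tilde u^1-\tilde u^2\le S^1-S^2\le 0$ on the support of $\nu^1$, hence $\hat u^+=0$ $\nu^1$-a.e. To make this integration against $\nu^1$ rigorous when $u^1,u^2$ are only local solutions, I would multiply everything by a cutoff $\phi\in \mathcal{C}_c^\infty(\mathcal O)$, use that $\phi\nu^i$ are genuine regular measures, and invoke Lemma \ref{convergemeas} along a sequence $\phi_n\nearrow 1$ to pass to the limit.

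Combining these three ingredients gives the estimate
\[
E\int_{\mathcal O}(\hat u_t^+)^2 dx\le C\int_0^t E\int_{\mathcal O}(\hat u_s^+)^2 dx\,ds,
\]
and Gronwall's lemma yields $E\|\hat u_t^+\|_2^2=0$ for every $t\in[0,T]$, i.e.\ $u^1\le u^2$ $dt\otimes dx\otimes dP$-a.e.
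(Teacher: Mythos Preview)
Your proof is correct and follows the same overall strategy as the paper: apply the It\^o formula \eqref{It\^o'spositivepartdiff} with $\varphi(y)=y^2$, show the measure term is non-positive via the Skorohod conditions and $S^1\le S^2$ (the paper writes this compactly as $\int\hat u^+\,d\hat\nu=\int(S^1-u^2)^+\,d\nu^1-\int(u^1-S^2)^+\,d\nu^2\le 0$), absorb gradient terms using the contraction assumption, kill the martingale by a stopping argument, and close with Gronwall.

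Where you differ is in the handling of $\hat f$. You immediately split off and drop the non-positive part $f^1(u^2,\nabla u^2)-f^2(u^2,\nabla u^2)$, leaving only a Lipschitz remainder controlled by $C(|\hat u^+|+|\nabla\hat u^+|)$; Gronwall then applies directly. The paper instead keeps $\hat f^{\hat u,0,+}$ on the right-hand side, pairs it with $\|\hat u^+\|_{\theta;t}$ via the duality \eqref{dual2}, and is then forced into a longer detour: a Burkholder--Davis--Gundy estimate to control $E\|\hat u^+\|^2_{2,\infty;t}$, use of \eqref{controltheta} to bound $E\|\hat u^+\|^2_{\theta;t}$, and a careful choice of the auxiliary parameter $\delta$ to absorb that term, before finally observing that $\hat f^{\hat u,0,+}=0$, $\hat g^{\hat u,0}=\hat h^{\hat u,0}=0$. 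Your direct route is cleaner for the comparison statement itself; the paper's longer computation has the side benefit of producing, along the way, a quantitative a~priori bound on $\hat u^+$ in terms of $\hat\xi^+$, $\hat f^{\hat u,0,+}$, $\hat g^{\hat u,0}$, $\hat h^{\hat u,0}$ that would remain useful when those quantities do not vanish.
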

\begin{proof}

Applying It\^o's formula (\ref{It\^o'spositivepartdiff}) to
$(\hat{u}^+)^2$, we have $\forall t\in[0,T]$,
\begin{eqnarray}\label{itoposipart}&&\int_\mathcal{O}(\hat{u}_t^+(x))^2dx+2\int_0^t\mathcal{E}((\hat{u}_s^+))ds=\int_\mathcal{O}(\hat{\xi}^+(x))^2dx
+2\int_0^t\int_\cO\hat{u}_s^+(x)\hat{f}_s(x,\hat{u}_s(x),\nabla\hat{u}_s(x))dxds\nonumber\\&&-2\sum_{i=1}^d\int_0^t\int_\cO\partial_i\hat{u}_s^+(x)\hat{g}_s^i(x,\hat{u}_s(x),\nabla\hat{u}_s(x))dxds+\int_0^t\int_\cO
\1_{\{\hat{u}_s>0\}}|\hat{h}_s(x,\hat{u}_s(x),\nabla\hat{u}_s(x))|^2dxds\nonumber\\&&+2\sum_{i=1}^\infty\int_0^t\int_\cO\hat{u}_s^+(x)\hat{h}_s^j(x,\hat{u}_s(x),\nabla\hat{u}_s(x))dxdB_s^j+2\int_0^t\int_\mathcal{O}\hat{u}_s^+(x)\hat{\nu}(dxds),\qquad
a.s.\end{eqnarray} Remarking the following relation
\begin{eqnarray*}\int_0^t\int_\mathcal{O}\hat{u}_s^+(x)\hat{\nu}(dxds)=\int_0^t\int_\mathcal{O}(S^1-u^2)^+\nu^1(dxds)-\int_0^t\int_\mathcal{O}(u^1-S^2)^+\nu^2(dxds)\leq0\end{eqnarray*}
The Lipschitz conditions in $\hat{g}$ and $\hat{h}$ and
Cauchy-Schwarz's inequality lead the following relations: for
$\delta$, $\epsilon>0$, we have
\begin{eqnarray*}
\int_0^t(\nabla
\hat{u}^+_s,\hat{g}_s(\hat{u}_s,\nabla\hat{u}_s)ds\leq(\alpha+\epsilon)\left\|\nabla
\hat{u}^+\right\|^2_{2,2;t}+c_\epsilon\left\|\hat{u}^+\right\|^2_{2,2;t}+c_\epsilon\left\|\hat{g}^{\hat{u},0}\right\|
_{2,2;t}^2,\end{eqnarray*} and
\begin{eqnarray*}
\int_0^t\left\|\1_{\{\hat{u}_s>0\}}\hat{h}_s(\hat{u}_s,\nabla\hat{u}_s)\right\|^2ds\leq(\beta^2+\epsilon)\left\|\nabla\hat{u}^+\right\|^2_{2,2;t}+c_\epsilon\left\|\hat{u}^+\right\|^2_{2,2;t}+c_\epsilon\left\|\hat{h}^{\hat{u},0}\right\|^2_{2,2;t}.
\end{eqnarray*}
Moreover, the Lipschitz condition in $\hat{f}$, the duality relation between elements in  $L_{\theta;t}$ and $L_{\theta;t}^*$ \eqref{dual2} 
and Young's inequality \eqref{young} yield the following relation: 
\begin{eqnarray*}\int_0^t\left(\hat{u}_s^+,\hat{f}_s\left( \hat{u}_s,\hat{u}_s\right) \right)
ds\leq\epsilon \left\| \nabla \hat{u}^+\right\|
_{2,2;t}^2+c_\epsilon \left\|\hat{u}^+\right\| _{2,2;t}^2+\delta
\left\|\hat{u}^+\right\| _{\theta;t}^2+c_\delta \left( \left\|
\hat{f}^{\hat{u},0+}\right\| ^*_{\theta;t}\right) ^2.
\end{eqnarray*}
Since $\cE(\hat{u}^+)\geq\lambda\left\|\nabla\hat{u}^+\right\|^2_2$,
we deduce from \eqref{itoposipart} that for all $t\in[0,T]$, almost
surely,
\begin{eqnarray}\label{8}
\left\|\hat{u}^+_t\right\|_2^2&+&2\left( \lambda -\alpha-\frac{\beta
^2}2-\frac 52\epsilon \right) \left\| \nabla\hat{u}^+\right\|
_{2,2;t}^2\le \left\|\hat{\xi}^+\right\|_2^2+\delta
\left\|\hat{u}^+\right\| _{\theta;t}^2+2c_\delta \left(
\left\|\hat{f}^{\hat{u},0+}\right\| ^*_{\theta;t}\right)
^2\nonumber\\&+&2c_\epsilon \left\|\hat{g}^{\hat{u},0}\right\|
_{2,2;t}^2+c_\epsilon \left\|\hat{h}^{\hat{u},0}\right\|
_{2,2;t}^2+5c_\epsilon \left\|\hat{u}^+\right\| _{2,2;t}^2+2M_t,
\end{eqnarray}
where
$M_t:=\sum_{j=1}^{\infty}\int_0^t\left(\hat{u}^+_s,\hat{h}_s^j\left(
\hat{u}_s,\nabla\hat{u}_s\right) \right) dB_s^j$ represents the
martingale part. Further, using a stopping procedure while taking
the expectation, the martingale part vanishes, so that
\begin{eqnarray*}
&&E\left\|\hat{u}^+_t\right\|_2^2+2\left( \lambda -\alpha
-\frac{\beta ^2}2-\frac 52\epsilon \right) E\left\|
\nabla\hat{u}^+\right\| _{2,2;t}^2\le
E\left\|\hat{\xi}^+\right\|_2^2+\delta E\left\|\hat{u}^+\right\|
_{\theta;t}^2\\&&+2c_\delta E\left(
\left\|\hat{f}^{\hat{u},0+}\right\| ^*_{\theta;t}\right)
^2+2c_\epsilon E\left\|\hat{g}^{\hat{u},0}\right\|
_{2,2;t}^2+c_\epsilon E\left\|\hat{h}^{\hat{u},0}\right\|
_{2,2;t}^2+5c_\epsilon \int_0^tE\left\|\hat{u}^+_s\right\|^2_2ds .
\end{eqnarray*}
Then we choose
$\epsilon=\frac{1}{5}\left(\lambda-\alpha-\frac{\beta^2}{2}\right)$,
set $\gamma=\lambda-\alpha-\frac{\beta^2}{2}$ and apply Gronwall's
lemma obtaining
\begin{equation}\label{1star}
E\left\|\hat{u}^+_t\right\|^2_2+\gamma
E\left\|\nabla\hat{u}^+\right\|^2_{2,2;t}\leq\left(\delta
E\left\|\hat{u}^+\right\|^2_{\theta;t}+ E\left[F\left(\delta,
\hat{\xi}^+,\hat{f}^{\hat{u},0+},\hat{g}^{\hat{u},0},\hat{h}^{\hat{u},0},t\right)\right]\right)e^{5c_\epsilon t},
\end{equation}
with $F(\delta,
\hat{\xi}^+,\hat{f}^{\hat{u},0+},\hat{g}^{\hat{u},0},\hat{h}^{\hat{u},0},t)=\big(\|\hat{\xi}^+\|^2+2c_\delta\big(
\|\hat{f}^{\hat{u},0+}\|
^*_{\theta;t}\big)^2+2c_\epsilon\|\hat{g}^{\hat{u},0}\|_{2,2;t}^2+c_\epsilon\|\hat{h}^{\hat{u},0}\|_{2,2;t}^2\big)$.
As a consequence one gets
\begin{equation}\label{2star}
E\left\|\hat{u}^+\right\|^2_{2,2;t}\leq\frac{1}{5c_\epsilon}\left(\delta
E\left\|\hat{u}^+\right\|^2_{\theta;t}+E\left[F\left(\delta,
\hat{\xi}^+,\hat{f}^{\hat{u},0+},\hat{g}^{\hat{u},0},\hat{h}^{\hat{u},0},t\right)\right]\right)\left(e^{5c_\epsilon
t}-1\right) .
\end{equation}
Now we return to the inequality (\ref{8}) and take the supremum in time,
getting
\begin{equation}\label{supremum}
\left\|\hat{u}^+\right\|^2_{2,\infty;t}\leq\delta\left\|\hat{u}^+\right\|^2_{\theta;t}+F\left(\delta,
\hat{\xi}^+,\hat{f}^{\hat{u},0+},\hat{g}^{\hat{u},0},\hat{h}^{\hat{u},0},t\right)+5c_\epsilon\left\|\hat{u}^+\right\|^2_{2,2;t}+2\sup_{s\leq
t}M_s
\end{equation}
We would like to take the expectation in this relation and for that
reason we need to estimate the bracket of the martingale part,
\begin{eqnarray*} \left\langle M\right\rangle _t^{\frac 12}\le \left\|\hat{u}^+\right\|
_{2,\infty ;t}\left\|\hat{h}(\hat{u},\nabla\hat{u})\right\|
_{2,2;t}\le \eta \left\|\hat{u}^+\right\| _{2,\infty ;t}^2+c_\eta
\left( \left\| \hat{u}^+\right\| _{2,2;t}^2+\left\|
\nabla\hat{u}^+\right\|
_{2,2;t}^2+\left\|\hat{h}^{\hat{u},0}\right\|^2_{2,2;t}\right)
\end{eqnarray*} with $\eta $ another small parameter to be properly
chosen. Using this estimate and the inequality of
Burkholder-Davis-Gundy we deduce from the inequality
(\ref{supremum}):
$$ \left( 1-2C_{BDG}\eta \right) E\left\|\hat{u}^+\right\| _{2,\infty
;t}^2\le \delta E\left\|\hat{u}^+\right\|
_{\theta;t}^2+E\left[F\left(\delta,
\hat{\xi}^+,\hat{f}^{\hat{u},0+},\hat{g}^{\hat{u},0},\hat{h}^{\hat{u},0},t\right)\right]$$
$$ +\left( 5c_\varepsilon +2C_{BDG}c_\eta
\right) E\left\|\hat{u}^+\right\| _{2,2;t}^2+2C_{BDG}c_\eta E\left\|
\nabla \hat{u}^+\right\| _{2,2;t}^2+2C_{BDG}c_\eta
E\left\|\hat{h}^{\hat{u},0}\right\|^2_{2,2;t}$$ where $C_{BDG}$ is
the constant corresponding to the Burkholder-Davis-Gundy inequality.
Further we choose the parameter $\eta =\frac 1{4C_{BDG}}$
and combine this estimate with \eqref{1star} and \eqref{2star} to deduce an estimate of the form:
\begin{eqnarray*}
E\left( \left\|\hat{u}^+\right\| _{2,\infty ;t}^2+\left\| \nabla
\hat{u}^+\right\| _{2,2;t}^2\right) \le \delta c_2\left( t\right)
E\left\| \hat{u}^+\right\|
_{\theta;t}^2+c_3(\delta,t)E\left[R\left(\delta,
\hat{\xi}^+,\hat{f}^{\hat{u},0+},\hat{g}^{\hat{u},0},\hat{h}^{\hat{u},0},t\right)\right]
\end{eqnarray*}
where $R\left(\delta,
\hat{\xi}^+,\hat{f}^{\hat{u},0+},\hat{g}^{\hat{u},0},\hat{h}^{\hat{u},0},t\right)=\left(\left\|\hat{\xi}^+\right\|^2+\left(
\left\|\hat{f}^{\hat{u},0+}\right\|
^*_{\theta;t}\right)^2+\left\|\hat{g}^{\hat{u},0}\right\|_{2,2;t}^2+\left\|\hat{h}^{\hat{u},0}\right\|_{2,2;t}^2\right)$
and $c_3(\delta,t)$ is a constant that depends on $\delta $ and $t,$
while $c_2\left( t\right) $ is independent of $ \delta .$ Dominating
the term $E\left\|\hat{u}^+\right\| _{\theta;t}^2$ by using the
estimate (\ref{controltheta}) and then choosing $\delta =\frac
1{2c_1^2c_2\left( t\right) }$, we get the following estimate:
\begin{eqnarray*}
E\left(\left\|\hat{u}^+\right\|^2_{2,\infty;t}+\left\|\nabla\hat{u}^+\right\|^2_{2,2;t}\right)\leq
k(t)E\left(\left\|\hat{\xi}^+\right\|^2_2+\left(\left\|\hat{f}^{\hat{u},0+}\right\|^*_{\theta;t}\right)^2+\left\|\hat{g}^{\hat{u},0}\right\|^2_{2,2;t}+\left\|\hat{h}^{\hat{u},0}\right\|^2_{2,2;t}\right).
\end{eqnarray*}


This implies the desired result since $\hat{\xi}\leq0$, $\hat{f}^0\leq0$
and $\hat{g}^0=\hat{h}^0=0$.
\end{proof}
\subsection{Maximum principle}\label{maxprinciple}
We first consider the case of a solution $u$ such that $u\leq0$ on
$\partial\cO$.
\begin{theorem}
\label{maxprinc} Suppose that Assumptions {\bf (H)}, {\bf (OL)},
{\bf (HIL)}, {\bf (HOL)},{\bf (HI$\mathbf{2 p}$)}, {\bf
(HO$\mathbf{\infty p}$)} and {\bf (HD$\mathbf{\theta p}$)} hold  for
some $\theta \in [0,1[$, $p\geq2$ and that the constants of the
Lipschitz conditions satisfy
$$\alpha +\frac{\beta ^2}2+72\beta ^2<\lambda. $$ Let $(u,\nu)\in
{\cal R}_{loc}\left( \xi ,f,g,h,S\right) $ be such that
$u^{+}\in \HH.$  Then one has%
\begin{eqnarray*} E\left\| u^{+}\right\| _{\infty ,\infty ;t}^p&\le &k(t)c(p)E\big(\left\|\xi^+-S'_0\right\|^p_\infty+(\left\| \bar{f}^{0,+}\right\|^*_{\theta;t})^p+
(\left\||\bar{g}^0|^2\right\|^*_{\theta;t})^{\frac{p}{2}}+(\left\||\bar{h}^0|^2\right\|^*_{\theta;t})^{\frac{p}{2}}\\&+&\left\|(S'_0)^+\right\|^p_\infty+(\left\|f^{',+}\right\|^*_{\theta;t})^p+
(\left\||g'|^2\right\|^*_{\theta;t})^{\frac{p}{2}}+(\left\||h'|^2\right\|^*_{\theta;t})^{\frac{p}{2}}\big)\end{eqnarray*}
where $k\left( t\right) $ is constant that depends on the structure
constants and $t\in [0,T].$
\end{theorem}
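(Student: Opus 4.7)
The plan is to apply the Moser iteration of Theorem \ref{LPESTIM} to the positive part $\bar u^+ := (u - S')^+$ rather than to $|u - S'|$. First, I split $u^+ \leq (u - S')^+ + (S')^+$, which reduces the result to separate estimates for $\bar u^+$ and $(S')^+$. The bound on $(S')^+$ follows from the without--obstacle $L^p$--maximum principle (Theorem 11 of \cite{DMS05}) applied to the linear SPDE \eqref{obstacle} with null Dirichlet boundary conditions, yielding the contributions involving $(S'_0)^+$, $f^{',+}$, $g'$, $h'$ on the right--hand side.

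For $\bar u^+$, note that $\bar u = u - S'$ solves the SPDE \eqref{uminusS'} with coefficients $\bar f, \bar g, \bar h$, initial datum $\xi - S'_0$, and obstacle $S - S' \leq 0$. Since $u^+ \in \mathcal{H}_T$ and $S'$ vanishes on $\partial \mathcal{O}$, one has $\bar u^+ \in \mathcal{H}_T$, so Proposition \ref{It\^o'slocalposi} applies to $\bar u$. Taking $\varphi(y) = |y|^l$ (approximated by the bounded--second--derivative functions $\varphi_n$ of Lemma \ref{estimateul}, which satisfy $\varphi_n'(0) = 0$), I obtain an It\^o formula for $(\bar u^+)^l$. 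The crucial observation is that the measure term $\int_0^t \int_{\mathcal{O}} \varphi'(\bar u_s^+) \, \nu(dx\,ds)$ vanishes: since $\nu$ is supported on $\{u = S\}$ and on this set $\bar u = S - S' \leq 0$, we have $\bar u^+ = 0$ on $\mathrm{supp}(\nu)$, so $\varphi'(\bar u^+) = \varphi'(0) = 0$ there.

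With the obstacle term gone, the It\^o formula for $(\bar u^+)^l$ has the same analytic structure as \eqref{LpIt\^o's}, with $|u - S'|$ replaced by $\bar u^+$ and every integrand implicitly restricted to $\{\bar u > 0\}$. I would then replay the Moser iteration of Subsection \ref{casborn} mutatis mutandis: redefine the processes $v_t$ and $v_t'$ with $|u - S'|$ replaced by $\bar u^+$; reproduce the bracket bound (Lemma \ref{bracket}), the domination (Lemma \ref{tau}), the Sobolev lower bound (Lemma \ref{sob}) -- still valid because $\bar u^+ \in H^1_0(\mathcal{O})$ -- and the iteration Lemmas \ref{sup}--\ref{estimev}; then take $l = p\sigma^n$ and let $n \to \infty$ to produce an $L^p$--bound on $\|\bar u^+\|_{\infty,\infty;t}$ in terms of $\|(\xi - S'_0)^+\|_\infty$ and the $\|\bar f^{0,+}\|_{\theta;t}^*$, $\|\bar g^0\|_{\theta;t}^*$, $\|\bar h^0\|_{\theta;t}^*$ norms. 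Since $(\xi - S'_0)^+ \leq |\xi^+ - S'_0|$ pointwise, this matches the advertised contribution; combining with the $(S')^+$--estimate yields the stated inequality.

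The main technical obstacle lies in the approximation needed to pass from uniformly bounded data (as in Subsection \ref{casborn}) to the general local setting. As in Subsection \ref{casgen}, I would truncate the data via \eqref{approximation}, apply the iteration to the corresponding bounded solutions, and pass to the limit using Lemma \ref{Lemme63} for the $L^p$--convergence of $\bar u^n$ together with Lemma \ref{convergemeas} for the regular--measure term. One also needs to justify the approximation $\varphi_n \to |\cdot|^l$ inside Proposition \ref{It\^o'slocalposi}, which rests on an $L^l$--estimate on $\bar u^+$ in the spirit of Lemma \ref{estimateul}; this is obtained from the positive--part It\^o formula by the same parabolic--energy computation, again exploiting $\varphi_n'(0) = 0$ to eliminate the $\nu$--contribution at each stage.
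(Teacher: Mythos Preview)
Your approach is genuinely different from the paper's and the core insight is correct, but there is a gap in the approximation step.

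The paper does \emph{not} rerun the Moser iteration on $(u-S')^{+}$. Instead it introduces the auxiliary \emph{global} solution $(y,\nu')=\mathcal{R}(\xi^{+},\check f,g,h,S)$ with null Dirichlet boundary, where $\check f=f+f^{0,-}$, applies Theorem~\ref{LPESTIM} verbatim to $y$ (so the whole machinery of Subsections~\ref{casborn}--\ref{casgen}, including the truncation argument, is used as a black box on a global object), and then invokes the comparison Theorem~\ref{comparison} to get $u\le y$, hence $(u-S')^{+}\le |y-S'|$. Your key observation that the $\nu$--term vanishes in the positive-part It\^o formula is exactly what makes Theorem~\ref{comparison} work, so the two routes share the same analytic heart; the paper simply packages the iteration inside Theorem~\ref{LPESTIM} applied to $y$.

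The difficulty with your direct route is the passage from bounded data to data satisfying only {\bf(HD$\boldsymbol{\theta p}$)}. The truncation scheme \eqref{approximation} of Subsection~\ref{casgen} produces \emph{global} solutions $(\bar u^{n},\nu^{n})=\mathcal{R}(\xi^{n}-S'_{0},\bar f^{n},\bar g^{n},\bar h^{n},S-S')$ with null Dirichlet condition, and Lemma~\ref{Lemme63} shows they converge to \emph{the} global solution of the limiting problem --- not to your local $u-S'$, which is only assumed to lie in $\mathcal{R}_{loc}$. Likewise, the $L^{l}$--estimate in the spirit of Lemma~\ref{estimateul} that you need to pass from $\varphi_{n}$ to $|\cdot|^{l}$ genuinely uses the uniform bound $K$ on the data; under {\bf(HD$\boldsymbol{\theta p}$)} alone the term $\int|\varphi_{n}'(\bar u^{+})|\,|\bar f^{0}|$ cannot be absorbed by Gronwall. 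So as written, neither (a) the global truncation nor (b) the $\varphi_{n}\to|\cdot|^{l}$ limit is justified for a merely local $u$. The paper's comparison step is precisely the device that bridges this gap: it transfers the estimate to the global $y$, for which the full approximation argument is available.
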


\begin{proof}Set $(y,\nu')=\mathcal{R}(\xi^+,\check{f},g,h,S)$ the solution with
zero Dirichlet boundary conditions, where the function $\check{f}$
is defined by $\check{f}=f+f^{0,-}$, with $f^{0,-}=0\vee(-f^0)$. The
assumption on the Lipschitz constants ensure the application of the
Section \ref{LPestimate}, which gives the following estimate :
\begin{eqnarray*}E\left\| y-S'\right\|^p_{\infty,\infty;t}\leq k(t)E(\left\|\xi^+-S'_0\right\|^p_\infty+(\left\| \bar{f}^{0,+}\right\|^*_{\theta;t})^p+
(\left\||\bar{g}^0|^2\right\|^*_{\theta;t})^{\frac{p}{2}}+(\left\||\bar{h}^0|^2\right\|^*_{\theta;t})^{\frac{p}{2}})\,,\end{eqnarray*}
where $\bar{f}^{0,+}=\check{f}^0-f'=f^{0,+}-f'$. On the boundary,
$y=0$ and $u\leq0$, hence, $u-y\leq0$ on the boundary, i.e.
$(u-y)^+\in\mathcal{H}$. Moreover, the other conditions of Theorem
\ref{comparison} are satisfied so that we can apply it and deduce
that $u-S'\leq y-S'$. This implies that $(u-S')^+\leq (y-S')^+$ and
the above estimate of $y-S'$ leads to the following estimate:
\begin{eqnarray*}E\left\| (u-S')^+\right\|^p_{\infty,\infty;t}\leq k(t)E(\left\|\xi^+-S'_0\right\|^p_\infty+(\left\| \bar{f}^{0,+}\right\|^*_{\theta;t})^p+
(\left\||\bar{g}^0|^2\right\|^*_{\theta;t})^{\frac{p}{2}}+(\left\||\bar{h}^0|^2\right\|^*_{\theta;t})^{\frac{p}{2}}).\end{eqnarray*}
with the estimate of $S'$
\begin{eqnarray*}E\left\|(S')^+\right\|^p_{\infty,\infty;t}\leq k(t)E(\left\|(S'_0)^+\right\|^p_\infty+(\left\|f^{',+}\right\|^*_{\theta;t})^p+
(\left\||g'|^2\right\|^*_{\theta;t})^{\frac{p}{2}}+(\left\||h'|^2\right\|^*_{\theta;t})^{\frac{p}{2}}).\end{eqnarray*}
Therefore, \begin{eqnarray*} E\left\| u^{+}\right\| _{\infty ,\infty
;t}^p&\le &k(t)c(p)E\big(\left\|\xi^+-S'_0\right\|^p_\infty+(\left\|
\bar{f}^{0,+}\right\|^*_{\theta;t})^p+
(\left\||\bar{g}^0|^2\right\|^*_{\theta;t})^{\frac{p}{2}}+(\left\||\bar{h}^0|^2\right\|^*_{\theta;t})^{\frac{p}{2}}\\&+&\left\|(S'_0)^+\right\|^p_\infty+(\left\|f^{',+}\right\|^*_{\theta;t})^p+
(\left\||g'|^2\right\|^*_{\theta;t})^{\frac{p}{2}}+(\left\||h'|^2\right\|^*_{\theta;t})^{\frac{p}{2}}\big).\end{eqnarray*}
\end{proof}

Let us generalize the previous result by onsidering a real It\^o
process of the
form$$M_t=m+\int_0^tb_sds+\sum_{j=1}^{+\infty}\int_0^t\sigma_{j,s}dB_s^j$$
where $m$ is a random variable and $b=(b_t)_{t\geq0}$,
$\sigma=(\sigma_{1,t},...,\sigma_{n,t},...)_{t\geq0}$ are adapted
processes.
\begin{theorem}
\label{maintheo} Suppose that Assumptions {\bf (H)}, {\bf (OL)},
{\bf (HIL)}, {\bf (HOL)},{\bf (HI$\mathbf{2 p}$)}, {\bf
(HO$\mathbf{\infty p}$)} and {\bf (HD$\mathbf{\theta p}$)} hold for
some $\theta \in [0,1[$, $p\geq2$ and that the constants of the
Lipschitz conditions satisfy
$$\alpha +\frac{\beta ^2}2+72\beta ^2<\lambda. $$ Assume also that
$m$ and the processes $b$ and $\sigma $
satisfy the following integrability conditions%
$$ E\left| m\right| ^p<\infty ,\;E\left( \int_0^t\left| b_s\right|
^{\frac 1{1-\theta }}ds\right) ^{p\left( 1-\theta \right) }<\infty
,\;E\left(
\int_0^t\left| \sigma _s\right| ^{\frac 2{1-\theta }}ds\right) ^{\frac{%
p\left( 1-\theta \right) }2}<\infty , $$ for each $t\in[0,T].$ Let
$(u,\nu)\in {\cal R}_{loc}\left( \xi ,f,g,h,S\right) $ be such that
$\left( u-M\right) ^{+}$ belongs to $\HH_T$. Then one has
\begin{eqnarray}\label{MaxiPrinc}E\left\|(u-M)^+\right\|^p_{\infty,\infty;t}&\leq& c(p)k(t)E\big[\left\|(\xi-m)^+-(S'_0-m)\right\|_\infty^p+
\left(\left\|\bar{f}^{0,+}\right\|_{\theta;t}^*\right)^p\nonumber\\&+&\left(\left\|\left|\bar{g}^0\right|^2\right\|_{\theta;t}^*\right)^{\frac{p}{2}}
+\left(\left\|\left|\bar{h}^0\right|^2\right\|_{\theta;t}^*\right)^{\frac{p}{2}}+\left\|(S'_0-m)^+\right\|^p_\infty\\&+&\left(\left\|(f'-b)^+\right\|^*_{\theta;t}\right)^p+\left(\left\||g'|^2\right\|^*_{\theta;t}\right)^{\frac{p}{2}}+\left(\left\||h'-\sigma|^2\right\|^*_{\theta;t}\right)^{\frac{p}{2}}\big]\nonumber\end{eqnarray}
where $k\left( t\right) $ is the constant from the preceding
corollary. The right hand side of this estimate is dominated by the
following quantity which is expressed directly in terms of the
characteristics of the process $M$,
\begin{eqnarray*}
&&c(p)k(t)E\big[\left\|(\xi-m)^+-(S'_0-m)\right\|^p_\infty+\left(\left\|\bar{f}^{0,+}\right\|_{\theta;t}^*\right)^p
+\left(\left\|\left|\bar{g}^0\right|^2\right\|_{\theta;t}^*\right)^{\frac{p}{2}}+\left(\left\|\left|\bar{h}^0\right|^2\right\|_{\theta;t}^*\right)^{\frac{p}{2}}\\&&
\quad\qquad\quad+\left\|(S'_0-m)^+\right\|^p_\infty+\left(\left\|f^{',+}\right\|_{\theta;t}^*\right)^p
+\left(\left\|\left|g'\right|^2\right\|_{\theta;t}^*\right)^{\frac{p}{2}}+\left(\left\|\left|h'\right|^2\right\|_{\theta;t}^*\right)^{\frac{p}{2}}
\\&&\quad\qquad\quad+\left(\int_0^t\left|b_s\right|^{\frac{1}{1-\theta}}ds\right)^{p(1-\theta)}+\left(\int_0^t\left|\sigma_s\right|^{\frac{2}{1-\theta}}ds\right)^{\frac{p(1-\theta)}{2}}
\big].
\end{eqnarray*}
\end{theorem}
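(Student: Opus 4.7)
The plan is to reduce Theorem \ref{maintheo} to Theorem \ref{maxprinc} by translating the solution and all the data by the It\^o process $M$. Set $v_t := u_t - M_t$. Since $M$ is spatially constant, $\nabla v = \nabla u$ and $\partial_i(a_{ij}\partial_j v) = \partial_i(a_{ij}\partial_j u)$. A direct computation shows that $(v,\nu)$ is a local solution of the OSPDE with initial condition $\xi - m$, obstacle $S - M$, and coefficients
$$\tilde{f}(t,\omega,x,y,z) := f(t,\omega,x,y+M_t,z) - b_t,\quad \tilde{g}(t,\omega,x,y,z) := g(t,\omega,x,y+M_t,z),$$
$$\tilde{h}(t,\omega,x,y,z) := h(t,\omega,x,y+M_t,z) - \sigma_t.$$
Because the shift only affects the $y$-variable, these new coefficients satisfy Assumption {\bf (H)} with the \emph{same} Lipschitz constants $C,\alpha,\beta$, so the contraction inequality $\alpha + \beta^2/2 + 72\beta^2 < \lambda$ is preserved. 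Similarly the obstacle $S - M$ is controlled by $S' - M$, which is itself a local solution of a linear SPDE of the same form as \eqref{obstacle} with modified coefficients $f' - b$, $g'$, $h' - \sigma$ and initial datum $S'_0 - m$.

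The hypothesis $(u-M)^+ \in \mathcal{H}_T$ is exactly the statement that $v^+ \in \mathcal{H}_T$, i.e.\ that $v \le 0$ on $\partial\mathcal{O}$. I would then verify that the translated problem still satisfies Assumptions {\bf (OL)}, {\bf (HIL)}, {\bf (HOL)}, {\bf (HI$\mathbf{2p}$)}, {\bf (HO$\mathbf{\infty p}$)} and {\bf (HD$\mathbf{\theta p}$)}: the bounded terms are unchanged, while the contributions of $m$, $b$ and $\sigma$ are controlled by the assumed integrability $E|m|^p < \infty$, $E(\int_0^t |b_s|^{1/(1-\theta)}ds)^{p(1-\theta)}<\infty$ and $E(\int_0^t |\sigma_s|^{2/(1-\theta)}ds)^{p(1-\theta)/2}<\infty$ together with the embedding discussed in the next paragraph. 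Theorem \ref{maxprinc} applied to $(v,\nu)$ then yields \eqref{MaxiPrinc} directly, since the new $\bar f^{0,+}$, $\bar g^0$, $\bar h^0$ reduce to the original ones (the $y$-shift only enters through $\tilde f(\cdot,0,0) - \tilde f(\cdot,M_t,0)$, which vanishes on setting $y=0$ before the shift), and the boundary data for the auxiliary linear SPDE become $(S'_0 - m)^+$, $(f'-b)^+$, $g'$, $h'-\sigma$.

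The final step is the second (more explicit) estimate in the theorem. For this I bound the $\|\cdot\|^*_{\theta;t}$-norms of $b$ and $|\sigma|^2$ by their natural time integrals. Since $\Gamma_\theta^*$ contains the pair $(p,q)=(\infty, 1/(1-\theta))$ and $b$, $\sigma$ do not depend on $x$, one has
$$\|b\|^*_{\theta;t} \le \|b\|_{\infty,1/(1-\theta);t} = \Bigl(\int_0^t |b_s|^{1/(1-\theta)}\,ds\Bigr)^{1-\theta},$$
and similarly $\||\sigma|^2\|^*_{\theta;t} \le (\int_0^t |\sigma_s|^{2/(1-\theta)}\,ds)^{1-\theta}$. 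Splitting $(f'-b)^+ \le f^{',+} + |b|$ and $|h' - \sigma|^2 \le 2|h'|^2 + 2|\sigma|^2$ (and using subadditivity of the $\|\cdot\|^*_{\theta;t}$-norm up to constants depending on $p$), one obtains the second displayed bound. The main obstacle here is the bookkeeping: one has to make sure that each term appearing in \eqref{MaxiPrinc} is really produced by the straightforward change of variables $v = u - M$ and that no Lipschitz or contraction constant is degraded in the process, which is why I emphasize above that the shift occurs only in the $y$-argument and not in $z$.
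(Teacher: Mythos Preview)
Your approach is essentially the same as the paper's: translate by $M$ to reduce to Theorem~\ref{maxprinc}, observe that the barred quantities $\bar{\check f}^0,\bar{\check g}^0,\bar{\check h}^0$ for the shifted problem coincide with the original $\bar f^0,\bar g^0,\bar h^0$ (because the $M$-shift and the $(S'-M)$-shift cancel), and then bound the $b,\sigma$ contributions via the extremal pair $(\infty,1/(1-\theta))\in\Gamma_\theta^*$. Your parenthetical explanation of why the barred zero-terms are unchanged is a bit garbled, but the computation itself is just $\tilde f(S'-M,\nabla(S'-M))-(f'-b)=f(S',\nabla S')-b-f'+b=\bar f^0$, exactly as in the paper.
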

\begin{proof}
One immediately observes that $u-M$ belongs to $\mathcal{ R}_{loc}\left( \xi -m,%
\check{f},\check{g},\check{h}, S-M\right) ,$ where%
$$ \check{f}\left( t,\omega ,x,y,z\right) =f\left( t,\omega
,x,y+M_t\left( \omega \right) ,z+\nabla M_t(\omega)\right)
-b_t\left( \omega \right) ,
$$ $$ \check{g}\left( t,\omega ,x,y,z\right) =g\left( t,\omega
,x,y+M_t\left( \omega \right) ,z+\nabla M_t(\omega)\right) , $$ $$
\check{h}\left( t,\omega ,x,y,z\right) =h\left( t,\omega
,x,y+M_t\left( \omega \right) ,z+\nabla M_t(\omega)\right) -\sigma
_t\left( \omega \right) . $$ In order to apply the preceding theorem
we only have to estimate the zero terms of the following functions:
$$ \bar{\check{f}}\left( t,\omega ,x,y,z\right) =\check{f}\left( t,\omega
,x,y+S'-M, z+\nabla(S'-M)\right) - f'(t,\omega,x)+b_t(\omega), $$
$$\bar{\check{g}}\left( t,\omega ,x,y,z\right) =\check{g}\left( t,\omega
,x,y+S'-M, z+\nabla(S'-M)\right) - g'(t,\omega,x), $$
$$\bar{\check{h}}\left( t,\omega ,x,y,z\right) =\check{h}\left( t,\omega
,x,y+S'-M, z+\nabla(S'-M)\right) - h'(t,\omega,x)+\sigma_t(\omega).
$$ So we have: $$\bar{\check{f}}_t^0=\check{f}_t(S'-M,\nabla
(S'-M))-f'_t+b_t=f_t(S',\nabla S')-f'_t=\bar{f}^0\, ,$$
$$\bar{\check{g}}_t^0=\check{g}_t(S'-M,\nabla (S'-M))-g'_t=g_t(S',\nabla S')-g'_t=\bar{g}^0\, ,$$
$$\bar{\check{h}}_t^0=\check{h}_t(S'-M,\nabla (S'-M))-h'_t+\sigma_t=h_t(S',\nabla S')-h'_t=\bar{h}^0\, .$$
Therefore, applying the preceding theorem to $u-M$, we obtain
(\ref{MaxiPrinc}).
\\On the other hand,
one has the following estimates:
$$\left\|(f'-b)^+\right\|_{\theta;t}^*\leq c\left[\left\|f^{',+}\right\|_{\theta;t}^*+\left(\int_0^t\left|b_s\right|^{\frac{1}{1-\theta}}ds\right)^{1-\theta}\right],$$
$$\left\||h'-\sigma|^2\right\|^*_{\theta;t})^{\frac{p}{2}}\leq c\left[\left(\left\||h'|^2\right\|^*_{\theta;t}\right)^{\frac{p}{2}}+\left(\int_0^t\left|\sigma_s\right|^{\frac{2}{1-\theta}}ds\right)^{1-\theta}\right].$$
This allows us to conclude the proof.
\end{proof}

\section{Appendix}
\label{tecnicLem} In this section, we  prove some technical lemmas
that  we need  in the Step 2 of the proof of Theorem \ref{LPESTIM}.
For simplicity, we put, for fixed $n\leq m$,
$\hat{u}:=\bar{u}^n-\bar{u}^m$, $\hat{\xi}:=\xi^n-\xi^m$,
$\hat{f}(t,\omega,x,y,z):=\bar{f}_{n,m}(t,\omega,x,y,z)$ and similar
for $\hat{g}$ and $\hat{h}$.

We recall that the initial value $\hat{\xi}$ and $\hat{f}^0,\
\hat{g}^0,\ \hat{h}^0$ are uniformly bounded.
\begin{lemma}Denote
\begin{eqnarray*}
K=\left\|
\hat{\xi}\right\|_{L^\infty(\Omega\times\cO)}\vee\left\|\hat{f}^0\right\|_{L^\infty(\mathbb{R}_+\times\Omega\times\cO)}\vee\left\|\hat{g}^0\right\|_{L^\infty(\mathbb{R}_+\times\Omega\times\cO)}\vee\left\|\hat{h}^0\right\|_{L^\infty(\mathbb{R}_+\times\Omega\times\cO)
}.\end{eqnarray*} Then there exist constants $c,\ c'>0$ which only
depend on $K,\ C,\ \alpha,\ \beta$ such that, for all real $l\geq2$,
one has
\begin{equation}\label{diffl}E\int_\cO|\hat{u}_t(x)|^ldx\leq cK^2l(l-1)e^{cl(l-1)t},
\end{equation}
\begin{equation}\label{diffl2}E\int_0^t\int_\cO|\hat{u}_s(x)|^{l-2}|\nabla\hat{u}_s(x)|^2dxds\leq c'K^2l(l-1)e^{cl(l-1)t},\end{equation}
and\begin{equation}\label{difflm}E\int_0^t\int_\cO
|\hat{u}_s(x)|^{l-1}(\nu^n+\nu^m)(dxds)<+\infty .\end{equation}
\end{lemma}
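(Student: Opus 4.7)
The plan is to adapt the proof of Lemma \ref{estimateul} to the difference $\hat{u} = \bar{u}^n - \bar{u}^m$. Since $(\bar{u}^n,\nu^n)$ and $(\bar{u}^m,\nu^m)$ both solve the reflected SPDE with the common obstacle $S-S'$, the pair $(\hat{u},\nu^n-\nu^m)$ satisfies a quasilinear SPDE whose coefficients $\hat{f},\hat{g},\hat{h}$ inherit the same Lipschitz constants $C,\alpha,\beta$ as $\bar{f},\bar{g},\bar{h}$, while the zero-terms $\hat{f}^0 = \bar{f}_n^0 - \bar{f}_m^0$ (and analogously for $\hat{g}^0, \hat{h}^0$) are uniformly bounded (by $2K$). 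The natural step is to apply It\^o's formula (Theorem 5 in \cite{DMZ12}) to $\varphi_n(\hat{u})$, with $\varphi_n$ the same smooth regularization of $|\cdot|^l$ used in Lemma \ref{estimateul}.

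The delicate ingredient is the contribution of the signed measure in the It\^o identity, namely
$$\int_0^t\!\!\int_\cO \varphi'_n(\hat{u}_s)\, d\nu^n \;-\; \int_0^t\!\!\int_\cO \varphi'_n(\hat{u}_s)\, d\nu^m.$$
Here the Skorohod minimality of each reflected solution is decisive: on the support of $\nu^n$ one has $\bar{u}^n = S-S'$ while $\bar{u}^m \geq S-S'$, so $\hat{u}\leq 0$ there; symmetrically $\hat{u}\geq 0$ on the support of $\nu^m$. Since $\varphi'_n$ carries the sign of its argument, the first integral is non-positive and, after the minus sign, so is the second. Consequently both measure terms can be discarded in any upper bound on $\int_\cO \varphi_n(\hat{u}_t)\,dx + \gamma\int_0^t\!\int_\cO \varphi''_n(\hat{u}_s)|\nabla\hat{u}_s|^2\,dx\,ds$.

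Once the measure terms are thrown away, the treatment of the $\hat{f},\hat{g},\hat{h}$ terms through the Lipschitz conditions, the properties of $\varphi_n$ listed in Lemma \ref{estimateul}, Young's inequality and the contraction assumption $2\alpha+\beta^2<2\lambda$ (which lets the coercive energy term absorb the gradient contributions) is a direct transcription of the argument there, with the uniform bound $K$ on $\hat{f}^0, \hat{g}^0, \hat{h}^0$ playing the role previously played by $\|\bar{f}^0\|_\infty$ etc. Gronwall's lemma applied to $E\int_\cO \varphi_n(\hat{u}_t)\,dx$ followed by Fatou's lemma as $n\to\infty$ in the regularization yields \eqref{diffl} and \eqref{diffl2}.

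For \eqref{difflm}, the sign analysis above shows that both quantities
$$-\int_0^t\!\!\int_\cO \varphi'_n(\hat{u}_s)\, d\nu^n \;\geq\; 0 \qquad\text{and}\qquad \int_0^t\!\!\int_\cO \varphi'_n(\hat{u}_s)\, d\nu^m \;\geq\; 0$$
appear with the same sign in the It\^o identity, so each of them is separately dominated in expectation by the right-hand side already estimated above. Passing to the limit in $n$ via Fatou's lemma delivers the finiteness of $E\int_0^t\!\int_\cO |\hat{u}_s|^{l-1}\,d\nu^n$ and of the analogous integral against $\nu^m$, hence against $\nu^n+\nu^m$. The only real obstacle is the sign argument for the signed measure $\nu^n-\nu^m$; beyond that, the proof is a routine reprise of Lemma \ref{estimateul}.
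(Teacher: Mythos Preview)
Your proposal is correct and follows essentially the same route as the paper's proof: apply the It\^o formula to $\varphi_n(\hat u)$, use the Skorohod condition on each of $\nu^n,\nu^m$ together with the oddness of $\varphi_n'$ to discard the signed-measure term, then repeat verbatim the $\hat f,\hat g,\hat h$ estimates and the Gronwall/Fatou argument of Lemma~\ref{estimateul}; for \eqref{difflm} both you and the paper bound the two non-negative pieces of $-\int\varphi_n'(\hat u)\,d(\nu^n-\nu^m)$ separately by the already-controlled right-hand side. The only minor slip is the citation: the It\^o formula you need is the one for the \emph{difference} of two obstacle solutions (Theorem~6 in \cite{DMZ12}), not Theorem~5.
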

\begin{proof}
Beginning from the It\^o formula for the difference of solutions of
two obstacle problems which has been proved in \cite{DMZ12}: we take
the same $\varphi_n$ as in the proof of Lemma \ref{estimateul},
\begin{equation}\label{difflp}
\begin{split}
& \int_{\mathcal{O}} \varphi_n (\hat{u}_t (x)) \, dx \; + \;
\int_0^t \mathcal{E} \big(\varphi_n^{\prime} (\hat{u}_s), \,
\hat{u}_s \big) \, ds=\int_\cO\varphi_n(\hat{\xi}(x))dx +
\int_{0}^{t}\int_{\mathcal{O}}
\varphi_n^{\prime}(\hat{u}_s(x))\, \hat{f}(s,x)\, dx ds \\
&  - \sum_{i=1}^{d} \int_{0}^{t} \int_{\mathcal{O}}
\varphi_n^{\prime \prime } (\hat{u}_s(x))
\partial_i (\hat{u}_s(x)))\,\hat{g}_i(s,x)\, dx \, ds +
\sum_{j=1}^{\infty} \int_{0}^{t} \int_{\mathcal{O}}
\varphi_n^{\prime} (\hat{u}_s(x)) \, \hat{h}_j(s,x)\, dx dB_{s}^{j}\\
& + \frac{1}{2}\, \sum_{j=1}^{\infty} \int_{0}^{t}\int_{\mathcal{O}}
\varphi_n^{\prime \prime } (\hat{u}_s(x))\, \hat{h}_j^2(s,x) \, dx\,
ds
+\int_0^t\int_\cO\varphi_n^{\prime}(\hat{u}_s(x))\, (\nu^n-\nu^m)(dx\, ds)\, ,\quad a.s.\\
\end{split}
\end{equation}
The support of $\nu^n$ is $\{\bar{u}^n=S\}$ and the support of
$\nu^m$ is $\{\bar{u}^m=S\}$, so the last term is equal to
$$\int_0^t\int_\cO\varphi_n^{\prime}(S_s(x)-\bar{u}^m_s(x))\, \nu^n(dx\,
ds)-\int_0^t\int_\cO\varphi_n^{\prime}(\bar{u}^n_s(x)-S_s(x))\,
\nu^m(dx\, ds)$$ and the fact that $\varphi'_n(x)\leq0$, when
$x\leq0$ and $\varphi'_n(x)\geq0$, when $x\geq0$, ensure that the
last term is always negative.
\\By the uniform ellipticity of the operator $A$, we get
\begin{equation*}
\mathcal{E} \big(\varphi_n^{\prime} (\hat{u}_s), \, \hat{u}_s \big)
\, \geq \, \lambda  \, \int_{\cO} \varphi_n^{\prime \prime}
(\hat{u}_s) |\nabla \hat{u}_s|^2 \, dx.\end{equation*}

Let $\epsilon >0$ be fixed. Using the Lipschitz condition on
$\hat{f}$ and the properties of the functions $(\varphi_n)_n$ we get
\begin{equation*} \begin{split}   & |\varphi_n^{\prime} (\hat{u}_s)| \, |\hat{f}(s,x)|
\leq l (\varphi_n (\hat{u}_s) + 1) \, |\hat{f}^0| + (C +
 c_{\epsilon})\, |\hat{u}_s|^2
 \varphi_n^{\prime\prime}(\hat{u}_s)  + \, \epsilon \varphi_n^{\prime\prime}(\hat{u}_s)|\nabla(\hat{u}_s)|^2 .\\
 \end{split}
 \end{equation*}
 Now using Cauchy-Schwarz inequality and the Lipschitz condition on $\hat{g}$ we get
 \begin{equation*}
 \begin{split}
 & \,\sum_{i=1}^d \varphi_n^{\prime\prime}(\hat{u}_s) \partial_i (\hat{u}_s)
 \, \hat{g}(s,x)  \leq   l(l-1)c_{\epsilon}K^2 + 2c_{\epsilon}(K^2 + C^2 ) l (l-1)
 |\varphi_n (\hat{u}_s)|+
 (\alpha + \epsilon) \, \varphi_n^{\prime\prime} (\hat{u}_s)
 |\nabla (\hat{u}_s)|^2 .  \\
 \end{split}
 \end{equation*}
  In the same way as before
\begin{equation*}
 \begin{split}&
\sum_{j=1}^{\infty}  \varphi_n^{\prime \prime } (\hat{u}_s )\,
\hat{h}(s,x) \leq \, 2c'_{\epsilon} l (l-1)K^2 +  2c'_{\epsilon}(
K^2 +C^2 )l (l-1) \varphi_n(\hat{u}_s)
 + (1 + \epsilon) \, \beta^2\, \varphi_n^{\prime \prime }(\hat{u}_s)|\nabla (\hat{u}_s)|^2 .
\end{split}
 \end{equation*}
Thus taking the expectation, we deduce
\begin{equation*}
\begin{split}
&E \, \int_{\mathcal{O}}\varphi_n (\hat{u}_t(x)) \, dx + (\lambda -
\frac{1}{2}(1+ \epsilon)\beta^2 - (\alpha + 2 \epsilon) \, ) \, E \,
\int_0^t \int_{\mathcal{O}} \varphi_n^{\prime \prime} (\hat{u}_s) \,
|\nabla( \hat{u}_s) |^2  \, dx \, ds \\
& \, \leq \, l(l-1)c''_{\epsilon} K^2 \, + \,  c''_{\epsilon} l
(l-1)\big( K^2 +C^2+C+ c_{\epsilon}\big)
E \, \int_0^t \int_{\mathcal{O}} \varphi_n (\hat{u}_s(x))\, dx \, ds . \\
\end{split}
\end{equation*}
On account  of the contraction condition, one can choose $ \epsilon
> 0 $ small enough such that $$ \lambda - \frac{1}{2}(1+
\epsilon)\beta^2 - (\alpha + 2 \epsilon)>0 $$  and then
\begin{equation*}
\begin{split}
E\, \int_{\mathcal{O}} \varphi_n (\hat{u}_t(x)) \, dx    &
 \leq \, c K^2 l (l-1) \, + \,  cl (l-1)
E \, \int_0^t \int_{\mathcal{O}} \varphi_n (\hat{u}_s(x))\, dx \, ds \, .\\
\end{split}
\end{equation*}
 We obtain by Gronwall's Lemma, that
\begin{equation*}
\begin{split}
E\, \int_{\mathcal{O}} \varphi_n (\hat{u}_t(x)) \, dx    &
 \leq \, c\,K^2  l (l-1) \, \exp{ \big( c \, l (l-1)\,t \big) }, \\
\end{split}
\end{equation*}
 and  so it is easy to get
\begin{equation*}
\begin{split}
  E \, \int_0^t \int_{\mathcal{O}} \varphi_n^{\prime \prime} (\hat{u}_s(x))  \,
|\nabla \hat{u}_s |^2  \, dx \, ds  \, \leq \,
 c^{\prime}\, K^2 l \, (l-1) \,  \exp{ \big( c l (l-1)\,t \big)}.\\
\end{split}
\end{equation*}
Then, letting $ n \to \infty $,  by Fatou's lemma we get
(\ref{diffl}) and (\ref{diffl2}).\\From (\ref{difflp}), we know that
\begin{equation*}-\int_0^t\int_\cO\varphi'_n(\hat{u}_s(x))(\nu^n-\nu^m)(dxds)\leq C.\end{equation*}
Moreover,
\begin{eqnarray*}&&-\int_0^t\int_\cO\varphi'_n(\hat{u}_s(x))(\nu^n-\nu^m)(dxds)\\&=&-\int_0^t\int_\cO\varphi_n^{\prime}(S_s(x)-\bar{u}^m_s(x))\,
\nu^n(dx\,
ds)+\int_0^t\int_\cO\varphi_n^{\prime}(\bar{u}^n_s(x)-S_s(x))\,
\nu^m(dx\,
ds)\\&=&\int_0^t\int_\cO\varphi_n^{\prime}(\bar{u}^m_s(x)-S_s(x))\,
\nu^n(dx\,
ds)+\int_0^t\int_\cO\varphi_n^{\prime}(\bar{u}^n_s(x)-S_s(x))\,
\nu^m(dx\, ds)\end{eqnarray*} By Fatou's lemma, we obtain
\begin{eqnarray*}
\int_0^t\int_\cO|\bar{u}^m_s(x)-S_s(x)|^{l-1}\nu^n(dxds)+\int_0^t\int_\cO|\bar{u}^n(x)-S_s(x)|^{l-1}\nu^m(dxds)<+\infty,
\ a.s.
\end{eqnarray*}
which is exactly (\ref{difflm}).
\end{proof}

\begin{lemma}One has the following formula for $\hat{u}$: $\forall t\geq0$, almost surely,
\begin{equation}\label{It\^o'sdiff}
\begin{split}
&  \int_{\mathcal{O}}\left| \hat{u}_t(x) \right|^l  \, dx  +
\int_0^t \cE \, \big(l\, (\hat{u}_s)^{l -1}sgn(\hat{u}_s) , \,
\hat{u}_s \big) \, ds = \int_\cO\left|\hat{\xi}(x)\right|^ldx\\
& + l\int_{0}^{t}\int_{\mathcal{O}}sgn(\hat{u}_s)
 \left| \hat{u}_s(x) \right|^{l-1} \, \hat{f}(s,x) \, dx ds -l (l-1) \, \sum_{i=1}^{d} \,\int_{0}^{t} \int_{\mathcal{O}} \left|\hat{u}_s
(x) \right|^{l -2}
\partial_i (\hat{u}_s(x))\, \hat{g}_i(s,x)\, dx \, ds \\
& + l \, \sum_{j=1}^{\infty} \int_{0}^{t}
\int_{\mathcal{O}}sgn(\hat{u}_s)
 \,  \left|\hat{u}_s(x) \right|^{l-1}\, \hat{h}_j(s,x)\, dx dB_{s}^{j}+ \frac{l (l-1)}{2}\, \sum_{j=1}^{\infty}
\int_{0}^{t}\int_{\mathcal{O}}
 \left|\hat{u}_s(x) \right|^{l-2}\, \hat{h}_j^2(s,x,)\, dx\, ds\\&+l\int_{0}^{t}\int_{\mathcal{O}}sgn(\hat{u}_s)
 \left|\hat{u}_s(x) \right|^{l-1} (\nu^1-\nu^2)(dx\, ds)\, .\\
\end{split}
\end{equation}
\end{lemma}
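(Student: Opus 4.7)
The strategy is to revisit the It\^o formula (\ref{difflp}), which was already applied to $\varphi_n(\hat{u})$ in the proof of the previous lemma with the sequence of $\cC^2$ functions $(\varphi_n)$ used in Lemma \ref{estimateul}, and pass to the limit $n\to\infty$. Recall that $\varphi_n(x)\to|x|^l$, $\varphi_n'(x)\to l\operatorname{sgn}(x)|x|^{l-1}$ and $\varphi_n''(x)\to l(l-1)|x|^{l-2}$ pointwise, together with the uniform bounds $|x\varphi_n'(x)|\le l\varphi_n(x)$, $|\varphi_n'(x)|\le l(\varphi_n(x)+1)$ and $|\varphi_n''(x)|\le l(l-1)(\varphi_n(x)+1)$. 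The aim is to use dominated convergence (both termwise and in $L^1(\Omega)$ where appropriate), with the integrability of the limits being precisely what the previous lemma supplies through (\ref{diffl}), (\ref{diffl2}) and (\ref{difflm}).

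The termwise convergence proceeds as follows. The Lebesgue term $\int_{\cO}\varphi_n(\hat u_t)\,dx$ and the initial-data term converge by monotone/dominated convergence together with (\ref{diffl}). For the energy term, one expands $\cE(\varphi_n'(\hat u_s),\hat u_s)=\int_{\cO}\sum_{i,j}a^{i,j}\varphi_n''(\hat u_s)\partial_i\hat u_s\partial_j\hat u_s\,dx$, dominated by $\Lambda l(l-1)(|\hat u_s|^{l-2}+1)|\nabla\hat u_s|^2$, whose integral is controlled via (\ref{diffl}) and (\ref{diffl2}); this gives convergence to $\cE(l\operatorname{sgn}(\hat u_s)|\hat u_s|^{l-1},\hat u_s)$ after a standard algebraic manipulation. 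The deterministic integrals involving $\hat f$, $\hat g$ and $\hat h^2$ are bounded by products of $|\hat u_s|^{l-1}$, $|\hat u_s|^{l-2}|\nabla\hat u_s|$, or $|\hat u_s|^{l-2}$ with the Lipschitz bounds on the coefficients and the uniform bounds on $\hat f^0,\hat g^0,\hat h^0$; all these are dominated by integrable quantities by (\ref{diffl})--(\ref{diffl2}). The stochastic integral converges in probability by the Burkholder-Davis-Gundy inequality applied to the difference between the $n$-th approximation and the limiting integrand.

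The main obstacle is the measure term $\int_0^t\int_{\cO}\varphi_n'(\hat u_s)(\nu^n-\nu^m)(dx\,ds)$, since $\nu^n$ and $\nu^m$ are not absolutely continuous with respect to $dt\otimes dx$ and $\varphi_n'(\hat u_s)$ is not a priori uniformly bounded in $n$. The key observation, already used in the previous lemma, is that by the Skohorod condition the support of $\nu^n$ (respectively $\nu^m$) is contained in $\{\bar u^n=S\}$ (respectively $\{\bar u^m=S\}$), so the integrand rewrites as
\begin{equation*}
\varphi_n'(S-\bar u^m)\,\1_{\{\bar u^n=S\}}\,\nu^n(dx\,ds)
\;-\;\varphi_n'(\bar u^n-S)\,\1_{\{\bar u^m=S\}}\,\nu^m(dx\,ds).
\end{equation*}
On these sets $|S-\bar u^m|=|\hat u|$ and $|\bar u^n-S|=|\hat u|$, hence $|\varphi_n'(\cdot)|\le l(|\hat u|^{l-1}+1)$, and the dominating function $l(|\hat u|^{l-1}+1)$ is $(\nu^n+\nu^m)$-integrable by (\ref{difflm}). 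Pointwise convergence of $\varphi_n'$ to $l\operatorname{sgn}(\cdot)|\cdot|^{l-1}$ together with the dominated convergence theorem applied with respect to the measure $\nu^n+\nu^m$ then produces the desired limit
\begin{equation*}
l\int_0^t\int_{\cO}\operatorname{sgn}(\hat u_s)|\hat u_s(x)|^{l-1}(\nu^n-\nu^m)(dx\,ds),
\end{equation*}
and assembling all the limits yields (\ref{It\^o'sdiff}).
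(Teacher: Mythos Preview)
The proposal is correct and follows essentially the same approach as the paper: apply the It\^o formula for the difference of two obstacle solutions with the approximating functions $\varphi_n$ (this is exactly relation (\ref{difflp})) and pass to the limit $n\to\infty$ by dominated convergence, the paper's own proof being little more than this one sentence. Your treatment of the measure term is more explicit than the paper's and is the right way to justify that passage; note that in fact $|\varphi_n'(x)|\le l|x|^{l-1}$ and $|\varphi_n''(x)|\le l(l-1)|x|^{l-2}$ uniformly in $n$, so the ``$+1$'' in your dominating functions is unnecessary (though harmless, since the corresponding integrals are also finite).
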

\begin{proof}
From the It\^o formula for the difference of two solutions (see
Theorem 6 in \cite{DMZ12}), we have $ P$-almost surely for all $t\in
[0,T]$ and $ n \in \bbN^*$
\begin{equation*}
\begin{split}
& \int_{\mathcal{O}} \varphi_n (\hat{u}_t(x)) \, dx \; + \; \int_0^t
\mathcal{E} \big(\varphi_n^{\prime} (\hat{u}_s), \, \hat{u}_s \big)
\, ds =\int_\cO\varphi_n(\hat{\xi}(x))dx\\&+
\int_{0}^{t}\int_{\mathcal{O}} \varphi_n^{\prime}(\hat{u}_s(x))\,
\hat{f}(s,x) \, dx ds  - \sum_{i=1}^{d} \int_{0}^{t}
\int_{\mathcal{O}} \varphi_n^{\prime \prime } (\hat{u}_s(x))
\partial_i \hat{u}_s(x) \, \hat{g}_i(s,x) \, dx \, ds \\& +
\sum_{j=1}^{\infty} \int_{0}^{t} \int_{\mathcal{O}}
\varphi_n^{\prime} (\hat{u}_s(x)) \, \hat{h}_j(s,x)\, dx dB_{s}^{j}+
\frac{1}{2}\, \sum_{j=1}^{\infty} \int_{0}^{t}\int_{\mathcal{O}}
\varphi_n^{\prime \prime } (\hat{u}_s(x))\, \hat{h}_j^2(s,x) \, dx\,
ds\\&
+\int_0^t\int_\cO\varphi_n^{\prime}(\hat{u}_s(x))(\nu^1 -\nu^2)(dxds) \, .\\
\end{split}
\end{equation*}
Then, passing to the limit as $ n \to \infty $, the convergences
come from the dominated convergence theorem.
\end{proof}

Similar as before,  we define the processes $\hat{v}$ and $\hat{v}'$
by
\begin{equation*}
\begin{split}
\hat{v}_t :&=\sup _{s\leq t}\left( \int_{\cO}\left|
\hat{u}_s\right|^l dx +\gamma l\left( l-1\right)
\int_0^s\int_{\cO}\left| \hat{u}_r\right|
^{l-2}\left| \nabla \hat{u}_r\right| ^2\, dx\, dr\right)  \\
 \hat{v}_t^{\prime } :&=\int_\cO\left| \hat{\xi}\right|^ldx+l^2 c_1
\left\| \left| \hat{u}\right| ^l\right\| _{1,1;t}
+l\left\|\hat{f}^0\right\| _{\theta ,t}^{*}\left\| \left| \hat{u}
\right| ^{l-1}\right\| _{\theta ;t} \\&+ l^2 \left( c_2\left\|
|\hat{g}^0 |^2\right\| _{\theta;t}^{*}
 + c_3\left\| |\hat{h}^0 |^2\right\| _{\theta ;t}^{*}\right) \left\|
\left| \hat{u}\right|^{l-2}\right\|_{\theta ;t},\\
\end{split}
\end{equation*}
where above and below $\gamma$, $c_1$, $c_2$ and $c_3$ are the constants given by relations \eqref{const}.\\
We remark first that the last term in  (\ref{It\^o'sdiff})  is non
positive,
indeed:\begin{eqnarray*}&&\int_{0}^{t}\int_{\mathcal{O}}sgn(\hat{u}_s)
 \left| S_s-u^2_s (x) \right|^{l-1} (\nu^1-\nu^2)(dx\, ds)\\&=&\int_{0}^{t}\int_{\mathcal{O}}sgn(S_s-u^2_s)
 \left|S_s-u^2_s (x)   \right|^{l-1} \nu^1(dx\, ds)\\&&-\int_{0}^{t}\int_{\mathcal{O}}sgn(u^1_s-S_s)
 \left|u^1_s (x)-S_s(x) \right|^{l-1} \nu^2(dx\, ds)\leq0.\end{eqnarray*}
Then applying the same  proof as the one of Lemma \ref{tau}, we
obtain:\begin{equation*}
\begin{split}
& \tau E \sup _{0 \leq s\leq t}\left( \int_{\cO}\left|
\hat{u}_s\right|^l\, dx +\gamma l\left( l-1\right)
\int_0^s\int_{\cO}\left| \hat{u}_r\right| ^{l-2}\left| \nabla
\hat{u}_r\right| ^2dx dr\right) \\& \leq E\int_\cO\left|
\hat{\xi}\right|^ldx+ l^2c_1 E \left\| \left| \hat{u}\right|
^l\right\| _{1,1;t} +l E \left\|\hat{f}^0\right\| _{\theta
,t}^{*}\left\| \left| \hat{u}\right| ^{l-1}\right\| _{\theta
;t}\\&\quad+l^2 E\left( c_2 \left\| |\hat{g}^0 |^2\right\| _{\theta
;t}^{*}+c_3\left\| |\hat{h}^0 |^2 \right\| _{\theta ;t}^{*}\right)
\left\| \left| \hat{u}\right|^{l-2}\right\|_{\theta ;t}.
\end{split}
\end{equation*}
and this yields that the process $\tau \hat{v}$ is dominated by $\hat{v}'$.\\
Starting from here, we can repeat line by line the proofs of Lemmas
15-17 in \cite{DMS05} and apply the Moser iteration as at the end of
Subsection \ref{casborn} to obtain the desired estimations, namely:
\begin{lemma}{\label{Lemme63}}
There exists a function $k_2:\bbR_{+}\rightarrow \bbR_{+}$ which
involves only the structure constants of our problem and such that
the
following estimate holds%
$$
E\|\hat{u}\|^p_{\infty,\infty :t}\le k_2\left( t\right) E\left(
\left\| \hat{\xi}\right\| ^p +\left\|\hat{f}^0\right\| _{\theta
;t}^{*p}+\left\| |\hat{g}^0 |^2 \right\| _{\theta
;t}^{*\frac{p}{2}}+\left\| |\hat{h}^0 |^2 \right\| _{\theta
;t}^{*\frac{p}{2}}\right) .
$$
\end{lemma}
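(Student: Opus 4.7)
The plan is to mimic verbatim the Moser iteration of Subsection \ref{casborn}, substituting $\hat u=\bar u^n-\bar u^m$ for $u-S'$ and $\hat f^0,\hat g^0,\hat h^0$ for $\bar f^0,\bar g^0,\bar h^0$. All the preparatory work has already been carried out in the paragraphs preceding the statement: the It\^o formula \eqref{It\^o'sdiff} for $|\hat u|^l$, the sign computation that makes $\int_0^t\!\int_\cO \varphi_n'(\hat u_s)(\nu^n-\nu^m)(dx\,ds)$ non-positive (thanks to $\mathrm{supp}\,\nu^n\subset\{\bar u^n=S\}$ and $\mathrm{supp}\,\nu^m\subset\{\bar u^m=S\}$), and the consequent domination of $\tau\hat v$ by $\hat v'$, which is the exact analogue of Lemma \ref{tau}. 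The sign computation for the obstacle term is the one conceptual obstacle here, and it is already in place.

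First, I would transcribe the analogues of Lemmas \ref{sob}, \ref{sup} and \ref{estimev}. Since these are purely analytic consequences of the Sobolev inequality together with the domination of $\tau\hat v$ by $\hat v'$, and no structural constant changes under the substitution, each statement carries over word for word. Explicitly one obtains $\hat v_t\ge \delta\,\|\,|\hat u|^l\|_{0;t}$, the fact that
$$\hat w_t := \|\,|\hat u|^{\sigma l}\|_{\theta;t}^{1/\sigma}\vee\|\hat\xi\|_\infty^l\vee\|\hat f^0\|_{\theta;t}^{*l}\vee\|\,|\hat g^0|^2\|_{\theta;t}^{*l/2}\vee\|\,|\hat h^0|^2\|_{\theta;t}^{*l/2}$$
is dominated by $6k(t)l^2\hat w'_t$ with the obvious $\hat w'_t$, and the a priori bound
$$E\hat v_t\le k_1(l,t)\,E\bigl(\textstyle\int_\cO|\hat\xi|^l\,dx+\|\hat f^0\|_{\theta;t}^{*l}+\|\,|\hat g^0|^2\|_{\theta;t}^{*l/2}+\|\,|\hat h^0|^2\|_{\theta;t}^{*l/2}\bigr).$$

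Next, I would launch the iteration with $l=p\sigma^n$, $\sigma=(d+2\theta)/d$, and
$$\hat a_n := \|\,|\hat u|^{p\sigma^n}\|_{\theta;t}^{1/\sigma^n}\vee\|\hat\xi\|_\infty^p\vee\|\hat f^0\|_{\theta;t}^{*p}\vee\|\,|\hat g^0|^2\|_{\theta;t}^{*p/2}\vee\|\,|\hat h^0|^2\|_{\theta;t}^{*p/2}.$$
Combining the domination inequality \eqref{domination} with the bound on $\hat w$ produces the recursion $E\hat a_{n+1}\le \sigma^{n/\sigma^n}(1-\sigma^{-n})^{-1}\bigl(6k(t)(p\sigma^n)^2\bigr)^{1/\sigma^n}E\hat a_n$. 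Iterating this relation, invoking the convergence of $\sum m\sigma^{-m}$ and $\sum\sigma^{-m}$, and letting $n\to\infty$ so that $\|\,|\hat u|^{p\sigma^n}\|_{\theta;t}^{1/\sigma^n}\to\|\hat u\|_{\infty,\infty;t}^p$, yields $E\|\hat u\|_{\infty,\infty;t}^p\le \rho(t)\,E\hat a_1$ for some structural function $\rho$.

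To conclude, exactly as at the end of Subsection \ref{casborn}, the Sobolev-type inequality $\delta\,\|\,|\hat u|^{p\sigma}\|_{\theta;t}^{1/\sigma}\le \hat v_t$ (with $l=p$ in the definition of $\hat v$), combined with the analogue of Lemma \ref{estimev}, bounds $E\hat a_1$ by $c\,E\bigl(\|\hat\xi\|_\infty^p+\|\hat f^0\|_{\theta;t}^{*p}+\|\,|\hat g^0|^2\|_{\theta;t}^{*p/2}+\|\,|\hat h^0|^2\|_{\theta;t}^{*p/2}\bigr)$, which delivers the claimed estimate with $k_2(t)=c\rho(t)$. Everything beyond the sign-dropping of the obstacle contribution, already settled above the statement, is a mechanical transcription of the arguments used in the case $u-S'$.
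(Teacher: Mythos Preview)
Your proposal is correct and follows essentially the same approach as the paper: the paper itself says that ``we can repeat line by line the proofs of Lemmas 15--17 in \cite{DMS05} and apply the Moser iteration as at the end of Subsection~\ref{casborn}'', which is precisely the transcription you outline, with the non-positivity of the obstacle term $\int_0^t\!\int_\cO \mathrm{sgn}(\hat u_s)|\hat u_s|^{l-1}(\nu^n-\nu^m)(dx\,ds)$ already established just before the lemma.
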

\begin{lemma}\label{estimvdiff}
There exists a function $k_1:\bbR_+\times\bbR_+\rightarrow\bbR_+$
which involves only the structure constants of our problem and such
that the following estimate holds
\begin{eqnarray*}
E\hat{v}_t\leq
k_1(l,t)E\left(\int_\cO|\hat{\xi}|^ldx+\left\|\hat{f}^0\right\|_{\theta;t}^{*l}+\left\|\left|\hat{g}^0\right|^2\right\|_{\theta;t}^{*\frac{l}{2}}+\left\|\left|\hat{h}^0\right|^2\right\|_{\theta;t}^{*\frac{l}{2}}\right).
\end{eqnarray*}
\end{lemma}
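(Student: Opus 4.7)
The plan is to mirror line-by-line the proof of Lemma \ref{estimev} in the previous subsection, transferring every step to the differences $\hat{u}$, $\hat{\xi}$, $\hat{f}$, $\hat{g}$, $\hat{h}$. The It\^o formula \eqref{It\^o'sdiff} and the domination of $\tau \hat{v}$ by $\hat{v}'$ have just been established, so the only remaining work is to turn the latter into an explicit estimate in terms of the data.

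First I would apply the domination at the bounded stopping time $t$ to obtain $\tau E\hat{v}_t \leq E\hat{v}'_t$, then expand $E\hat{v}'_t$ according to its definition. The three mixed products $\|\hat{f}^0\|^*_{\theta;t}\|\hat{u}^{l-1}\|_{\theta;t}$, $\||\hat{g}^0|^2\|^*_{\theta;t}\|\hat{u}^{l-2}\|_{\theta;t}$ and $\||\hat{h}^0|^2\|^*_{\theta;t}\|\hat{u}^{l-2}\|_{\theta;t}$ are then split by Young's inequality with conjugate exponents $l,\,l/(l-1)$ (respectively $l/2,\,l/(l-2)$). This produces the clean data terms $\|\hat{f}^0\|^{*l}_{\theta;t}$, $\||\hat{g}^0|^2\|^{*l/2}_{\theta;t}$, $\||\hat{h}^0|^2\|^{*l/2}_{\theta;t}$, together with small multiples of $\|\hat{u}^{l-1}\|^{l/(l-1)}_{\theta;t}$ and $\|\hat{u}^{l-2}\|^{l/(l-2)}_{\theta;t}$, which are both controlled by $\|\hat{u}^{l}\|_{\theta;t}$ through H\"older's inequality.

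Second, I would prove the analog of Lemma \ref{sob}, namely $\hat{v}_t \geq \delta\|\hat{u}^l\|_{0;t}$, whose derivation is unchanged after the substitution $u-S' \mapsto \hat{u}$. Combining this with the continuous embedding \eqref{controltheta} applied to $|\hat{u}|^{l/2}$ (and noting that $|\hat{u}|^{l-2}|\nabla \hat{u}|^2$ is, up to a constant, $|\nabla|\hat{u}|^{l/2}|^2$), the residuals above and the term $l^2 c_1 \|\hat{u}^l\|_{1,1;t}$ inside $\hat{v}'_t$ can each be bounded by an arbitrarily small multiple of $\hat{v}_t$ plus the desired data quantities. Choosing the Young parameter small enough to absorb this into $\tau E\hat{v}_t$, and applying Gronwall's lemma in $t$ to handle the remaining time-integral of $\hat{v}_t$, closes the estimate.

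The main obstacle is the bookkeeping of constants in the absorption step: the Young parameters, the constant $\delta$ from the analog of Lemma \ref{sob}, the constant $c_1$ of \eqref{controltheta}, and the constants $c_1,c_2,c_3$ in the definition of $\hat{v}'$ must be calibrated so that the coefficient of $\hat{v}_t$ produced on the right-hand side stays strictly below $\tau$. The contraction assumption $\alpha + \tfrac{1}{2}\beta^2 + 72\beta^2 < \lambda$ is precisely what guarantees that $\tau$ is bounded away from zero uniformly in $l\geq 2$, so that the final constant $k_1(l,t)$ depends only on $l$, $t$ and the structure constants. This calibration is identical to the one carried out in Lemma 17 of \cite{DMS05} and transfers verbatim once the obvious substitutions are made.
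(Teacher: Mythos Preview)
Your proposal is correct and takes essentially the same approach as the paper: the paper's own proof consists solely of the sentence ``we can repeat line by line the proofs of Lemmas 15--17 in \cite{DMS05},'' and your outline is precisely a sketch of that repetition (domination $\tau E\hat{v}_t\le E\hat{v}'_t$, Young's inequality on the mixed products, the lower bound $\hat{v}_t\ge\delta\|\,|\hat{u}|^l\|_{0;t}$ analogous to Lemma~\ref{sob}, absorption via the contraction hypothesis, and Gronwall for the $\|\,|\hat{u}|^l\|_{1,1;t}$ term). Nothing is missing.
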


\begin{center}
\begin{minipage}[t]{7cm}
Laurent DENIS \\
Laboratoire d'Analyse et Probabilit\'es\\
 Universit{\'e} d'Evry Val
d'Essonne\\
23 Boulevard de France\\
 F-91037 Evry Cedex, FRANCE\\
 e-mail: ldenis{\char'100}univ-evry.fr
\end{minipage}
\hfill
\begin{minipage}[t]{7cm}
 Anis MATOUSSI \\
 LUNAM Université, Université du Maine\\
 Fédération de Recherche 2962 du CNRS\\
 Mathématiques des Pays de Loire \\
Laboratoire Manceau de Mathématiques\\
 Avenue Olivier Messiaen\\ F-72085 Le Mans Cedex 9, France \\
email : anis.matoussi@univ-lemans.fr\\
and \\
CMAP,  Ecole Polytechnique, Palaiseau
\end{minipage}
\hfill \vspace*{0.8cm}
\begin{minipage}[t]{12cm}
Jing ZHANG \\
Laboratoire d'Analyse et Probabilit\'es\\
 Universit{\'e} d'Evry Val
d'Essonne\\
23 Boulevard de France\\
 F-91037 Evry Cedex, FRANCE\\
Email: jing.zhang.etu@gmail.com

\end{minipage}
\end{center}


\begin{thebibliography}{99}
\addcontentsline{toc}{chapter}{Bibliographie}


\bibitem{Aronson3}  Aronson, D.G.: Non-negative solutions
of linear parabolic equations. \textit{Annali della Scuola Normale
Superiore di Pisa}, Classe di Scienze 3, tome 22 (4), pp. 607-694
(1968).

\bibitem{AronsonSerrin}  Aronson D.G. and Serrin J.: Local behavior of solutions of quasi-linear parabolic equations.
\textit{Archive for Rational Mechanics and Analysis}, 25,  81-122
(1967).

\bibitem{BCEF}  Bally V.,  Caballero E., El-Karoui N. and
Fernandez, B. : Reflected BSDE's PDE's and Variational Inequalities.
\textit{preprint INRIA report} (2004).


%
%
%
\bibitem{DenisStoica}  Denis L. and Sto\"{\i}ca L.:  A general analytical
result for non-linear s.p.d.e.'s and applications.
\textit{Electronic Journal of Probability},  9, p. 674-709 (2004).

\bibitem{DMS05}  Denis L., Matoussi A. and Sto\"{\i}ca L.:
$L^p$ estimates for the uniform norm of solutions of quasilinear
SPDE's. \textit{Probability Theory Related Fields}, 133, 437-463
(2005).

\bibitem{DMS08}  Denis L., Matoussi A. and Sto\"{\i}ca L.:
Maximum principle for parabolic SPDE's: first approach.
\textit{Stohcastic Partial Differential Equations and Applications
VIII, Levico, Jan.} 6-12 (2008).

\bibitem{DMS09}  Denis L., Matoussi A. and Sto\"{\i}ca L.:
Maximum Principle and Comparison Theorem for Quasi-linear Stochastic
PDE's. \textit{Electronic Journal of Probability}, 14, p. 500-530
(2009).

\bibitem{DM11} Denis L. and Matoussi A.: Maximum principle for quasilinear SPDE's on a bounded domain without regularity assumptions.
To appear in \textit{Stoch. Proc. and their Applications} (2013).

\bibitem{DMZ12} Denis L., Matoussi A. and Zhang J.: The Obstacle Problem for Quasilinear Stochastic PDEs: Analytical approach. To appear in
\textit{Annals of Probability} (2013).


\bibitem{DonatiPardoux} Donati-Martin C. and Pardoux
E.: White noise driven SPDEs with reflection. \textit{Probability
Theory and Related Fields}, 95, 1-24 (1993).

\bibitem{EKPPQ}  El Karoui N., Kapoudjian C., Pardoux E., Peng S., and Quenez M.C.: Reflected Solutions of
Backward SDE and Related Obstacle Problems for PDEs.  \textit{The
Annals of Probability},  25 (2), 702-737 (1997).



\bibitem{Klimsiak} Klimsiak T.: Reflected BSDEs and
obstacle problem for semilinear PDEs in divergence form.
\textit{Stochastic Processes and their Applications},  122 (1),
134-169 (2012).

\bibitem{Krylov}   Krylov, N. V. : An analytic approach to SPDEs. \textit{Six
Perspectives}, \textit{AMS Mathematical surveys an Monographs}, 64,
185-242 (1999).


\bibitem{LionsMagenes} Lions J.L. and Magenes
E.: Probl\`emes aux limites non homog\`enes et applications.
\textit{Dunod, Paris} (1968).
%

\bibitem{MatoussiStoica} Matoussi A. and Sto\"{\i}ca
L.: The Obstacle Problem for Quasilinear Stochastic PDE's. The
\textit{Annals of Probability}, 38, 3, 1143-1179 (2010).

\bibitem{MignotPuel} Mignot F. and Puel J.P. :  In\'equations d'\'evolution paraboliques avec convexes d\'ependant
du temps. Applications aux in\'equations quasi-variationnelles
d'\'evolution. \textit{Arch. for Rat. Mech. and Ana.},  64, No.1,
59-91 (1977).

\bibitem{NualartPardoux} Nualart D. and Pardoux
E.: White noise driven quasilinear SPDEs with reflection.
\textit{Probability Theory and Related Fields}, 93, 77-89 (1992).

\bibitem{Pierre} Pierre M.: Probl\`emes d'Evolution avec Contraintes Unilaterales et
Potentiels Parabolique. \textit{Comm. in Partial Differential
Equations}, 4(10), 1149-1197 (1979).

\bibitem{PIERRE} Pierre M. : Repr\'esentant Pr\'ecis d'Un Potentiel Parabolique. \textit{S\'eminaire
de Th\'eorie du Potentiel}, Paris, No.5, Lecture Notes in Math. 814,
186-228 (1980).


\bibitem{RevuzYor}  Revuz, D.  and  Yor, M. : Continuous Martingales and Brownian Motion.  \textit{Springer, third edition}, (1999).
%
%
%
%

\bibitem{SW} Sanz M. , Vuillermot P. : Equivalence and Hölder Sobolev regularity of solutions for a class of non-autonomous stochastic partial differential equations. \textit{Ann. I. H. Poincaré  PR} 39, (4) 703-742 (2003).

\bibitem{XuZhang} Xu T.G. and Zhang
T.S.: White noise driven SPDEs with reflection: Existence,
uniqueness and large deviation principles \textit{Stochatic
processes and their applications}, 119, 3453-3470 (2009).
\end{thebibliography}
\end{document}